\documentclass[11pt]{amsart}
\usepackage[letterpaper,top=3cm,bottom=2cm,left=3cm,right=3cm,marginparwidth=1.75cm]{geometry}

\usepackage[english]{babel}
\usepackage{amssymb, amsthm, amsfonts, amsmath}
\usepackage{cite}
\usepackage{empheq}
\usepackage{graphicx}
\usepackage{xcolor}
\usepackage{pagecolor}
\usepackage{tikz}

\definecolor{lime}{HTML}{A6CE39}
\DeclareRobustCommand{\orcidicon}{%
	\begin{tikzpicture}
		\draw[lime, fill=lime] (0,0) 
		circle [radius=0.16] 
		node[white] {{\fontfamily{qag}\selectfont \tiny ID}};
		\draw[white, fill=white] (-0.0625,0.095) 
		circle [radius=0.007];
	\end{tikzpicture}
	\hspace{-2mm}
}

\definecolor{myorange}{HTML}{FFA500}
\definecolor{myblue}{HTML}{5B9BD5}

\usepackage[colorlinks=true, allcolors=blue]{hyperref}

\usepackage{mathtools}
\mathtoolsset{showonlyrefs}

\numberwithin{equation}{section}

\newtheorem{theorem}{Theorem}
\newtheorem{corollary}[theorem]{Corollary}

\newtheorem{lemma}[theorem]{Lemma}
\newtheorem{proposition}[theorem]{Proposition}

\theoremstyle{remark}
\newtheorem{remark}[theorem]{Remark}

\numberwithin{equation}{section}
\numberwithin{theorem}{section}
\numberwithin{figure}{section}

\newcommand{\intT}{\int_0^1}

\newcommand{\IE}[1]{\intT e^{#1}} % {\rm d} x}
\newcommand{\I}[1]{\intT #1} %{\rm d} x}
\newcommand{\SI}[1]{\sum_{#1=0}^\infty}
\newcommand{\SIO}[1]{\sum_{#1=1}^\infty}

\newcommand{\T}{\mathbb{T}}
\newcommand{\Lo}{_{L^1}}
\newcommand{\Li}{_{L^\infty}}
\newcommand{\Lt}{_{L^2}}
\renewcommand{\Re}[1]{{\rm Re }(#1)}

\newcommand{\dy}{dy}

\newcommand{\N}{\mathbb{N}}
\newcommand{\R}{\mathbb{R}}

\newenvironment{AlignedEquation}{\begin{equation}\begin{aligned}}{\end{aligned}\end{equation}\ignorespacesafterend}

\newcommand{\qtext}[1]{\quad \text{#1} \quad}

\newcommand{\EL}{\mu}
\newcommand{\ELS}{\lambda}

\newcommand{\ENS}{\nu}
\newcommand{\FL}{f}
\newcommand{\FLS}{\psi}
\newcommand{\FNS}{\varphi}

\newcommand{\FR}[1]{R_{#1} \EL_{#1}}
\newcommand{\FRD}[1]{\overline R_{#1} \FL_{#1}}

\begin{document}

\title[Stability and bifurcation analysis in a mechanochemical model]{Stability and bifurcation analysis in a mechanochemical model of pattern formation}

\author[S. Cygan]{Szymon Cygan$^{1,3}$  \href{https://orcid.org/0000-0002-8601-829X}{\orcidicon}}
% \address[S. Cygan]{Institute for Mathematics, Heidelberg University, Germany\\
% Instytut Matematyczny, Uniwersytet Wroc\l{}awski, Poland \\ 
% \href{https://orcid.org/0000-0002-8601-829X}{orcid.org/0000-0002-8601-829X}
% }
% \email{szymon.cygan@uni-heidelberg.de}
% \urladdr {http://www.math.uni.wroc.pl/~scygan}

\author[A. Marciniak-Czochra]{Anna Marciniak-Czochra$^{1,2,*}$ \href{https://orcid.org/0000-0002-5831-6505}{\orcidicon}}
% \address[A. Marciniak-Czochra]{Institute for Mathematics and IWR, Heidelberg University, Germany\\ 
% \href{https://orcid.org/0000-0002-5831-6505}{orcid.org/0000-0002-5831-6505}
% }
% \email{anna.marciniak@iwr.uni-heidelberg.de}
% \urladdr{\href{https://biostruct.iwr.uni-heidelberg.de/folder_people/Anna.Marciniak/index.html}{https://biostruct.iwr.uni-heidelberg.de/folder\_people/Anna.Marciniak/index.html}}

\author[F. M\"unnich]{Finn M\"unnich$^1$ \href{https://orcid.org/0009-0007-9384-2002}{\orcidicon}}
% \address[F. M\"unnich]{Institute for Mathematics, Heidelberg University, Germany\\
% \href{https://orcid.org/0009-0007-9384-2002}{orcid.org/0009-0007-9384-2002}
% }
% \email{finn.muennich@stud.uni-heidelberg.de}

\author[D. Oelz]{Dietmar Oelz$^4$ \href{https://orcid.org/0000-0001-6981-4350}{\orcidicon}}

\thanks{$^1$ Institute for Mathematics, Heidelberg University, Germany \\
$^2$ Interdisciplinary Center for Scientific Computing (IWR), Heidelberg University, Germany\\
$^3$ Instytut Matematyczny, Uniwersytet Wroc\l{}awski, Poland \\
$^4$ School of Mathematics and Physics, University of Queensland, Australia\\
$^*$ Corresponding author; Email: anna.marciniak@iwr.uni-heidelberg.de\\
This work was supported by the German Research Foundation (DFG) under Germany’s Excellence Strategy (EXC 2181/1 – 390900948, Heidelberg STRUCTURES Excellence Cluster) and through the Collaborative Research Center 1324 (SFB 1324, Project B6). Additional funding was provided by the European Research Council (ERC) under the Synergy Grant (PEPS, No. 10107178), and by the German Academic Exchange Service (DAAD) within the Project-Based Personnel Exchange Programme (PPP, Project No. 57820507).
}

\begin{abstract}
We analyze the stability and bifurcation structure of steady states in a mechanochemical model of pattern formation in regenerating tissue spheroids. The model couples morphogen dynamics with tissue mechanics via a positive feedback loop: mechanical stretching enhances morphogen production, while morphogen concentration modulates tissue elasticity. Global strain conservation implements a nonlocal inhibitory effect, realizing a mechanochemical variant of the local activation–long-range inhibition mechanism. For exponential elasticity–morphogen coupling, the system admits a variational formulation. We prove existence of nonconstant steady states for small diffusion and uniqueness of the homogeneous state for large diffusion. Linear stability analysis shows that only unimodal patterns are stable, while multimodal solutions are unstable. Bifurcation analysis reveals subcritical and supercritical pitchforks, with fold bifurcations generating bistable regimes. Our results demonstrate that mechanochemical feedback provides a robust mechanism for single-peaked pattern formation without requiring a second diffusible inhibitor.
\end{abstract}

\maketitle

\section{Introduction}
\label{sec:Intro}
Self-organized pattern formation refers to the spontaneous emergence of spatially heterogeneous states from homogeneous equilibria in spatially extended systems governed by local interactions. In biological modeling, such phenomena are commonly described by coupled partial differential equations combining nonlinear reaction–diffusion dynamics with mechanical feedback, leading to symmetry breaking and spatial organization in developing or regenerating tissues. Despite extensive modeling efforts, a rigorous mathematical understanding of how nonlinear chemical kinetics interacting with mechanical couplings give rise to robust pattern selection remains incomplete \cite{Daphne2025,Halatek2018}. In particular, nonlocal mechanical constraints introduce qualitative changes in stability properties, mode selection, and bifurcation structure, posing significant analytical challenges for the theory of pattern formation.

Since Turing’s seminal proposal that diffusion-driven instabilities can generate spatial patterns \cite{turing1990chemical}, reaction-diffusion models have become a central theoretical framework for biological pattern formation \cite{doi:10.1126/science.1179047}. Canonical examples such as the Gierer-Meinhardt model \cite{GiererMeinhard1972} formalized the local activation--long-range inhibition (LALI) principle, showing how nonlinear interactions between an autocatalytic activator and a spatially extended inhibitory signal can give rise to stable patterns. In these classical Turing-type systems \cite{GiererMeinhardt2000, OsterMurray1989, Veerman2021}, pattern formation relies on at least two interacting morphogens with sufficiently distinct effective ranges, typically realized through differential diffusion. Despite their conceptual success, identifying concrete molecular implementations of Turing systems {\it in vivo} has remained notoriously difficult \cite{Marcon2012}. In particular, the nature of the long-range inhibitory signal is often unclear, raising the possibility that mechanisms beyond simple molecular diffusion may underlie observed patterns. Motivated by this, kernel-based formulations of pattern formation have been proposed, in which spatial interactions are encoded directly through nonlocal activation–inhibition kernels rather than diffusion operators \cite{KONDO2017120, Cygan2021}. Such models can reproduce the full repertoire of classical two-dimensional patterns, including spots, stripes, and networks, without explicitly invoking diffusive transport. More generally, it was demonstrated that reaction–diffusion systems with arbitrarily many components can be reduced to integro-differential descriptions characterized by effective kernels \cite{EI2021110496}, implying that distinct molecular mechanisms may generate identical spatial patterns if they share the same interaction structure. While powerful, this abstraction also lacks direct biological interpretation, as the cellular or molecular origins of the effective kernels are often not directly identifiable.

Growing experimental evidence indicates that mechanical forces are integral to biological organization across scales, challenging purely chemical views of pattern formation \cite{https://doi.org/10.15252/embr.202357739}. This has motivated mechanochemical frameworks in which tissue mechanics and biochemical signaling are dynamically coupled. These questions are often investigated using biologically motivated mathematical models, with patterning in the freshwater polyp {\it Hydra} providing a prominent example \cite{Vogg2019, NARAYANASWAMY2025204024}. Existing {\it Hydra} models range from classical reaction-diffusion descriptions of activation–inhibition dynamics \cite{GiererMeinhard1972, meinhardt2012modeling} to more recent formulations based on receptor-mediated interactions coupled to intracellular signaling \cite{doi:10.1142/S1793524524501547, Mercker2021.09.13.460125}. However, purely biochemical models appear insufficient to account for observed patterning behavior in {\it Hydra}. Both experimental and theoretical studies indicate that mechanical effects can act as effective long-range interactions and generate robust spatial organization \cite{Mercker2013, Hiscock2015, Ferenc2021}. Mechanochemical models naturally give rise to feedback loops between deformation and signaling \cite{Weevers2025}, suggesting that mechanical coupling can replace the classical diffusible inhibitor and provide an alternative realization of the LALI mechanism \cite{wang2023,Ferenc2021,BraunKeren2018}.

Several theoretical frameworks have incorporated mechanical effects into pattern formation models, ranging from simple mechanochemical feedbacks that generate coupled chemical–mechanical patterns \cite{Mercker2013, Brinkmann2018} to more elaborate continuum descriptions, such as biphasic poroelastic tissue models, in which mechanically induced flows enable pattern formation even for minimal biochemical kinetics \cite{doi:10.1073/pnas.1813255116}. Despite this progress, a systematic mathematical understanding of pattern selection and stability in mechanochemical systems remains incomplete. In contrast to classical Turing models, where mode selection and stability are well characterized, the influence of mechanical coupling on pattern multiplicity, robustness, and minimal pattern-forming requirements is still poorly understood.

In this work, we address these gaps by analyzing the stability and bifurcation structure of stationary solutions in a new mechanochemical model of symmetry breaking in {\it Hydra} aggregates, which was recently validated against experimental observations \cite{Weevers2025}. The model couples morphogen dynamics to tissue mechanics via a positive feedback mechanism: mechanical stretching enhances morphogen production, while morphogen concentration modulates tissue elasticity. Together with a global strain conservation constraint, this coupling yields a mechanochemical variant of the LALI paradigm that does not require a second diffusible inhibitor. We focus on an exponential elasticity–morphogen coupling, which endows the system with a variational structure amenable to rigorous analysis.

By deriving a reduced one-dimensional model that preserves the essential mechanochemical feedback while simplifying the geometric setting (Section \ref{sec:Deriv}, Appendix \ref{appsec:Deriv}), we obtain, for the first time, a framework amenable to rigorous analytical investigation of the dynamics. We establish the existence of spatially heterogeneous steady states for sufficiently small diffusion coefficients and prove uniqueness of the homogeneous equilibrium for large diffusion, deriving explicit bounds on the corresponding critical values by variational methods (Section \ref{sec:Existence_steady_states}). This analysis explains the mechanism of symmetry breaking, demonstrating how patterned states bifurcate from the homogeneous configuration.
We then investigate the pattern selection problem (Section \ref{sec:Stability}) and show that only unimodal stationary patterns are linearly stable, while all multimodal solutions are unstable, revealing a fundamental distinction from classical Turing systems and curvature-based mechanochemical models \cite{Veerman2021,Daphne2025}. Finally, we identify both subcritical and supercritical pitchfork bifurcations and reveal additional fold bifurcations responsible for the emergence of bistable regimes (Section \ref{sec:Bifurcation_analysis}). Throughout, the theoretical results are complemented by numerical simulations, elucidating the interplay between nonlocality, mechanical feedback, and nonlinear kinetics, and providing a mathematically grounded framework for experimental exploration.

\subsection*{Notation} We use the following notation: $\mathbb{{T}}$ is a one dimensional torus of size one, i.e., the unit interval $[0,1]$ in which we identify the endpoints with each another. Here 
$L^2(\T)$ denotes the set of square integrable functions on the circle defined via Fourier series
\begin{AlignedEquation}
    L^2(\T) = \left\lbrace u = \sum_{n=-\infty}^\infty \hat u(n) e^{2\pi i n x}: \sum_{n=-\infty}^\infty |\hat u(n)|^2 < \infty \right\rbrace
\end{AlignedEquation}
Similarly we denote Sobolev space  
\begin{AlignedEquation}
    W^{k,2}(\T) = \left\lbrace u \in L^2(\T): \sum_{n=-\infty}^\infty (1+n^2)^k |\hat u(n)|^2 < \infty \right\rbrace.
\end{AlignedEquation}
Constants are always denoted by a letter C even if they vary from line to line.

\section{Main results}
\label{sec:mainres}
We start by formulating  main theoretical results of this paper. We focus  the mechanochemical pattern formation model describing the concentration of the diffusive signaling molecules $u:= u(x,t)$, given by
\begin{AlignedEquation}
	\label{equ_varphi}
	\partial_t u(x,t) = D \, \partial_{xx} u(x,t) - u(x,t) + \kappa \frac{e^{u(x,t)}}{ \intT e^{u(y,t)} \, \dy}, \qquad  x \in [0,1], \quad t\in[0,\infty),
\end{AlignedEquation} 
with the coefficients $D>0$ and $\kappa >0$, supplemented with the periodic boundary conditions $u(0,t) = u(1,t)$ and $\partial_x u(0,t) = \partial_x u(1,t)$ for all $t\geqslant 0$. This model arises from a one-dimensional formulation of a hyperelastic material governed by the Saint-Venant–Kirchhoff constitutive law. A detailed derivation is provided in Section~\ref{sec:Deriv} and Appendix \ref{appsec:Deriv}. {Numerical simulations of the model (Figure~\ref{fig:numerics}) demonstrate its capacity to produce stable unimodal patterns.}
%\begin{figure}[h]
%    \centering
%\includegraphics[width=0.5\linewidth]{Graphs/dynamics_for_paper1.png}
 %   \caption{Numerical simulation of \eqref{equ_varphi} for $D=0.01$, $\kappa=1.5$ exhibits spontaneous pattern formation close to the constant steady state $U=\kappa$ and convergence to a nonconstant profile.}
 %   \label{fig:dynamics}
%\end{figure}

Global existence and uniqueness are classical results, established by standard arguments (see, e.g., \cite{miyasita2007dynamical}); we recall them here for completeness, 
\begin{proposition}[Global existence and uniqueness]
    \label{prop:ExistenceGlobal}
    For any initial datum $u_0 \in W^{1,2}(\mathbb T)$ problem~\eqref{equ_varphi} admits a unique global in time solution $u = u(x,t)$ satisfying
    \begin{AlignedEquation}
        u \in C\big([0,\infty), W^{1,2}(\T) \big), \quad u\in L^2\big([0,\infty), L^2(\T)\big), \quad u \in L^2\big((0,T), W^{2,2}(\T) \big) \quad \text{for each } T>0 \; .
    \end{AlignedEquation}
\end{proposition}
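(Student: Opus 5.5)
Our plan is the standard two-step argument: local well-posedness by a fixed point for the mild formulation, then global continuation from an a priori bound in $W^{1,2}(\T)$. We write \eqref{equ_varphi} as $\partial_t u = D\partial_{xx}u - u + F(u)$ with the nonlocal nonlinearity
\begin{equation}
F(u) = \kappa \frac{e^{u}}{\intT e^{u(y)}\,\dy},
\end{equation}
and look for mild solutions $u(t) = e^{t(D\partial_{xx}-1)}u_0 + \int_0^t e^{(t-s)(D\partial_{xx}-1)}F(u(s))\,ds$ in $C([0,T],W^{1,2}(\T))$.

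\textbf{Local existence.} Since $W^{1,2}(\T)\hookrightarrow L^\infty(\T)$ in one dimension, the map $u\mapsto e^u$ is locally Lipschitz from $W^{1,2}(\T)$ to $L^2(\T)$ (indeed to $W^{1,2}(\T)$). Jensen's inequality gives $\intT e^{u}\ge e^{\intT u}$, and $\intT e^u\ge e^{-\|u\|\Li}$, so the denominator stays away from zero on bounded sets. Hence $F$ is locally Lipschitz $W^{1,2}\to L^2$. The heat semigroup satisfies $\|e^{t(D\partial_{xx}-1)}f\|_{W^{1,2}}\le C(1+t^{-1/2})\|f\|\Lt$, which is integrable at $0$, so Banach's fixed point theorem on a small ball of $C([0,T],W^{1,2})$ yields a unique local solution. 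Maximal $L^2$-regularity for the heat equation with right-hand side $F(u)\in L^2((0,T),L^2)$ then gives $u\in L^2((0,T),W^{2,2}(\T))$. The blow-up alternative is standard: either $T_{\max}=\infty$, or $\|u(t)\|_{W^{1,2}}\to\infty$ as $t\to T_{\max}$.

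\textbf{A priori bounds.}
\begin{itemize}
\item[(i)] Integrating the equation gives $\frac{d}{dt}\intT u = -\intT u + \kappa$, so the mean of $u$ stays bounded.
\item[(ii)] By the maximum principle we get a lower bound. Since $F\ge0$, we have $u(x,t)\ge \min\{\min u_0,0\}$, i.e. $u\ge -C$.
\item[(iii)] The key step, and the main obstacle, is an upper bound preventing $e^u$ from concentrating. The structure is favourable because $F(u)$ always has $L^1$ norm exactly $\kappa$. We would use Duhamel's formula with the $L^1\to L^\infty$ bound for the heat kernel on $\T$. This gives $\|u(t)\|\Li \le \|u_0\|\Li + \kappa\int_0^t e^{-(t-s)}C(1+(D(t-s))^{-1/2})\,ds \le C$ uniformly in time, since $t^{-1/2}$ is integrable in one dimension.
\item[(iv)] Given $|u|\le C$, the denominator satisfies $\intT e^u\ge e^{-C}$ and $e^u\le e^C$, so $F(u)$ is bounded in $L^\infty$. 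Testing the equation with $-\partial_{xx}u$ yields
\begin{equation}
\frac12\frac{d}{dt}\|\partial_x u\|\Lt^2 + D\|\partial_{xx}u\|\Lt^2 + \|\partial_x u\|\Lt^2 \le \|F(u)\|\Lt\|\partial_{xx}u\|\Lt,
\end{equation}
which gives a uniform $W^{1,2}$ bound and a bound on $\int_0^T\|\partial_{xx}u\|\Lt^2$.
\end{itemize}

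\textbf{Conclusion.} These bounds rule out blow-up, so $T_{\max}=\infty$. The uniform $L^\infty$ and $L^2$ bounds give the stated regularity $u\in C([0,\infty),W^{1,2}(\T))$ and $u\in L^2((0,T),W^{2,2}(\T))$ for each $T>0$. As written, the statement $u\in L^2([0,\infty),L^2(\T))$ would require decay to zero, which is false since the mean tends to $\kappa$; we would interpret it as $u\in L^\infty([0,\infty),L^2(\T))$, which follows from (iii). Uniqueness follows from the contraction, together with the local Lipschitz property of $F$ and Gronwall's inequality.
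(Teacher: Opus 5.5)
Your proof is correct, and for the global step it takes a genuinely different route from the paper. The local theory is the same: mild formulation, and a contraction in $C([0,\tau],W^{1,2}(\T))$ via $W^{1,2}\hookrightarrow L^\infty$ and the $t^{-1/2}$ smoothing. For continuation, however, the paper never uses positivity or the $L^1$-normalisation of the nonlinearity. It uses the gradient-flow identity $\frac{d}{dt}\mathcal J(u(t))=-\|u_t\|_{L^2}^2$ and the coercivity of $\mathcal J$ to get $\frac D2\|u_x\|_{L^2}^2+\frac12\|u\|_{L^2}^2\le \mathcal J(u_0)+\kappa C\|u\|_{W^{1,2}}$, which gives a uniform $W^{1,2}$ bound in one stroke.

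Your route combines the identity $\|F(u)\|_{L^1}=\kappa$ (with $F\ge 0$), the one-dimensional $L^1\to L^\infty$ heat-kernel bound, and then an $H^1$ energy estimate. It does not need the variational structure at all. It works verbatim for $\kappa\, g(u)/\intT g(u)\,\dy$ with any positive locally Lipschitz $g$, i.e.\ for the general elastic modulus in \eqref{eq:BasicLong}, where no Lyapunov functional is available. It also gives an explicit absorbing ball, $\limsup_{t\to\infty}\|u(t)\|_{L^\infty}\le C\kappa(1+D^{-1/2})$. Your uniqueness argument (Gronwall on the mild formulation) is also more direct than the paper's appeal to continuous dependence in Henry's framework.

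What the Lyapunov route buys in return is the dissipation bound $\int_0^\infty\|u_t\|_{L^2}^2\,dt\le \mathcal J(u_0)-\inf\mathcal J<\infty$, and this is what the middle regularity claim is about. You are right that $u\in L^2([0,\infty),L^2(\T))$ is false as written. However, the paper's proof derives exactly $\int_0^t\|u_t\|_{L^2}^2\le C$ uniformly in $t$. So the intended statement is $\partial_t u\in L^2((0,\infty),L^2(\T))$, not your reading $u\in L^\infty([0,\infty),L^2(\T))$.

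Your estimates alone give only $\int_0^T\|u_t\|_{L^2}^2\le C(1+T)$. To recover the intended claim, add one step. The energy identity $\frac{d}{dt}\mathcal J(u(t))=-\|u_t\|_{L^2}^2$ is justified by your maximal-regularity bounds $u\in L^2((0,T),W^{2,2}(\T))$ and $u_t\in L^2((0,T),L^2(\T))$. Integrating it in time and using the lower bound on $\mathcal J$ from Lemma~\ref{thm:Coercive} gives the time-uniform estimate.
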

A complete proof is provided in Appendix for the reader’s convenience.

%\RemD{We should show graphs of such uni-modal stationary states for various values of $\kappa$ and $D$}

\begin{figure}[htbp]
\centering
\begin{minipage}{0.32\linewidth}
    \centering
    \includegraphics[width=\linewidth]{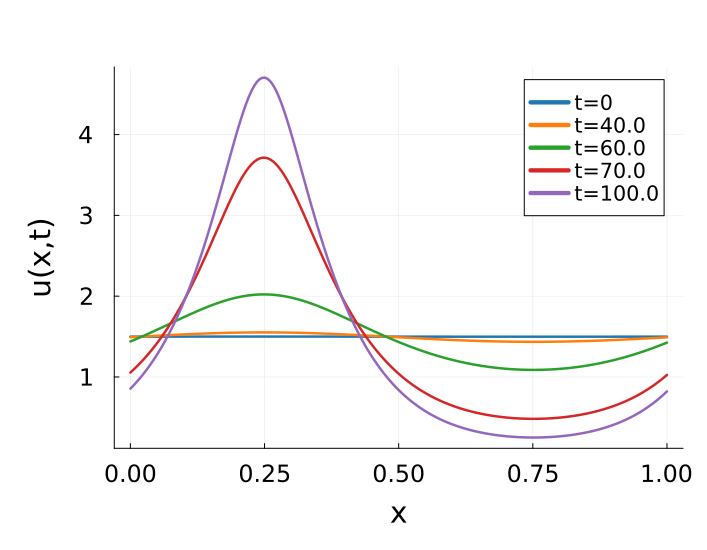}
\end{minipage}
\hfill
\begin{minipage}{0.32\linewidth}
    \centering
    \includegraphics[width=\linewidth]{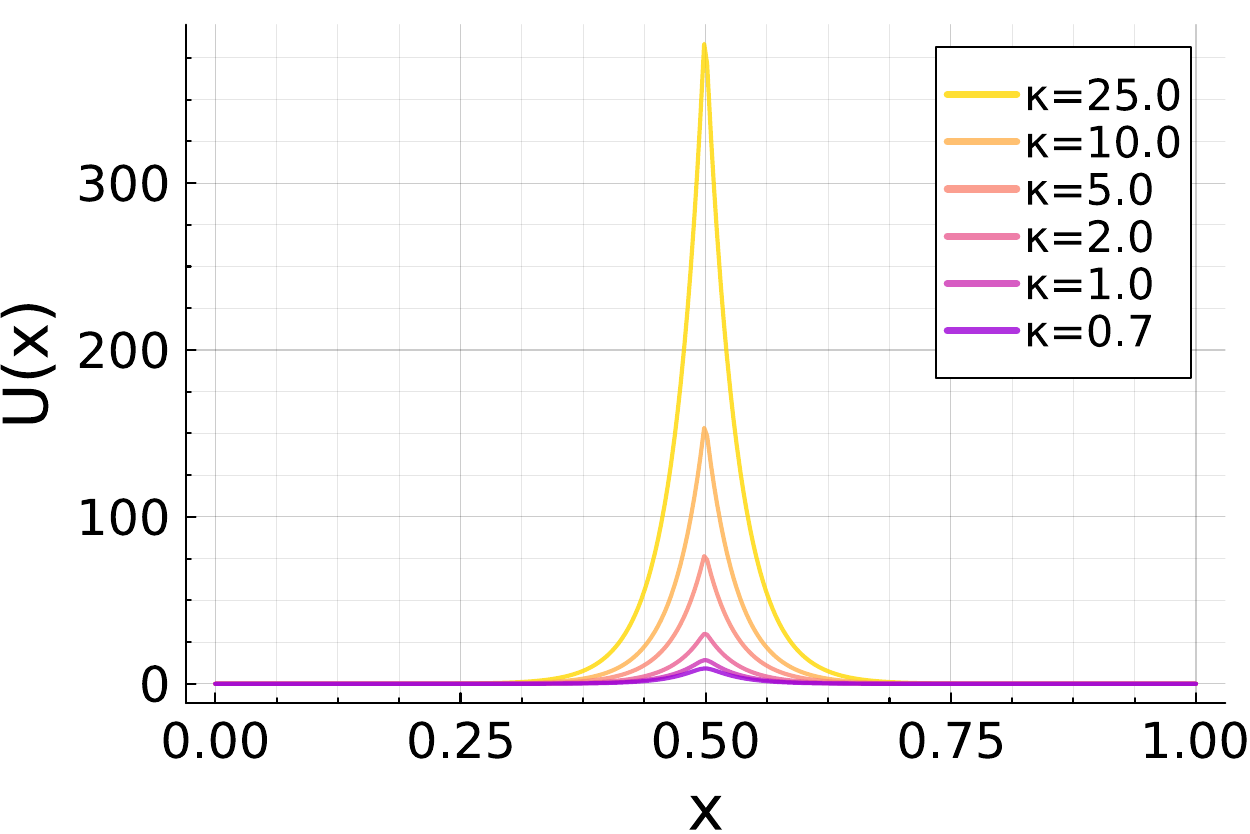}
\end{minipage}
\hfill
\begin{minipage}{0.32\linewidth}
    \centering
    \includegraphics[width=\linewidth]{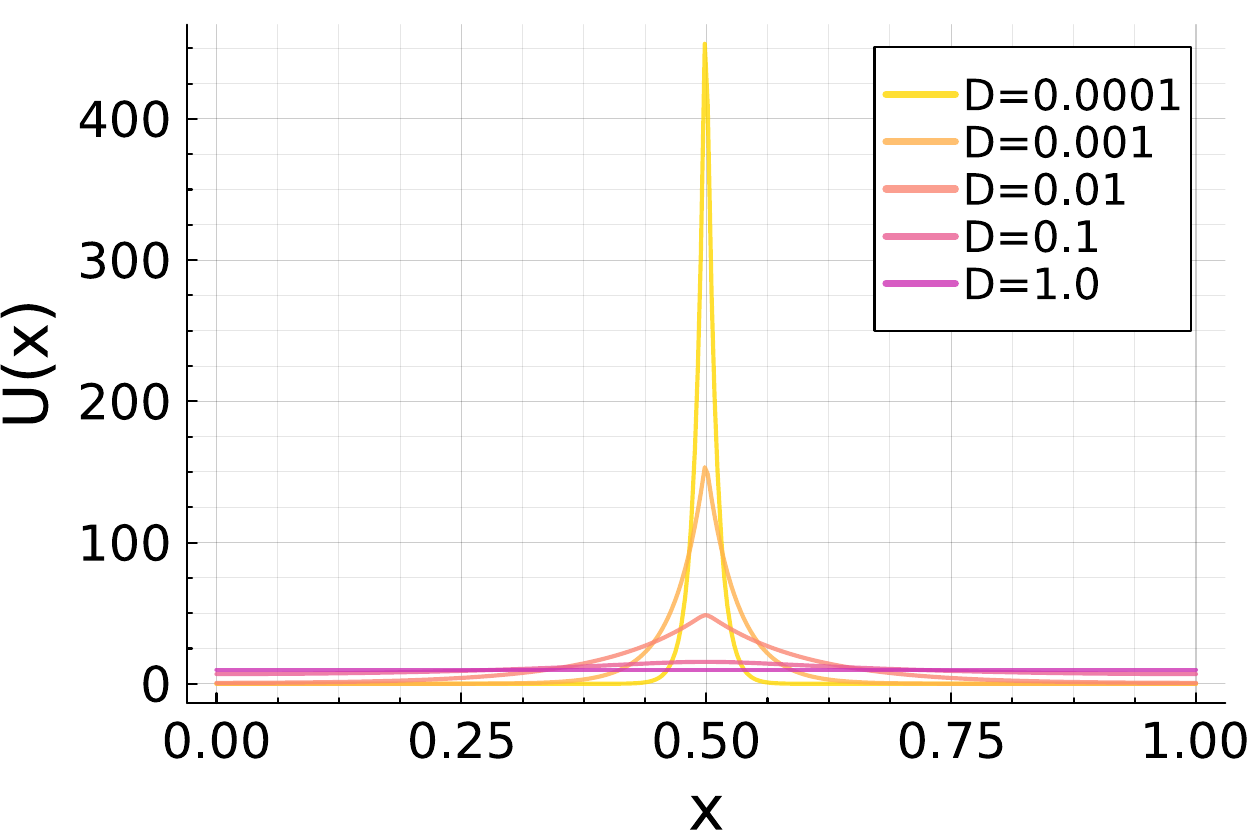}
\end{minipage}

\caption{
Numerical simulations of model \eqref{equ_varphi}. Left panel: Emergence of a uni-modal pattern from a small perturbation of the constant stationary solution $U=\kappa$, (example for $D=0.01$, $\kappa=1.5$). Middle panel: Nonconstant stationary solutions for fixed diffusion $D = 10^{-3}$ and varying~$\kappa$.
Right panel: Nonconstant stationary solutions for fixed $\kappa = 3.0$ and varying $D$. We observe that smaller diffusion leads to more concentrated patterns. 
}
\label{fig:numerics}
\end{figure}

%\begin{definition}[Weak solution]
%    \label{def:WeakSol}
  %  A function $U \in W^{1,2}(\T)$ is a weak solution to equation \eqref{equ_varphi_Stat} if for every $\varphi \in W^{1,2}(\T)$ there holds
  %  \begin{AlignedEquation}
   %     \label{eq:WeakSol}
 %       0 = -D\intT U_x \varphi_x dx -\intT U \varphi \, dx + \kappa \, \frac{\intT e^U \varphi \, dx}{\intT e^U \dy }. 
 %   \end{AlignedEquation}
%\end{definition}
We now characterize stationary solutions $u(x,t) = U(x)$ of \eqref{equ_varphi} (see Figure~\ref{fig:numerics}), which satisfy 
\begin{AlignedEquation}
	\label{equ_varphi_Stat}
	0 = D \,   U_{xx} - U + \kappa \frac{e^U}{ \intT e^{U}\dy}  \; ,
\end{AlignedEquation}
supplemented with the periodic boundary conditions $U(0) = U(1)$ and $U_x(0) = U_x(1)$. 
\begin{remark}[Mass conservation of stationary structures]
    \label{Rem:Cst}
   Integration of \eqref{equ_varphi} over $\mathbb{T}$ yields the mass evolution equation
\begin{AlignedEquation}
    \label{eq:MassDynamic}
    \frac{d}{dt} \intT u \, dx=\kappa - \intT u \, dx \; , 
\end{AlignedEquation}
implying $\int_\mathbb{T} u \, dx \to \kappa$ as $t \to \infty$. Thus, any stationary solution $U(x)$ satisfies $\int_\mathbb{T} U(x) \, dx = \kappa$. 
\end{remark}
\begin{theorem}[Existence of stationary solutions]    \label{thm:ExistenceStationary}
    For each $\kappa >0$ there exist two constants $D_{min}(\kappa)$ and $D_{max}(\kappa)$ satisfying  
    \begin{AlignedEquation}
       0 < D_{min}(\kappa) <  D_{max}(\kappa),
    \end{AlignedEquation}
    such that:
    \begin{itemize}
        \item If $0<D<D_{min}$, then problem \eqref{equ_varphi_Stat} admits a
        %weak
        nonconstant solution, which is Lyapunov stable with respect to $L^2$-norm for perturbations in $W^{1,2}(\T)$.
        \item If $D>D_{max}$, the constant function $\overline U \equiv \kappa$ is the unique solution to equation \eqref{equ_varphi_Stat} and globally asymptotically stable with respect to $L^2$-norm for perturbations in $W^{1,2}(\T)$.
    \end{itemize}
\end{theorem}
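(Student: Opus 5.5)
The plan is to exploit the variational (gradient-flow) structure of \eqref{equ_varphi}. Introduce the energy
\begin{equation}
E(u) = \frac{D}{2}\intT u_x^2\,dx + \frac12\intT u^2\,dx - \kappa \log\Big(\intT e^{u}\,dx\Big),
\end{equation}
defined on $W^{1,2}(\T)$. A direct computation of the first variation shows that \eqref{equ_varphi} reads $\partial_t u = -E'(u)$. Hence, along the solutions given by Proposition \ref{prop:ExistenceGlobal}, we have $\frac{d}{dt}E(u) = -\|\partial_t u\|_{L^2}^2 \le 0$. The stationary solutions \eqref{equ_varphi_Stat} are exactly the critical points of $E$. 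The energy is bounded below, since $\log\intT e^u \le \log\intT e^{|u|}$ and, in one dimension, the Trudinger--Moser/Sobolev embedding controls $\intT e^{|u|}$ by $\exp(C\|u\|_{W^{1,2}})$. Combined with the quadratic terms, this makes $E$ coercive on $W^{1,2}(\T)$. By weak lower semicontinuity of the quadratic part and compactness of $W^{1,2}\hookrightarrow C(\T)$ for the nonlinear part, $E$ attains its global minimum.

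\emph{Small $D$.} I would show that for $D<D_{min}(\kappa)$ the constant $\overline U\equiv\kappa$ is not the minimizer, so the minimizer is nonconstant. Two options are available. The first is to compute the second variation at $\overline U$: for $h$ with mean zero, $E''(\kappa)[h,h] = \intT (D h_x^2 + h^2)\,dx - \kappa\intT h^2\,dx$, which is negative for $h=\cos(2\pi x)$ when $4\pi^2 D + 1 < \kappa$. This only works for $\kappa>1$. For general $\kappa>0$ I would therefore use the second option, an explicit concentrated test function. Take $u_\varepsilon = \kappa + a\,\phi_\varepsilon$, a bump of width $\varepsilon$ and height $a$ centered at a point with the mean adjusted. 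Then compare $E(u_\varepsilon)$ with $E(\kappa)$. The gradient term costs $\sim D a^2/\varepsilon$, while $-\kappa\log\intT e^{u}$ gains roughly $-\kappa(a+\log\varepsilon)$. Choosing $a$ large and then $\varepsilon$ suitably, and finally $D$ small, gives $E(u_\varepsilon)<E(\kappa)$; this yields an explicit $D_{min}(\kappa)$. The minimizer $U$ then solves \eqref{equ_varphi_Stat} and is regular by bootstrapping. For Lyapunov stability I would use the standard argument that $E$ is a Lyapunov functional. If the set of global minimizers is treated (it is a translation family, so stability should be understood as for the orbit, or for an isolated minimum modulo translations), sublevel sets of $E$ near the minimum are small in $L^2$. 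Since $E$ decreases along trajectories, and $E$ is continuous on $W^{1,2}$, small $W^{1,2}$ perturbations remain $L^2$-close. The main obstacle here is isolatedness of the minimum, given translation invariance; I would state and prove stability of the minimizing set, using compactness of sublevel sets in $L^2$.

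\emph{Large $D$.} Let $U$ be a solution and write $U=\kappa+w$ with $\intT w=0$ (Remark \ref{Rem:Cst}). Set $g(U)=\kappa e^U/\intT e^U$. Testing \eqref{equ_varphi_Stat} with $-U_{xx}$ gives $D\|U_{xx}\|^2 + \|U_x\|^2 = \intT g(U)_x U_x = \intT g(U)\,U_x^2 \le \kappa\,\frac{e^{\max U}}{\intT e^U}\|U_x\|^2$. An a priori bound is also needed. Testing with $U$ gives $D\|U_x\|^2 + \|U\|^2 = \kappa\intT U e^U/\intT e^U \le \kappa\max U$. Since $\max U - \min U \le \|U_x\|_{L^1}\le\|U_x\|_{L^2}$, this yields $\|U_x\|_{L^2}\le C(\kappa)/\sqrt D$ after absorbing. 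Hence $\frac{e^{\max U}}{\intT e^U}\le e^{\mathrm{osc}\,U}\le e^{C/\sqrt D}$. Using Poincaré, $\|U_{xx}\|\ge 2\pi\|U_x\|$, we obtain $(4\pi^2D + 1 - \kappa e^{C/\sqrt D})\|U_x\|^2\le 0$, so $U_x=0$ once $D>D_{max}$. This gives uniqueness, and $D_{max}>D_{min}$ can be arranged by enlarging. For global asymptotic stability, I would run the same estimate on the time-dependent problem, i.e. differentiate $\|u_x\|^2$ in time. An a priori bound $\|u_x(t)\|\le C$ comes from the energy inequality, which gives $E(u(t))\le E(u_0)$, together with coercivity; this $C$ depends on $u_0$, however. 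I therefore expect the main obstacle to be obtaining a uniform-in-data bound. The approach I would try first is LaSalle's invariance principle: the $\omega$-limit set is contained in the set of stationary solutions (precompactness follows from the energy bound and parabolic smoothing). By uniqueness, this set is $\{\kappa\}$, so $u(t)\to\kappa$ in $L^2$. Lyapunov stability follows from the spectral gap, since $E''(\kappa)$ is positive definite when $4\pi^2D+1>\kappa$.
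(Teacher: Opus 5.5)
Your proposal is sound, with one caveat about stability discussed at the end.

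\textbf{Small $D$.} This part is essentially the paper's argument. Existence of a minimizer comes from coercivity and weak lower semicontinuity. When $\kappa>1$, the second variation at $\overline U$ in the direction $\cos(2\pi x)$ is negative. For general $\kappa$, a concentrated competitor beats $\overline U$. The paper's competitor is the truncated series $\varphi_N=\kappa+\sum_{k=1}^N\frac{2\kappa}{1+D\mu_k}\cos(2\pi kx)$, a peak of width of order $1/N$ at $x=0$, which plays the role of your bump.

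Your bookkeeping omits one term. For $u_\varepsilon=\kappa+w$ with $\intT w=0$ one has $\mathcal J(u_\varepsilon)-\mathcal J(\overline U)=\frac D2\|w_x\|_{L^2}^2+\frac12\|w\|_{L^2}^2-\kappa\log\intT e^{w}$. The middle term is of order $a^2\varepsilon$, and it defeats the gain $\kappa a$ unless $\varepsilon\lesssim\kappa/a$. Taking $\varepsilon=\kappa/a$, the net gain is $\frac{\kappa a}{2}-\kappa\log(Ca/\kappa)-O(\kappa^2)$ against a gradient cost of order $Da^3/\kappa$. So the construction does work for every $\kappa>0$, but $\varepsilon$ must be tied to $a$ in this way.

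\textbf{Large $D$.} Here your route is genuinely different. The paper proves that $\mathcal J$ is globally convex. It bounds the entries of the Fourier Hessian and shows, via a Leibniz-expansion estimate and Sylvester's criterion, that every finite section is positive definite once $\sum_k(1+D\mu_k-2\kappa)^{-1}<(8\kappa)^{-1}$. It then passes to the limit and combines convexity of the set of minimizers with strict local minimality of $\overline U$.

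You work only with equilibria. Testing with $U$ and using $\intT U=\kappa$ gives $\operatorname{osc}U\le\|U_x\|_{L^2}\lesssim 1/\sqrt D$; in fact $\operatorname{osc}U\le\kappa/D$, since $\|U\|_{L^2}^2\ge\kappa^2$. Testing with $-U_{xx}$ and applying Wirtinger then forces $U_x\equiv 0$ as soon as $4\pi^2D+1>\kappa e^{\operatorname{osc}U}$, for instance when $4\pi^2D+1>\kappa e^{\kappa/D}$. Your route is shorter, avoids the infinite determinant expansion, and gives an explicit threshold. The paper's route buys a structural statement about the energy at every $\varphi$, not only at critical points.

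\textbf{Dynamical claims.} You are more careful here than the paper, which derives both stability statements in one line from minimality. Your LaSalle argument for global attraction runs trajectory by trajectory. Each orbit is precompact from its own energy bound, so the uniform-in-data bound you worry about is not needed.

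For the nonconstant minimizer, your sublevel-set argument rigorously gives stability of the compact set of global minimizers. It does not give stability of $U$ itself, which is non-isolated because of translation invariance. To obtain the statement as written you need an extra ingredient, such as the \L{}ojasiewicz--Simon inequality. It is available here because $\mathcal J$ is real analytic on $W^{1,2}(\T)$, and it yields Lyapunov stability of non-isolated local minimizers. The paper's appeal to local minimality does not address this point either.
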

The proof, which employs variational methods, is presented in Section~\ref{sec:Existence_steady_states}.
We emphasize that the analytically derived bounds $D_{\min}$ and $D_{\max}$ are not optimal, as they arise from relatively coarse estimates; their refinement by numerical simulations is presented at the end of the section.

We classify nonconstant stationary solutions according to their spatial periodicity: an $m$-modal solution has $m$ peaks per period. In particular, solutions with $m=1$ are called {unimodal}, while those with $m=2,3,\dots$ are {multimodal}. By rescaling $x \mapsto m x$, a unimodal solution $U$ of equation~ \eqref{equ_varphi_Stat} yields an m-modal solution $\tilde U(x)=U(mx-\lfloor mx \rfloor)$ of 
%\RemD{I think that here, $U$ is the multi-modal solution and what is currently called $U_m$ is the uni-modal one, please check!}
\begin{AlignedEquation}
    \label{eq:StationaryM}
    0 = \frac{D}{m^2} \tilde U_{xx} -  \tilde U + \kappa\frac{e^{\tilde U}}{\I e^{\tilde U} \dy } \; ,
\end{AlignedEquation}
with effective diffusivity $D/m^2$.

To investigate the stability properties of stationary solutions of \eqref{equ_varphi}, we consider perturbations of the form
\[
u(x,t) = U(x) + e^{\lambda t} \varphi(x),
\]
where $U$ is a stationary solution. Substituting this ansatz into \eqref{equ_varphi} leads to the following nonlocal eigenvalue problem:
\begin{AlignedEquation}
    \label{eq:EigProblem}
    D \varphi_{xx} + \left(\kappa\frac{e^U}{\intT e^U \dy} - 1 - \ENS\right) \varphi = \kappa \frac{e^U}{\left(\intT e^U \dy\right)^2} \intT e^U \varphi \, \dy.
\end{AlignedEquation}
We denote by $\ENS$ an eigenvalue of the nonlocal problem and compare it below with the eigenvalues $\ELS$ of the corresponding local problem. The following results describe the linear stability of nonconstant stationary solutions.
\begin{theorem}[Type of stationary solutions]
    \label{thm:StabilityNCst}
    The stationary solutions are classified as follows:
    \begin{itemize}
        \item The stable nonconstant solution obtained in Theorem~\ref{thm:ExistenceStationary} is unimodal.
        \item All multimodal solutions ($m$-modal with $m \geq 2$) are linearly and nonlinearly unstable in $L^2(\mathbb{T})$.
    \end{itemize}
\end{theorem}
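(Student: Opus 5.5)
The plan is to exploit the variational structure of \eqref{equ_varphi}. Equation \eqref{equ_varphi} is the $L^2$-gradient flow of the functional
\begin{equation}
E(u)=\intT \Big(\tfrac D2 u_x^2+\tfrac12 u^2\Big)\,dx-\kappa\log\intT e^{u}\,dx ,
\end{equation}
and stationary solutions of \eqref{equ_varphi_Stat} are its critical points. The linearization around a stationary $U$ is the operator
\begin{equation}
\mathcal L\varphi = L\varphi-\kappa\frac{e^U}{(\intT e^U\dy)^2}\intT e^U\varphi\,\dy,\qquad L\varphi=D\varphi_{xx}+\Big(\kappa\frac{e^U}{\intT e^U\dy}-1\Big)\varphi .
\end{equation}
$L$ is a periodic Sturm--Liouville operator. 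The nonlocal term has the symmetric kernel $e^{U(x)}e^{U(y)}$, so $\mathcal L$ is self-adjoint on $L^2(\T)$ with compact resolvent. Its eigenvalues $\ENS$ are given by min--max applied to the quadratic form
\begin{equation}
Q(\varphi)=\langle L\varphi,\varphi\rangle-\kappa\frac{\big(\intT e^U\varphi\big)^2}{\big(\intT e^U\big)^2}.
\end{equation}
Thus $\mathcal L$ is $L$ minus a rank-one nonnegative perturbation.

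\textbf{Step 1: structure of nonconstant solutions.} Set $c=\intT e^U\dy$ and view \eqref{equ_varphi_Stat} as the conservative ODE $DU''=U-\kappa e^U/c$. This ODE has a first integral, and its nonconstant periodic orbits surround the centre equilibrium. On such an orbit $U$ has exactly one maximum and one minimum per minimal period, and it is even about its extrema. Consequently every nonconstant $1$-periodic solution has minimal period $1/m$ for some $m\ge1$, i.e. it is $m$-modal. Moreover, $U_x$ has exactly $2m$ simple zeros on $\T$.

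\textbf{Step 2: the local operator.} Differentiating \eqref{equ_varphi_Stat} in $x$ gives $LU_x=0$, since the nonlocal factor $c$ is constant in $x$. So $0$ is an eigenvalue of $L$ with eigenfunction $U_x$, which has $2m$ zeros. By periodic Sturm oscillation theory, the ordered eigenvalues satisfy
\begin{equation}
\ELS_0>\ELS_1\ge\ELS_2>\ELS_3\ge\ELS_4>\cdots,
\end{equation}
and eigenfunctions for $\ELS_{2k-1},\ELS_{2k}$ have exactly $2k$ zeros. Hence $0\in\{\ELS_{2m-1},\ELS_{2m}\}$, and $L$ has at least $2m-1$ positive eigenvalues. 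For $m\ge2$ this gives a subspace $V$ of dimension at least $3$ on which $\langle L\varphi,\varphi\rangle>0$ for $\varphi\ne0$.

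\textbf{Step 3: linear instability of multimodal solutions.} Intersect $V$ with the hyperplane $\{\intT e^U\varphi=0\}$. The result has dimension at least $2$, and on it $Q(\varphi)=\langle L\varphi,\varphi\rangle>0$. By min--max, $\mathcal L$ has at least two positive eigenvalues $\ENS>0$, so every $m$-modal solution with $m\ge2$ is linearly unstable.

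\textbf{Step 4: nonlinear instability.} The nonlinearity $u\mapsto\kappa e^u/\intT e^u$ is smooth (indeed $C^2$) on $W^{1,2}(\T)\hookrightarrow L^\infty$. I would therefore invoke the standard principle that linearized instability implies nonlinear instability for semilinear parabolic equations (Henry's theorem). This needs the local well-posedness of Proposition~\ref{prop:ExistenceGlobal} together with a spectral gap of $\mathcal L$. Instability in $W^{1,2}$ must then be upgraded to instability in $L^2$. I expect this to be the main technical obstacle. My first attempt would use the growing eigenfunction itself: starting from $U+\varepsilon\varphi_\ENS$, the $L^2$-projection onto $\varphi_\ENS$ grows like $\varepsilon e^{\ENS t}$ until it reaches a fixed size. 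The remainder would be controlled in $W^{1,2}$ by a Duhamel estimate with quadratic nonlinear error, which gives an $L^2$ lower bound directly.

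\textbf{Step 5: the stable solution is unimodal.} The nonconstant solution of Theorem~\ref{thm:ExistenceStationary} is Lyapunov stable in $L^2$ for $W^{1,2}$ perturbations; it is obtained as a constrained minimizer of $E$. By Step 1 it is $m$-modal for some $m$. If $m\ge2$, Step 4 would make it unstable, a contradiction. Alternatively, minimality gives $Q\le0$ directly, contradicting Step 3. Hence $m=1$.

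A secondary point to check carefully is the Sturm counting in Step 2 when $\ELS_{2m-1}=\ELS_{2m}=0$ is a double eigenvalue. This degeneracy only increases the count of positive eigenvalues, so it does not affect the argument.
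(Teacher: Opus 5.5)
Your proposal is correct. It shares the paper's skeleton: $U_x$ is a zero mode of the local operator, periodic Sturm oscillation forces several positive local eigenvalues, the nonpositive rank-one nonlocal term cannot remove them all, and the variational structure excludes a multimodal minimizer. The key lemmas, however, differ.

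\begin{itemize}
\item \emph{Local to nonlocal spectrum.} The paper derives the secular equation $1/M=\sum_n\beta_n^2/(\nu-\lambda_n)$ (Lemma~\ref{lem:equation_for_lambda}) and proves interlacing (Theorem~\ref{thm:Charakt_lambda}). It then does a case analysis on whether $\lambda_0,\lambda_1,\lambda_2$ lie in $\Lambda_0$ or $\Lambda_\beta$ (Theorem~\ref{thm:Sufficient_conds_instability}). Your min--max argument on the hyperplane $\{\int_0^1 e^U\varphi=0\}$ does this in two lines and handles multiplicities automatically. It even gives at least $2m-2$ positive eigenvalues of $\mathcal L$. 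The paper's route, in exchange, yields a full description of the nonlocal spectrum, which it reuses, e.g.\ for the constant state.

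\item \emph{Nonlinear instability.} The paper (Lemma~\ref{lem:linear_implies_nonlinear_instab}) appeals to the principle that stable equilibria of the $L^2$ gradient flow are local minima of $\mathcal J$. You instead use Henry's linearized-instability theorem. The $W^{1,2}\to L^2$ upgrade you expect to be the main obstacle is harmless once the argument tracks the unstable component. The unstable eigenspace is finite-dimensional and its spectral projection is $L^2$-orthogonal, so growth of that component bounds $\|u-U\|_{L^2}$ from below.

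\item \emph{Step~1.} Your phase-plane step makes explicit something the paper uses only implicitly when it deduces ``stable $\Rightarrow$ unimodal''. Every nonconstant steady state is exactly $m$-modal, with $2m$ simple zeros of $U_x$; the paper only claims at least $2m-1$.

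\item \emph{Second-variation argument.} At a minimizer, $D^2\mathcal J(U)(\psi,\psi)=-Q(\psi)\ge 0$, which contradicts Step~3. This proves the first bullet without any nonlinear dynamics.
\end{itemize}

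Two small corrections. First, the solution from Theorem~\ref{thm:ExistenceStationary} is an \emph{unconstrained} global minimizer of $\mathcal J$ on $W^{1,2}(\mathbb T)$, not a constrained one. That is exactly what gives $Q\le 0$ on the whole space. Second, if you carry out the Duhamel bootstrap of Step~4 by hand, seed the perturbation along the eigenfunction of the \emph{largest} eigenvalue $\nu_0$. Otherwise the linear semigroup can amplify the quadratic error at rate $e^{\nu_0 t}$, faster than your seeded mode grows.
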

The proof, based on the gradient flow structure of \eqref{equ_varphi} and Sturm–Liouville theory, is presented in detail in Section~\ref{sec:Stability}. 

\begin{corollary}[Existence of multimodal solutions]
    \label{thm:ExistenceStationaryMultimodal}
    Combining Theorems~\ref{thm:ExistenceStationary} and \ref{thm:StabilityNCst} with equation~\eqref{eq:StationaryM}, we obtain:
    \begin{itemize}
        \item If $D < D_{\min}/m^2$, then an $m$-modal solution to \eqref{equ_varphi_Stat} exists.
        \item If $D > D_{\max}/m^2$, then no $m$-modal weak solutions exist.
    \end{itemize}
\end{corollary}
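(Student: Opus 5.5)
The corollary follows by rescaling. I will pass between solutions of \eqref{equ_varphi_Stat} and solutions of the same equation with diffusivity $m^2D$, and then apply Theorem~\ref{thm:ExistenceStationary} with that diffusivity. The precise claim is the following. If $V$ solves \eqref{equ_varphi_Stat} with diffusion coefficient $m^2 D$, then
\begin{AlignedEquation}
\tilde U(x) = V(mx - \lfloor mx \rfloor)
\end{AlignedEquation}
solves \eqref{equ_varphi_Stat} with diffusion coefficient $D$. Conversely, every $1/m$-periodic solution $\tilde U$ with diffusion $D$ arises in this way, with $V(y) = \tilde U(y/m)$.

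The rescaling check has two parts.

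\emph{Periodic gluing.} Since $V$ satisfies $V(0)=V(1)$ and $V_x(0)=V_x(1)$, the glued function $\tilde U$ is $C^1$ on $\T$. Because it satisfies the ODE on each cell, it is in fact $C^2$.

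\emph{The nonlocal term.} Using periodicity, I compute
\begin{AlignedEquation}
\intT e^{\tilde U}\,dy = \sum_{k=0}^{m-1}\int_{k/m}^{(k+1)/m} e^{V(my-k)}\,dy = \intT e^{V}\,dy.
\end{AlignedEquation}
The chain rule gives $\tilde U_{xx}(x) = m^2 V_{yy}(mx)$. Therefore
\begin{AlignedEquation}
D\tilde U_{xx} - \tilde U + \kappa \frac{e^{\tilde U}}{\intT e^{\tilde U}\,dy} = \Bigl( m^2 D\, V_{yy} - V + \kappa \frac{e^{V}}{\intT e^{V}\,dy} \Bigr)(mx) = 0,
\end{AlignedEquation}
which is \eqref{eq:StationaryM} read with $m^2 D$ in place of $D$. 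The converse is the same computation run backwards, since a $1/m$-periodic $\tilde U$ restricts to a periodic function on each cell.

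\textbf{First bullet.} Suppose $D < D_{\min}/m^2$, so that $m^2 D < D_{\min}$. Theorem~\ref{thm:ExistenceStationary} then gives a nonconstant solution $V$ at diffusivity $m^2D$. By Theorem~\ref{thm:StabilityNCst}, $V$ is unimodal. Its rescaling $\tilde U$ therefore has exactly $m$ peaks per period, and it is nonconstant because $V$ is.

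\textbf{Second bullet.} Suppose $D > D_{\max}/m^2$ and that an $m$-modal solution $\tilde U$ exists. Being $m$-modal, $\tilde U$ is $1/m$-periodic and nonconstant. The converse direction then yields a nonconstant solution $V$ at diffusivity $m^2D > D_{\max}$. This contradicts the uniqueness part of Theorem~\ref{thm:ExistenceStationary}.

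\textbf{Expected obstacles.} The one point needing care is the meaning of ``$m$-modal'' and ``weak solution''.
\begin{itemize}
\item I will take $m$-modal to mean a nonconstant solution obtained by the rescaling $x\mapsto mx$ from a solution of the unit-cell problem, equivalently a nonconstant $1/m$-periodic solution. I read the phrase ``$m$ peaks per period'' in this sense.
\item A weak $W^{1,2}$ solution is a classical solution by elliptic bootstrapping. Indeed, $e^{U}$ is bounded because $W^{1,2}(\T)\subset C(\T)$, so $U_{xx}\in C(\T)$. The rescaling argument above therefore applies to weak solutions without change.
\end{itemize}
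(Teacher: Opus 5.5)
Your proof is correct and follows the paper's route: the paper states the corollary as an immediate consequence of Theorems~\ref{thm:ExistenceStationary} and \ref{thm:StabilityNCst} via the rescaled equation \eqref{eq:StationaryM}. You have simply written out the details the paper leaves implicit, namely the invariance of $\intT e^{U}\,dy$ under $x\mapsto mx$, the regularity of the glued function, and the converse direction needed for nonexistence. Your identification of ``$m$ peaks per period'' with ``nonconstant and $1/m$-periodic'' matches the paper's own definition via rescaling. It is also justified directly: for fixed $\kappa/\intT e^{U}\,dy$ the stationary equation is an autonomous second-order ODE, so a nonconstant periodic solution traverses a single closed orbit an integer number of times.
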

    This result highlights a fundamental principle: stable pattern formation in our mechanochemical framework necessarily yields unimodal structures, whereas all multimodal configurations are intrinsically unstable. This represents a significant departure from classical Turing models, in which the number of peaks corresponds to the unstable eigenmodes and can vary with domain size~\cite{Murray}. Moreover, pattern selection in our model differs from other mechanochemical frameworks that permit multiple, or even arbitrarily many, peaks, see, e.g., \cite{Daphne2025, Mercker2013, Brinkmann2018}.

% The transition from spatially homogeneous to patterned states represents a fundamental mechanism in mechanochemical pattern formation. We conduct a comprehensive bifurcation analysis to characterize how nonconstant steady states emerge from constant solutions as system parameters vary, revealing the precise conditions under which spatial symmetry breaking occurs. This analysis determines both the existence and stability properties of patterned states, which directly influence the robustness of biological pattern formation. Our investigation employs the Crandall-Rabinowitz theorem to provide local and global characterizations of bifurcating solution branches.

We complete the analysis by characterizing the emergence of patterned states through bifurcations from the homogeneous steady state. This approach provides insight into emergence of patterns, which is the main focus when applying the model to biological systems. 
\begin{theorem}[Bifurcation structure]
\label{thm:BifurcationStructure}
For each $n \in \mathbb{N}$, define $\kappa_n := 1 + 4\pi^2 n^2 D$. Then the following holds:
\begin{enumerate}
\item \textbf{Existence.}
At each bifurcation point $(\bar{\Phi}_n,\kappa_n)$, with $\bar{\Phi}_n=\kappa_n$, there exists a smooth local branch of nonconstant solutions
\[
\Gamma_n(s) = \big(\bar\varphi_n(s),\kappa_n(s)\big),
\]
admitting the following asymptotic expansion as $s\to 0$,
\begin{AlignedEquation}
\bar\varphi_n(s) = \kappa_n + s\sqrt{2}\cos(2\pi n x) + \mathcal{O}(s^2),\
\kappa_n(s) = \kappa_n + C_n \, s^2 + \mathcal{O}(s^3),
\end{AlignedEquation}
where $C_n = \frac{1}{4} - \frac{1}{16 D n^2 \pi^2} + 2 D n^2 \pi^2$.
\item \textbf{Type.}
The bifurcation at $(\bar{\Phi}_n,\kappa_n)$ is
\begin{itemize}
\item subcritical if $\kappa_n < \tfrac{3}{2}$, equivalently $D < \frac{1}{8\pi^2 n^2}$,
\item supercritical if $\kappa_n > \tfrac{3}{2}$, equivalently $D > \frac{1}{8\pi^2 n^2}$.
\end{itemize}
\item \textbf{Stability.}
The branch $\Gamma_n(s)$ is linearly stable near the bifurcation point if and only if $n=1$ and $\kappa_1 > \tfrac{3}{2}$. All remaining branches are linearly unstable in a neighborhood of the bifurcation point.
 Moreover, this local branch admits a global continuation in the sense of Rabinowitz and
\item \textbf{Turning point.}
 If $\kappa_1 < \tfrac{3}{2}$, the branch $\Gamma_1(s)$ undergoes a fold bifurcation at some $\kappa_f \in (0,\kappa_1)$.
\end{enumerate}
\end{theorem}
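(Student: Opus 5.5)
The plan is to apply the Crandall--Rabinowitz theorem to $F(U,\kappa)=DU_{xx}-U+\kappa e^U/\intT e^U\,dy$, viewed as a map $W^{2,2}_{e}(\T)\times(0,\infty)\to L^2_{e}(\T)$. The subscript $e$ denotes functions even about $x=0$. Restricting to even functions removes the translation invariance, so the kernel becomes one-dimensional.

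At the constant state $U\equiv\kappa$ (Remark~\ref{Rem:Cst}) we have $e^U/\intT e^U=1$. The linearization is therefore
\begin{AlignedEquation}
L_\kappa\varphi=D\varphi_{xx}+(\kappa-1)\varphi-\kappa\intT\varphi\,dy .
\end{AlignedEquation}
It is diagonal in the Fourier basis. On $\cos(2\pi kx)$ with $k\ge1$ it acts as multiplication by $\kappa-1-4\pi^2k^2D$, and on constants it acts as $-1$. Hence $\ker L_{\kappa_n}=\mathrm{span}\{\sqrt2\cos(2\pi nx)\}$ in the even class, the range is the $L^2$-orthogonal complement of this function, and $L_\kappa$ is Fredholm of index zero.

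Transversality holds because $\partial_\kappa L_\kappa\cos(2\pi nx)=\cos(2\pi nx)\notin\mathrm{Ran}\,L_{\kappa_n}$. This gives the branch $\Gamma_n(s)$ with the stated leading term. Reflection $x\mapsto x+\frac1{2n}$ maps $\cos(2\pi nx)\mapsto-\cos(2\pi nx)$, so the branch is symmetric in $s$ and $\kappa_n'(0)=0$.

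To obtain $C_n$, I would do a Lyapunov--Schmidt expansion, writing $U=\kappa_n+s\phi+s^2w_2+s^3w_3$ with $\phi=\sqrt2\cos(2\pi nx)$.
\begin{itemize}
\item At second order, expanding $e^U/\intT e^U$ gives a forcing proportional to $\phi^2-\intT\phi^2$, which is a multiple of $\cos(4\pi nx)$. Then $w_2$ is obtained by dividing by the symbol $\kappa_n-1-16\pi^2n^2D=-3a$, where $a:=4\pi^2n^2D$.
\item At third order, projecting onto $\phi$ yields $\kappa_n''(0)/2=C_n$.
\end{itemize}
I expect this to simplify to
\begin{AlignedEquation}
C_n=\frac14-\frac1{4a}+\frac a2=\frac{(2a-1)(a+1)}{4a}.
\end{AlignedEquation}
Then $C_n>0$ exactly when $a>\frac12$, i.e.\ when $\kappa_n>\frac32$, which is part (2). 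Since $\kappa_n(s)-\kappa_n\approx C_ns^2$, the branch opens to the right (supercritical) when $C_n>0$ and to the left (subcritical) when $C_n<0$.

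For part (3), I would use the spectrum of $L_{\kappa_n}$ on all of $L^2(\T)$, not just the even class. For $n\ge2$, the mode $k=1$ has eigenvalue $\kappa_n-1-4\pi^2D>0$. By continuity of isolated eigenvalues, this eigenvalue stays positive along $\Gamma_n$ near $s=0$, so these branches are unstable.

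For $n=1$, the remaining spectrum splits as follows.
\begin{itemize}
\item The modes $k\ge2$ and the constant mode have eigenvalues bounded away from zero on the negative side.
\item The $\sin(2\pi x)$ direction becomes the translation zero mode $\partial_x\bar\varphi_1(s)$. It stays at $0$, so stability is understood modulo translations.
\item The critical eigenvalue $\mu(s)$ in the even class is controlled by the Crandall--Rabinowitz exchange-of-stability formula. It gives $\mu(s)\sim -s\,\kappa_1'(s)\,\partial_\kappa\sigma(\kappa_1)$, where $\sigma(\kappa)=\kappa-1-4\pi^2D$ is the critical eigenvalue at the constant state and $\partial_\kappa\sigma=1$. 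Hence $\mu(s)\sim-2C_1s^2$, which is negative iff $C_1>0$.
\end{itemize}
So the $n=1$ branch is stable iff $\kappa_1>\frac32$. The global continuation follows from Rabinowitz's theorem once $F$ is rewritten as (identity $+$ compact) via $(1-D\partial_{xx})^{-1}$.

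Part (4) is the main obstacle. Let $\mathcal C$ be the Rabinowitz continuum through $(\kappa_1,\kappa_1)$, restricted to even solutions that have exactly one interior maximum and are monotone on $(0,\frac12)$. First, this nodal property is open and closed along $\mathcal C$ away from constants: $U_x$ solves the linearized ODE, and Sturm comparison prevents new critical points from appearing. Consequently $\mathcal C$ cannot reach the points $(\kappa_m,\kappa_m)$ with $m\ge2$. The Rabinowitz alternative then forces $\mathcal C$ to be unbounded in $W^{2,2}\times\R$.

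Second, I would prove a priori bounds on $\|U\|_{W^{2,2}}$ uniformly for $\kappa$ in compact subsets of $(0,\infty)$. The maximum principle gives $U\le\kappa e^{\max U}/\intT e^U$, and the $L^1$ bound $\intT U=\kappa$ together with Harnack-type estimates should control this. With these bounds, unboundedness can only occur through $\kappa\to0$ or $\kappa\to\infty$.

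Third, I would exclude $\kappa\to0$. This requires that $D_{\max}(\kappa)\to0$ as $\kappa\to0$ in Theorem~\ref{thm:ExistenceStationary}, so that for fixed $D$ only the constant solution exists when $\kappa$ is small. I would verify this from the energy estimate, where the nonlinearity's Lipschitz constant is of order $\kappa$.

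Putting these together: a subcritical branch starts by moving into $\kappa<\kappa_1$, stays in $\kappa>\underline\kappa>0$, and must eventually reach $\kappa\to\infty$. By continuity, $\kappa$ attains a minimum $\kappa_f\in(0,\kappa_1)$ along the branch, and this is the fold.
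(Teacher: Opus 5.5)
\textbf{Parts (1)--(3).} Here you follow the paper's route: Crandall--Rabinowitz in the reflection-symmetric class, a third-order solvability condition for the $s^2$ coefficient, persistence of the positive $k=1$ mode for $n\ge 2$, and exchange of stability for $n=1$.

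On one point you are more careful than the paper. In all of $L^2(\T)$ the zero eigenvalue at $\kappa_1$ is double ($\cos$ and $\sin$). So stability of $\Gamma_1$ can only hold modulo the translation mode $\partial_x\bar\varphi_1(s)$. Likewise, Rabinowitz's theorem (which needs odd multiplicity) must be applied in the even class. The paper's claims that the zero eigenvalue is simple are true only there.

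A small fix: $x\mapsto x+\frac{1}{2n}$ preserves the even class only for $\frac1n$-periodic functions. Either note that $\Gamma_n$ lies in that subspace by local uniqueness, or get $\kappa_n'(0)=0$ from the second-order solvability condition, as the paper does.

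\textbf{The constant $C_n$.} The real problem is that you \emph{expect} the stated $C_n$ instead of computing it, and your own expansion does not produce it. With $\int w_j=0$ one gets $\log\int e^{U-\kappa_n}=\frac{s^2}{2}+O(s^4)$ and $w_2=A\cos(4\pi nx)$ with $A=\kappa_n/(6a)$. The projection of the $s^3$ equation onto $\phi$ is then
\[
\kappa_n\Bigl(\frac{A}{2}-\frac12+\frac16\cdot\frac32\Bigr)+\frac12\kappa_n''(0)=0,
\qquad\text{i.e.}\qquad
\frac12\kappa_n''(0)=\frac{\kappa_n(3a-\kappa_n)}{12a}=\frac{(2a-1)(a+1)}{12a}.
\]
This is one third of the stated $C_n$. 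The reduced energy $\frac12(\kappa_n-\kappa)s^2+\frac{\kappa_n}{16}\bigl(1-\frac{\kappa_n}{3a}\bigr)s^4$ confirms it.

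The paper's value comes from two slips:
\begin{enumerate}
\item In the third derivative of $\mathcal G$ it writes $2\alpha_n''(0)\varphi_n$, whereas the chain rule gives $3G_{\chi\alpha}(0,0)(\varphi_n)\,\alpha_n''(0)=3\alpha_n''(0)\varphi_n$.
\item It then uses $\alpha_n''(0)$ rather than $\frac12\alpha_n''(0)$ as the $s^2$ coefficient.
\end{enumerate}
The sign is unchanged. So part (2), the threshold $\kappa_n=\frac32$, and your conclusion $\mu(s)\sim-\kappa_1''(0)s^2$ all stand. The precise constant in part (1), however, cannot be proved as stated.

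\textbf{Part (4).} Here your route differs from the paper's and is heavier than needed. The paper never decides between the Rabinowitz alternatives; it uses three facts.
\begin{enumerate}
\item Testing the equation with $U$ gives $D\|U_x\|_{L^2}^2+\|U\|_{L^2}^2\le\kappa\|U\|_{L^\infty}$, hence $\|U\|_{W^{1,2}}\le C\kappa$ for all $\kappa\le K$. This one-liner should replace your maximum-principle/Harnack sketch; the $W^{2,2}$ bound then follows from the equation.
\item Nonconstant solutions are absent for small $\kappa$, since $D_{\max}(\kappa)=15\kappa/(4\pi^2)\to 0$.
\item The trivial branch has no bifurcation points in $(0,\kappa_1)$.
\end{enumerate}
If $\mathcal C$ stayed in $\{\kappa\le\kappa_1\}$, it would be bounded and would meet the trivial branch only at $\kappa_1$, contradicting Rabinowitz's alternative. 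Hence $\mathcal C$ contains points with $\kappa>\kappa_1$. Compactness of $\mathcal C\cap\{\kappa\le\kappa_1\}$ then yields $\kappa_f=\min_{\mathcal C}\kappa\in(0,\kappa_1)$.

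Your nodal argument is correct: zeros of $U_x$ are simple by ODE uniqueness, so the nodal class is locally constant in $C^1$ away from constants. It buys more than the paper's argument: the continuum stays unimodal, is unbounded, and reaches $\kappa\to\infty$. But it is not needed for the turning point. Neither argument shows that this minimum is a nondegenerate fold.
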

Theorem~\ref{thm:BifurcationStructure} summarizes the bifurcation analysis carried out in Section~\ref{sec:Bifurcation_analysis}. It combines local bifurcation results obtained via the Crandall–Rabinowitz theorem with a global continuation analysis in the sense of Rabinowitz, thereby revealing the presence of fold bifurcations and bistable regimes. Together, these results show that the transition from the homogeneous to the patterned state is mediated by a symmetry-breaking bifurcation whose direction and stability depend critically on the balance between diffusion and reaction strength. In particular, the bifurcation undergoes a change of type at a critical parameter threshold, a phenomenon also observed in other nonlinear pattern-forming systems \cite{Berlyand2025_changeBifType}. They also provide a mathematical mechanism for the emergence of bistability, corresponding to the coexistence of homogeneous and patterned steady states. Such bistable behavior has been recently reported in {\it Hydra} regeneration \cite{Tursch2022}.

\begin{figure}[hb]
    \centering
\includegraphics[width=0.8\linewidth]{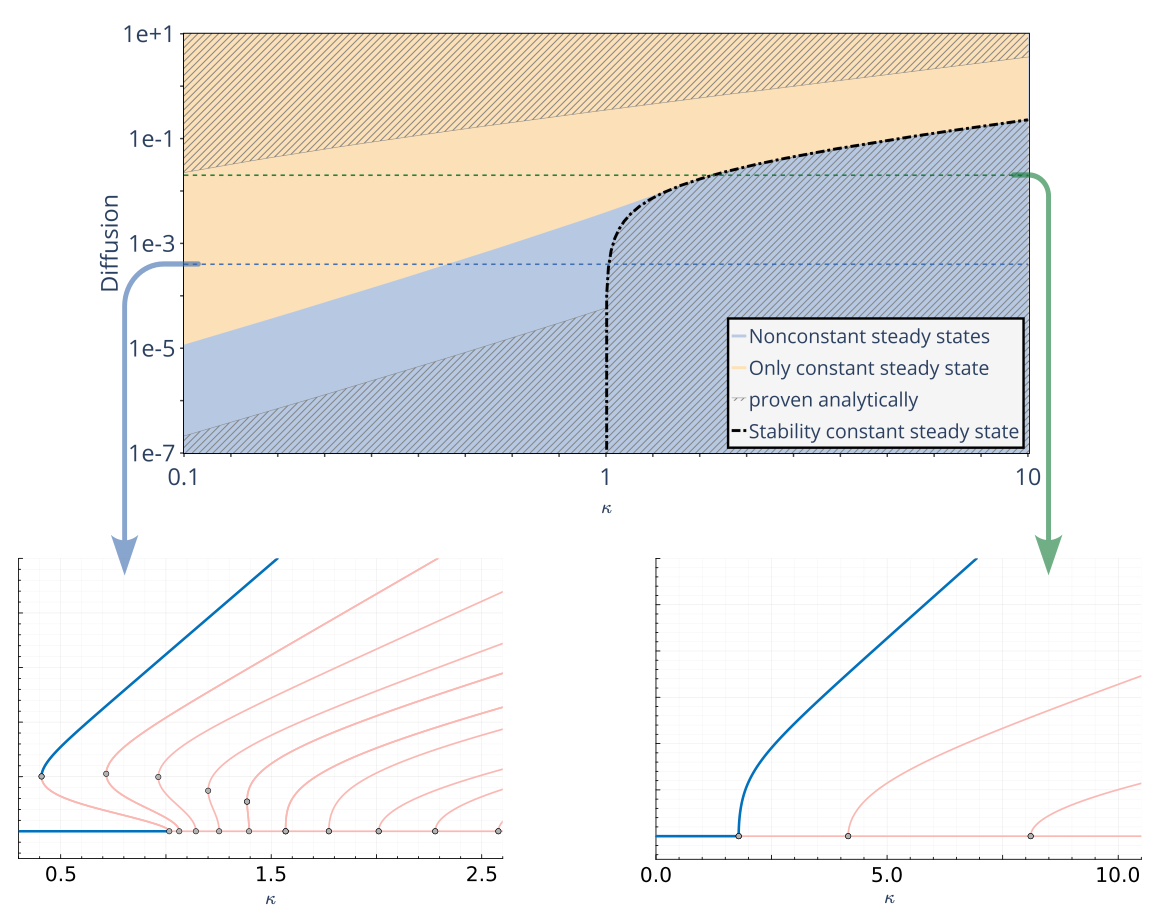}
    \caption{Parameter space analysis and bifurcation structure.
(Top) Parameter space $(D, \kappa)$ showing regions of numerical convergence to nonconstant (blue) versus constant (yellow) steady states, with gray areas indicating analytically proven results. (Bottom) Bifurcation diagrams demonstrating subcritical regime with bistability (left, $D < 1/(8\pi^2)$) and supercritical regime with direct pattern formation (right, $D > 1/(8\pi^2)$), where blue and red curves denote stable and unstable branches, respectively.}
    \label{fig:ParameterSpace}
    \vspace{-0.2cm}
\end{figure}

We conclude this section by illustrating our analytical results through numerical computations. The analytical bounds $D_{\min}(\kappa)$ and $D_{\max}(\kappa)$ obtained in Theorem \ref{thm:ExistenceStationary} are not expected to be sharp, as they arise from relatively coarse estimates. Nevertheless, the numerical results presented below demonstrate that these bounds correctly capture the qualitative structure of the parameter space. In particular, Figure~\ref{fig:ParameterSpace} confirms the predicted transition between subcritical and supercritical bifurcation regimes at the critical threshold. Moreover, the numerical simulations illustrate the existence of bistable parameter regions arising from fold bifurcations and thereby support the overall bifurcation framework developed in the analytical part of this work. While Theorem \ref{thm:BifurcationStructure} provides a complete characterization of bifurcating branches in the neighborhood of each bifurcation point, the global continuation of these branches lies beyond the reach of the analytical techniques employed here. In the subcritical regime $\kappa < 1.5$ the theory guarantees only that the primary branch $\Gamma_1(s)$ possesses at least one turning point. Our numerical analysis indicates, however, that this branch exhibits exactly one fold bifurcation in the interval $(0, \kappa_1)$, and that the resulting stable branch does not undergo further secondary bifurcations.\\
These observations complement Theorem~\ref{thm:ExistenceStationary}, which ensures existence of a stable nonconstant stationary solution. Numerically, we observe precisely one stable nonconstant solution branch, unique up to spatial shifts due to periodic boundary conditions, suggesting uniqueness of the variational minimizer modulo translations.
In the context of the model application, Lyapunov stability is sufficient, as it describes symmetry breaking in systems where the position of the emerging organizer (given by a peak in morphogen concentration) is not prescribed {\it a priori}. 
 In addition, all unstable branches emerging from bifurcation points $(\bar{\Phi}_n, \kappa_n)$ with $\kappa_n < 1.5$ display an analogous subcritical structure with a single fold bifurcation. Branches associated with different modal numbers remain disjoint for all values of $\kappa$.

\section{Model formulation}
\label{sec:Deriv}

\begin{figure}[b]
\centering
\includegraphics[width=0.6\linewidth]{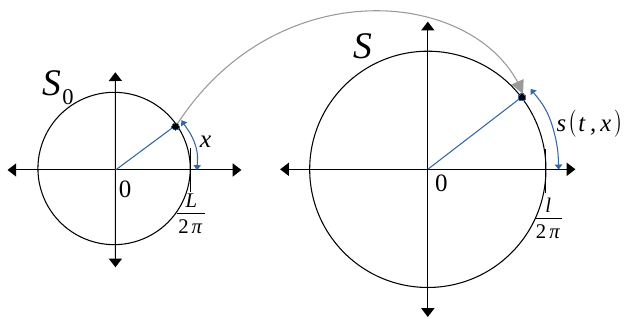}
\caption{Schematic representation of the elastic deformation framework for the {\it Hydra} spheroid model. Left: Circular reference configuration $\mathcal{S}_0$ parametrized by arc-length $x \in [0,L]$.  Right: Deformed circular configuration $\mathcal{S}$ in which material points have arc-length position $s(x) \in [0, \ell]$. }
\label{fig:sketch2d_frameworkold}
\end{figure}

We reformulate the mechanochemical patterning model for \emph{Hydra} spheroids introduced in~\cite{Weevers2025} in two spatial dimensions in which the spheroid is described as a closed curve.
Starting from the model derivation and formulation in 3D using index notation, one can specialise the notation in 2D as it was done in the supplementary material of ~\cite{Weevers2025}. Alternatively, we derive the model directly, using the same modelling arguments but without using index notation, in the Appendix~\ref{appsec:Deriv}.

Ultimately, we derive a reduced spatially one-dimensional formulation of the original model in~\cite{Weevers2025} that retains the essential coupling between mechanics and chemistry while simplifying the geometric structure. In this setting, the \emph{Hydra} spheroid is represented as an elastic closed curve, providing a minimal framework to analyze the core mechanochemical feedback mechanisms, see Figure~\ref{fig:sketch2d_frameworkold}.

The stress-free reference configuration, denoted by $\mathcal S_0$, is assumed to be a circle of fixed circumference $L>0$. It is parametrised by $x \in [0, L]$ which denotes the arc-length associated to a given point in $\mathcal{S}_0$. We prescribe a fixed enclosed area $A_0>\operatorname{Area}(\mathcal S_0)$ and consider deformations of the reference shape which enclose the area $A_0$. In Appendix~\ref{appsec:Deriv} we show that, as a consequence of stress balance and the Law of Laplace, the deformed spheroid is always a circle of the given circumference $\ell>L$ and that no other shapes are possible.

The scalar function $s(x):[0,L]\mapsto[0,l]$, $s'>0$, represents the arc-length position in the deformed spheroid $\mathcal{S}$ of the material point with arc-length position $x$ in $\mathcal{S}_0$. As the deformed spheroid is circular, any deformation is by shifting material points tangentially within the hydra tissue. The full mechanochemical system written in terms of the morphogen concentration $u=u(t,x)$ and the strain $\varepsilon(t,x)$ reads
\begin{equation} \label{sys_eps}
\left\{
\begin{aligned}
    &\begin{aligned}        
        \partial_t u &= \kappa \, \varepsilon - \alpha u + D \, \partial_{xx} u,\\
        0 &= \partial_x \big(E(u) \, \varepsilon\big),
    \end{aligned} 
    && x \in [0,L], \quad t \ge 0,\\
    & l - L = \int_0^L \varepsilon \, d x, && t \ge 0.
\end{aligned}
\right.    
\end{equation}
This system of equations combines stress balance where $E(u)>0$ is the elastic modulus given by a monotonically decreasing function of $u$. This models increased tissue elasticity (reduced stiffness) in response to higher expression of morphogen. This implies that solutions {are} characterised by larger strain where the morphogen concentration is high.

This ''mechanical'' equation is coupled to a ''chemical'' reaction diffusion equation for the morphogen concentration. Here, $D$ denotes diffusivity, $\alpha$ is the spontaneous disassociation rate and $\kappa>0$ is a given parameter associated to the expression of morphogen in response to tissue strain $\varepsilon$. Up-regulation of morphogen concentration in response to tissue strain complements increased strain where morphogen is over-expressed according to the mechanical part of the model. Together, these two components form a positive  local feedback loop.

Note that following infinitesimal stress theory, we write the strain as $\varepsilon=s'-1$ which motivates the third equation in \eqref{sys_eps} and expresses conservation of total strain, effectively introducing mechanical long-range inhibition, which acts alongside the positive feedback between local strain and morphogen concentration. This combination implements the classical local activation - long-range inhibition (LALI) mechanism.\\
From the momentum balance equation in \eqref{sys_eps}, the stress depends only on time $t$, that is,
\begin{AlignedEquation}
    \sigma(t) := E(u(t,\xi)) \, \varepsilon(t,\xi).
\end{AlignedEquation}
In particular, the strain can be expressed as
\begin{AlignedEquation}
    \varepsilon(t,\xi) = \frac{\sigma(t)}{E(u(t,\xi))}.
\end{AlignedEquation}
Integrating the strain over the whole domain and using the total strain constraint gives
\begin{AlignedEquation}
    l - L = \int_0^L \varepsilon(t,\xi) \, d\xi 
           = \sigma(t) \int_0^L \frac{1}{E(u(t,\xi))} \, d\xi.
\end{AlignedEquation}
Solving for the stress, we obtain
\begin{AlignedEquation}
    \sigma(t) = \frac{l - L}{\int_0^L \frac{1}{E(u(t,\xi))} \, d\xi},
\end{AlignedEquation}
and the corresponding strain reads
\begin{AlignedEquation}
    \varepsilon(t,x) = \frac{\sigma(t)}{E(u(t,x))} 
                         = \frac{l-L}{E(u(t,x)) \int_0^L \frac{1}{E(u(t,y))} \, dy}.
\end{AlignedEquation}
Using the closed-form expression for the strain, the morphogen equation in \eqref{sys_eps} can be written as
\begin{AlignedEquation}
    \label{eq:BasicLong}
    \partial_t u(t,x) = \frac{\kappa (l-L)}{E(u(t,x)) \int_0^L \frac{1}{E(u(t,y))} \, dy} - \alpha u + D \, \partial_{xx} u \, .
\end{AlignedEquation}
For the remainder of the manuscript, we assume that the elastic modulus depends exponentially on the morphogen concentration, 
\begin{AlignedEquation}
    E(u) = e^{-\beta u}, \quad \beta > 0 \, .
\end{AlignedEquation}
This choice allows \eqref{eq:BasicLong} to be endowed with a variational structure and ensures that the linearization of the spatial operator is self-adjoint.\\
We next nondimensionalize \eqref{eq:BasicLong} by introducing the scaling
\begin{AlignedEquation}
    t \mapsto t_0 \, t, \qquad \xi \mapsto x_0 \, x, \qquad u \mapsto u_0 \, u,
\end{AlignedEquation}
which leads to
\begin{AlignedEquation}
    \frac{u_0}{t_0} \, u_t = \kappa \frac{(l-L) e^{\beta u_0 u}}{x_0 \int_0^{L/x_0} e^{\beta u_0 u} \, dx} - \alpha u_0 u + D \frac{u_0}{x_0^2} \, u_{xx}.
\end{AlignedEquation}
Choosing 
\begin{AlignedEquation}
    u_0 = \frac{1}{\beta}, \quad t_0 = \frac{1}{\alpha}, \quad x_0 = L,
\end{AlignedEquation}
and defining the dimensionless constants
\begin{AlignedEquation}
    \kappa \frac{\beta (l-L)}{\alpha L} \mapsto \kappa, \qquad \frac{D}{\alpha L^2} \mapsto D,
\end{AlignedEquation}
we obtain the final nondimensionalized system
\begin{AlignedEquation}
    \label{eq:FinalDimless}
    u_t = D \, u_{xx} - u + \kappa \frac{e^u}{\int_0^1 e^u \, dy},
\end{AlignedEquation}
for $x \in [0,1]$, supplemented with periodic boundary conditions
\begin{AlignedEquation}
    u(0,t) = u(1,t), \qquad \partial_x u(0,t) = \partial_x u(1,t), \quad t \ge 0.
\end{AlignedEquation}

\section{Stationary solutions}\label{sec:Existence_steady_states}
\subsection{Variational formulation} Equation~\eqref{equ_varphi} can be interpreted as a gradient flow of the free energy functional with respect to $L^2$-norm (see \textit{e.g.} \cite{ambrosio2005gradient}), with
\begin{AlignedEquation}
    \label{eq:FunDef}
        \mathcal{J}\big(u(\cdot, t)\big) =  \frac{D}{2}\intT\big|u_x(x,t)\big|^2 \, dx + \frac{1}{2} \intT \big|u(x,t)\big|^2 \, dx - \kappa \log\left(\intT e^{u(x,t)} \, dx\right).
\end{AlignedEquation}
% and domain $L^2((0,T),W^{1,2}(\T))$. % TODO: add domain?
The first and second variations of $\mathcal{J}$ are given by
\begin{AlignedEquation}
        \label{eq:Ders}
        D\mathcal{J}\big(u(\cdot , t)\big)\psi&= 
	 D \intT u_x(x,t)\, \psi_x \, dx +\intT u(x,t)\, \psi \, dx-
	\kappa \frac{ \intT e^{u(x,t)} \, \psi \, dx }{ \intT e^{u(x,t)} \, dx }, \\
	D^2\mathcal{J}\big(u(\cdot, t)\big)(\psi,\xi)&=  
	D \intT \psi_x\, \xi_x \, dx +\intT \psi \, \xi \, dx \\
    &\quad-
	\kappa \left(  \frac{ \intT e^{u(x,t)} \, \psi \, \xi \, dx }{ \intT e^{u(x,t)} \, dx }-
  \frac{ \intT e^{u(x,t)} \, \psi \, dx \intT e^{u(x,t)} \, \xi \, dx }{ \left( \intT e^{u(x,t)} \, dx\right)^2 }\right) \; .
\end{AlignedEquation}
\\
The focus of this section is to analyze solutions to equation \eqref{equ_varphi_Stat} through the critical points of the functional $\mathcal{J}: W^{1,2}(\mathbb{T}) \to \mathbb{R}$, defined by formula \eqref{eq:FunDef}. A direct analysis of this functional can be challenging due to the difficulty in establishing the existence of nonconstant minimizers. To overcome these difficulties, we decompose the function $\varphi \in W^{1,2}(\mathbb{T})$ using the orthonormal eigenbasis of the Laplace operator with periodic boundary conditions.

\begin{proposition}
    \label{prop:Spectral}
    There exist $\lbrace \FL_k \rbrace_{k=0}^\infty$ an orthonormal basis of $L^2(\T)$ and an orthogonal basis of $W^{1,2}(\T)$ and a set of eigenvalues $\lbrace\EL_k \rbrace_{k=0}^\infty \subseteq [0, \; \infty)$, such that $(\FL_k, \EL_k)$ is the eigenpair of the operator~$\Delta$
    \begin{AlignedEquation}
        \Delta \FL_k = -\EL_k \FL_k,
    \end{AlignedEquation}
    with periodic boundary conditions. 
\end{proposition}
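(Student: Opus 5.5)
We prove Proposition~\ref{prop:Spectral} by writing the basis down explicitly with Fourier analysis on $\T$, which is how $L^2(\T)$ and $W^{k,2}(\T)$ are defined in the Notation section. We set $\FL_0\equiv 1$ and $\EL_0=0$. For $k\ge 1$ we set
\[
\FL_{2k-1}(x)=\sqrt{2}\cos(2\pi k x),\qquad \FL_{2k}(x)=\sqrt{2}\sin(2\pi k x),\qquad \EL_{2k-1}=\EL_{2k}=4\pi^2k^2 .
\]
Each $\FL_k$ is smooth and $1$-periodic, so it satisfies the periodic boundary conditions. Differentiating twice gives $\Delta \FL_k=\FL_k''=-\EL_k\FL_k$, and all $\EL_k\ge 0$. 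We use the real basis rather than $e^{2\pi i n x}$ so that the functions match the real-valued setting of \eqref{equ_varphi} and the expansion in Theorem~\ref{thm:BifurcationStructure}. Equivalently, one could take $e^{2\pi i n x}$, $n\in\mathbb Z$, and recombine each pair $\pm n$.

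\emph{Orthonormality and completeness in $L^2(\T)$.} Orthonormality follows from the elementary integrals $\intT \cos(2\pi jx)\cos(2\pi kx)\,dx=\tfrac12\delta_{jk}$ for $j,k\ge1$, the analogous identity for sines, and $\intT \cos(2\pi jx)\sin(2\pi kx)\,dx=0$. For completeness, the linear span of $\{\FL_k\}$ equals the span of $\{e^{2\pi i n x}\}_{n\in\mathbb Z}$, via $e^{\pm 2\pi i n x}=\cos(2\pi nx)\pm i\sin(2\pi nx)$. By the very definition of $L^2(\T)$ as the set of square-summable Fourier series, every $u\in L^2(\T)$ is the $L^2$-limit of its partial Fourier sums. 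Hence the closed span of $\{\FL_k\}$ is all of $L^2(\T)$.

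\emph{Orthogonal basis of $W^{1,2}(\T)$.} We equip $W^{1,2}(\T)$ with the inner product $\langle u,v\rangle_{W^{1,2}}=\intT u v\,dx+\intT u_x v_x\,dx$. Integrating by parts, with no boundary terms by periodicity, gives
\[
\langle \FL_j,\FL_k\rangle_{W^{1,2}}=\intT \FL_j\FL_k\,dx-\intT \FL_j\FL_k''\,dx=(1+\EL_k)\delta_{jk},
\]
so the family is orthogonal in $W^{1,2}$. For completeness, let $u\in W^{1,2}(\T)$ and write $u=\sum_k a_k\FL_k$ in $L^2$. Orthogonality gives
\[
\Big\|u-\sum_{k\le N}a_k\FL_k\Big\|_{W^{1,2}}^2=\sum_{k>N}(1+\EL_k)|a_k|^2 .
\]
By Parseval's identity and $\EL_k\sim 4\pi^2 n^2$, this tail is comparable to $\sum_{|n|>N'}(1+n^2)|\hat u(n)|^2$, which tends to $0$ because $u\in W^{1,2}(\T)$. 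Therefore the partial sums converge to $u$ in $W^{1,2}$, and the orthogonal family is complete.

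The only step that needs care is this completeness in $W^{1,2}$. Specifically, one must check that the Fourier-weighted norm $\sum(1+n^2)|\hat u(n)|^2$ in the definition of $W^{1,2}(\T)$ is equivalent to the inner-product norm above, with equivalence constants depending only on the factor $4\pi^2$. Once this is verified, the convergence of the partial sums is immediate.
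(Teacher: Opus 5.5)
Your proof is correct and follows the same route as the paper. The paper gives no separate argument: in the remark after the proposition it simply lists the explicit trigonometric eigenpairs $1$, $\sqrt2\cos(2\pi kx)$, $\sqrt2\sin(2\pi kx)$ with eigenvalues $4\pi^2k^2$. You supply the verification it omits (orthonormality, completeness in $L^2$, and completeness in $W^{1,2}$ via Parseval for $u_x$), and your indexing $\EL_{2k-1}=\EL_{2k}=4\pi^2k^2$ correctly records the double multiplicity that the paper's set notation glosses over.
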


\begin{remark}
    For $\T = [0,1]$ with periodic boundary conditions, the eigenpairs of the Laplacian are given by
    \begin{AlignedEquation}
        \lbrace \FL_k \rbrace_{k=0}^\infty &= \big\lbrace 1 \big\rbrace \bigcup\big\lbrace \sqrt{2} \sin(2k\pi x): k\in \mathbb{N} \big\rbrace \bigcup\big\lbrace \sqrt{2}\cos(2k\pi x): k\in \mathbb{N_+} \big\rbrace, \\
        \lbrace\EL_k \rbrace_{k=0}^\infty &= \lbrace 4\pi^2 k^2: k\in \mathbb{N} \rbrace
    \end{AlignedEquation}
    These eigenfunctions are uniformly bounded in $L^\infty$-norm, with $\|\FL_k\|_\infty = \sqrt{2}$ for $k \geq 1$ and $\|\FL_0\|_\infty = 1$.
\end{remark}

\noindent Using the eigenfunction decomposition of the Laplacian, we derive an equivalent system of algebraic equations satisfied by the Fourier coefficients of a stationary solution. Expressing the solution $U(x)$ to equation \eqref{equ_varphi_Stat} in the eigenbasis as $U = \sum_{k=0}^\infty \overline{R}_k \FL_k$, with $\overline{R} \in \ell^2(\N)$, we obtain 
\begin{AlignedEquation}    
    \label{eq:StatBas}
    0 = -D\SI{k} \overline R_k\EL_k \FL_k -  \SI k \overline R_k \FL_k + \kappa \frac{e^{\overline R_0 +  \SIO k \FRD k}}{\IE{\overline R_0 +\SIO k \FRD k}}.
\end{AlignedEquation}
We multiply equation \eqref{eq:StatBas} by $\FL_j$ and integrate over  domain $\mathbb{T}$. Then, dividing both the numerator and denominator by $e^{\overline{R}_0}$ and utilizing the orthonormality of the eigenbasis $\{\FL_j\}_{j=0}^\infty$, we obtain the following system of equations:
\begin{AlignedEquation}
    \label{eq:Rsystem0}
    0 = -\overline R_j (D\EL_j + 1) + \kappa \frac{\I{ e^{\SIO k \FRD k}\FL_j}}{\IE{\SIO k \FRD k}}, \qquad \text{for} \quad j\in \N
\end{AlignedEquation}
which we rewrite in the form
\begin{AlignedEquation}
    \label{eq:Rsystem}
    \overline R_j = \frac{\kappa}{1 + D\EL_j} \frac{\I{ e^{\SIO k \FRD k}\FL_j}}{\IE{\SIO k \FRD k}}, \qquad \text{for} \quad j\in \N.
\end{AlignedEquation}

\begin{remark}
    \label{rem:Rjbound}
    The right-hand side of equation \eqref{eq:Rsystem} is bounded and converges to $0$ as $j \to \infty$.  Indeed, we have $\|\exp(\sum \overline{R}_k \FL_k) \FL_j\|\Lo \leqslant \|\FL_j\|\Li\|\exp(\sum \overline{R}_k \FL_k)\|\Lo$. Since the eigenfunctions are uniformly bounded with $\|\FL_j\|_\infty = \sqrt{2}$ it follows that
    \begin{AlignedEquation}
        |\overline{R}_j| \leqslant \frac{\sqrt 2 \kappa }{1 + D\EL_j} \xrightarrow{j\to \infty} 0.
    \end{AlignedEquation}
\end{remark}

\noindent To facilitate the analysis of nonconstant solutions to  problem \eqref{equ_varphi_Stat}, we introduce a discrete counterpart of the functional $\mathcal J$ given by formula \eqref{eq:FunDef}, which offers a more convenient framework for establishing the existence of nonconstant solutions. We express the function $\varphi(x) \in W^{1,2}(\T)$ in the eigenbasis, then it holds
\begin{AlignedEquation}
    \label{eq:DiscreteFunctional}
    \mathcal J(\varphi) = \frac{1}{2}R_0^2 - \kappa R_0 +  \SIO k \frac{R_k^2}{2} (1 + D\EL_k ) - \kappa \log \left( \IE {\SIO k \FR k} dx \right),
\end{AlignedEquation}
where $\varphi(x) = \sum_{k=0}^\infty R_k \FL_k(x)$ with $R\in\omega_\EL^2$ and 
\begin{AlignedEquation}
    \label{eq:Domain}
    \omega^2_\EL = \left\lbrace {R} = (R_0, \, R_1, \, \ldots) \in \ell^2(\N_0): \|R\|_{\omega_\EL^2} :=  \sum_{k=0}^\infty R_k^2 \, (1 + D\EL_k) < \infty  \right\rbrace.
\end{AlignedEquation}
The second derivative of $\mathcal J$ given by equation \eqref{eq:Ders} and expressed in the eigenbasis, takes the form
\begin{AlignedEquation}
    \label{eq:SecDer0}
    \mathcal J_{\FL_0\FL_0}(\varphi) = 1 \qquad \text{and} \qquad \mathcal J_{\FL_0\FL_i}(\varphi) = \mathcal J_{\FL_i\FL_0}(\varphi) = 0 \quad \text{for} \quad i \in\N_+,
\end{AlignedEquation}
together with
\begin{AlignedEquation}
    \label{eq:HesSecDer}
    \mathcal J_{\FL_i\FL_i}(\varphi) &= 1 + D\EL_i  - \kappa { \I{\FL_i^2}(x) }p(x) + \kappa\left({ \I{\FL_i(x) p(x)}}\right)^2, \\
    \mathcal J_{\FL_i\FL_j}(\varphi) &= - \kappa { \I{\FL_i(x)\FL_j(x)p(x)} } + \kappa{\left( \I{\FL_i(x)p(x)}\right)\left( \I{\FL_j(x)p(x)}\right)},
\end{AlignedEquation}
where $p(x) = e^{\varphi} /\IE{\varphi}$ defines a probability measure on $(0,1)$. Thus by $L^\infty$-bounds of the eigenfunctions and Jensen inequality we immediately obtain
\begin{AlignedEquation}
    \label{eq:CoefEstim}
    1 + D\EL_i - 2\kappa \leqslant J_{\FL_i \FL_i}(\phi) \leqslant 1 + D\EL_i \qquad \text{and}\qquad |J_{\FL_i \FL_j}(\phi)| \leqslant 4\kappa
\end{AlignedEquation}
for all $i,j\in\N_+$.

We recall two well established properties of the functional $\mathcal{J}$ defined in \eqref{eq:FunDef}. These are included for completeness of the exposition.
\begin{lemma}
    \label{thm:Coercive}
    The functional $\mathcal{J}:W^{1,2} (\T)\to \R$ is bounded from below and coercive.
\end{lemma}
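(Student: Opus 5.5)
The plan is to bound the only non-quadratic term of $\mathcal J$, namely $-\kappa\log\left(\intT e^{u}\,dx\right)$, from above by a quantity that grows strictly slower than the quadratic part $\frac{D}{2}\|u_x\|_{L^2}^2+\frac12\|u\|_{L^2}^2=\frac12\|u\|^2_{W^{1,2}}$ (with the $D$-weighted norm). Since the $W^{1,2}(\T)$-norm controls the $L^\infty$-norm in one dimension, the logarithmic term grows at most linearly in the norm. Both claims of Lemma~\ref{thm:Coercive} then follow from a single inequality.

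First, I will use the one-dimensional Sobolev embedding $W^{1,2}(\T)\hookrightarrow L^\infty(\T)$. Explicitly, for $u\in W^{1,2}(\T)$ and any $x,y\in\T$,
\[
|u(x)|\le |u(y)|+\intT|u_x|\,dz .
\]
Averaging in $y$ and applying Cauchy--Schwarz gives
\[
\|u\|_{L^\infty}\le \|u\|_{L^2}+\|u_x\|_{L^2}\le C_D\Big(\tfrac{D}{2}\|u_x\|^2_{L^2}+\tfrac12\|u\|^2_{L^2}\Big)^{1/2},
\]
with $C_D$ depending only on $D$. Consequently
\[
\log\left(\intT e^{u}\,dx\right)\le \|u\|_{L^\infty},
\]
since $\intT e^u\le e^{\|u\|_\infty}$. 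Jensen's inequality gives the complementary lower bound $\log\intT e^u\ge\intT u$, though this is not needed here. It also shows that $\mathcal J$ is well defined and finite on $W^{1,2}(\T)$.

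Writing $N(u)^2=\frac{D}{2}\|u_x\|^2_{L^2}+\frac12\|u\|^2_{L^2}$, I then obtain
\[
\mathcal J(u)\ge N(u)^2-\kappa C_D N(u)\ge -\frac{\kappa^2C_D^2}{4}.
\]
This gives boundedness from below. It also gives $\mathcal J(u)\to\infty$ as $N(u)\to\infty$, and since $N$ is equivalent to the $W^{1,2}$-norm, this is coercivity. Equivalently, one can argue in the Fourier coordinates of \eqref{eq:DiscreteFunctional}: the term $\frac12R_0^2-\kappa R_0$ is explicit, and the remaining logarithm is bounded by $\sqrt2\sum_{k\ge1}|R_k|$, which is controlled by $\|R\|_{\omega^2_\EL}$ through Cauchy--Schwarz with the weights $(1+D\EL_k)^{-1}$. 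This works because $\sum_k (1+D\EL_k)^{-1}<\infty$ when $\EL_k=4\pi^2k^2$.

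There is no real obstacle in this argument. The only point needing care is to make the embedding constant explicit and to track the factor $D$ in the norm, so that the lower bound $-\kappa^2C_D^2/4$ is stated correctly. It is worth stressing that the proof relies on the logarithm turning the exponential into a linear growth term: without the $\log$, $\intT e^u$ would not be dominated by the quadratic part.
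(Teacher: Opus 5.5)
Your proof is correct and follows the same route as the paper: bound $\log\intT e^u\le\|u\|_{L^\infty}$, control $\|u\|_{L^\infty}$ by the $W^{1,2}$-norm through the one-dimensional Sobolev embedding, and let the quadratic part dominate the resulting linear term. You track the embedding constant $C_D$ explicitly and obtain the lower bound $-\kappa^2C_D^2/4$, whereas the paper silently takes that constant to be $1$; this makes your version slightly more careful, but the idea is identical.
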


\begin{proof}
The proof follows directly. By setting $C = \min(D, 1)/2$ and utilizing the embedding $W^{1,2}(\T) \subseteq L^\infty(\T)$, we obtain
    \begin{AlignedEquation}
        J(\varphi) &= \frac{D}{2}\intT (\varphi_x)^2 + \frac{1}{2}\intT \varphi^2 - \kappa \log\left(\intT e^\varphi\right) \\ &\geqslant C \|\varphi\|_{W^{1,2}}^2 - \kappa \|\varphi\|_{L^\infty} \\
        & \geqslant  C\|\varphi\|_{W^{1,2}} \left( \|\varphi\|_{W^{1,2}} - \kappa/C\right)
    \end{AlignedEquation} 
    and $J(\varphi) \to \infty$ as $\|\varphi\|_{W^{1,2}}\to \infty$.
\end{proof}

\begin{lemma}
    \label{thm:WeaklyLower}
    The functional $\mathcal{J}:W^{1,2} (\T)\to \R$ is weakly lower semicontinuous.
\end{lemma}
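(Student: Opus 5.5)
We argue separately for the three terms of $\mathcal J$, since a sum of weakly lower semicontinuous functionals is again weakly lower semicontinuous. Take a sequence $\varphi_n \rightharpoonup \varphi$ weakly in $W^{1,2}(\T)$. The quadratic part
\[
\frac{D}{2}\intT |\varphi_x|^2\,dx + \frac12\intT |\varphi|^2\,dx
\]
is the square of an equivalent Hilbert norm on $W^{1,2}(\T)$ (equivalence constants $\min(D,1)$ and $\max(D,1)$, both positive since $D>0$). It is convex and strongly continuous, hence weakly lower semicontinuous; equivalently, norms are weakly lower semicontinuous.

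For the nonlocal term $-\kappa\log\left(\intT e^{\varphi}\,dx\right)$ we prove full weak continuity, which is more than needed.

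1. Weakly convergent sequences are bounded, so $\sup_n\|\varphi_n\|_{W^{1,2}}\leqslant M$.
2. On the one-dimensional torus, $W^{1,2}(\T)$ embeds compactly into $C(\T)$ (Arzelà–Ascoli together with the Hölder bound $|\varphi(x)-\varphi(y)|\leqslant |x-y|^{1/2}\|\varphi_x\|_{L^2}$). Hence every subsequence of $\varphi_n$ has a further subsequence converging uniformly.
3. That uniform limit must be $\varphi$, since uniform convergence implies weak convergence in $L^2$ and weak limits are unique. Therefore the whole sequence converges uniformly: $\varphi_n\to\varphi$ in $L^\infty$.
4. The bound $\|\varphi_n\|_{L^\infty}\leqslant CM$ and the mean value theorem give
\[
|e^{\varphi_n}-e^{\varphi}|\leqslant e^{CM}|\varphi_n-\varphi|,
\]
so $\intT e^{\varphi_n}\,dx\to\intT e^{\varphi}\,dx$.
5. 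This limit is at least $e^{-CM}>0$, and $\log$ is continuous on $(0,\infty)$, so the logarithmic term converges.

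Adding the limit inferior of the quadratic part to the limit of the logarithmic part gives $\mathcal J(\varphi)\leqslant \liminf_n \mathcal J(\varphi_n)$.

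The only step needing care is the upgrade from weak to uniform convergence. This is exactly where the one-dimensional setting matters, through the compact embedding $W^{1,2}\hookrightarrow C$. In higher dimensions one would need a Trudinger–Moser-type argument instead. Everything else is routine.
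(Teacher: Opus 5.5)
Your proof is correct and follows essentially the same route as the paper: the quadratic part is treated as a weakly lower semicontinuous (squared) norm, and the logarithmic term is shown to converge using a compact embedding together with the uniform $L^\infty$ bound coming from the Sobolev embedding. The one difference is that you use the compact embedding $W^{1,2}(\T)\hookrightarrow C(\T)$ to get uniform convergence of the whole sequence directly, whereas the paper uses strong $L^2$ convergence plus Egorov's theorem. Your version is slightly cleaner, since it avoids the paper's implicit passage to an a.e.\ convergent subsequence.
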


\begin{proof}
    Let $\varphi_n\in W^{1,2}(\T)$ be a sequence converging $\varphi_n \rightharpoonup \varphi$ weakly in $W^{1,2}$. Clearly
    \begin{AlignedEquation}
        \intT |\partial_x\varphi|^2 \, dy &\leqslant \liminf_{n\to\infty} \intT |\partial_x\varphi_n|^2 \, dy, \\
        \intT |\varphi|^2 \, dy &\leqslant \liminf_{n\to\infty} \intT |\varphi_n|^2 \, dy.
    \end{AlignedEquation}
    Moreover by compact embedding, the sequence $\varphi_n \to \varphi$ converges strongly in $L^2$ and hence by Egorov theorem for each $\varepsilon>0$ there exist $U\subseteq \T$ such that $|\T \setminus U| <\varepsilon$ and $\varphi_n(x) \to \varphi(x)$ uniformly on $U$. By Sobolev embedding the sequence $\varphi_n$ is uniformly $L^\infty$-bounded providing
    \begin{AlignedEquation}
        \log\left(\int e^{\varphi_n}\right) \to\log\left(\int e^{\varphi}\right) \quad \text{as} \quad n\to\infty
    \end{AlignedEquation}
    and hence $\mathcal{J(\varphi)} \leqslant \liminf \mathcal{J}(\varphi_n)$.
\end{proof}

\subsection{Geometry of the functional}

We examine the critical points of the energy functional 
$\mathcal{J}$ given by the formula \eqref{eq:FunDef}.  A constant solution $\overline U \equiv \kappa$ is a critical point of $\mathcal{J}$ with $\mathcal{J}(\overline{U}) = -\kappa^2/2$. We demonstrate that, under certain conditions, such a constant solution is not a global minimizer of $\mathcal J$.

\begin{lemma}
    \label{thm:ExistenceOfMinus}
    For each $\kappa >0$ there exist a constant $D_{min}(\kappa)$ such that for all $D<D_{min}(\kappa)$, the infimum of the functional $\mathcal J$ is strictly less than $-\kappa^2/2$.
\end{lemma}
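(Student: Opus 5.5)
The plan is to exhibit, for each fixed $\kappa>0$, an explicit test function $\varphi\in W^{1,2}(\T)$ whose energy lies strictly below $\mathcal J(\overline U)=-\kappa^2/2$, and then to read off $D_{min}(\kappa)$ from the size of its Dirichlet term. I take $\varphi=\kappa+\psi$ with $\int_\T\psi\,dx=0$, i.e.\ $R_0=\kappa$ in \eqref{eq:DiscreteFunctional}. Then
\begin{AlignedEquation}
\mathcal J(\kappa+\psi) = -\frac{\kappa^2}{2} + \frac{D}{2}\intT \psi_x^2\,dx + \frac12\intT\psi^2\,dx - \kappa\log\left(\intT e^{\psi}\,dx\right),
\end{AlignedEquation}
because the constant cancels between the $\log$ term and the normalization. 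It therefore suffices to find a mean-zero $\psi$ with
\begin{AlignedEquation}
G(\psi):=\frac12\intT\psi^2\,dx-\kappa\log\left(\intT e^{\psi}\,dx\right)<0.
\end{AlignedEquation}
Once such a $\psi$ is fixed, set $D_{min}(\kappa):=-2G(\psi)/\|\psi_x\|_{L^2}^2>0$. For every $D<D_{min}(\kappa)$ this gives $\inf\mathcal J\le\mathcal J(\kappa+\psi)<-\kappa^2/2$.

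Small Fourier perturbations do not work for all $\kappa$. The choice $\psi=s\sqrt2\cos(2\pi x)$ gives $G\approx \frac{s^2}{2}(1-\kappa)$, which is negative only for $\kappa>1$. For general $\kappa$ I would instead use a concentrated bump, exploiting that $\log\int e^\psi$ grows linearly in the height while $\int\psi^2$ grows like height squared times width. Take $\psi$ equal to $a$ on an interval of length $\delta$, equal to $-a\delta/(1-\delta)$ on the complement so that the mean vanishes, and joined by linear ramps of width $h\ll\delta$ so that $\psi\in W^{1,2}(\T)$. Up to $O(h)$ corrections:
\begin{itemize}
\item $\int\psi^2\approx a^2\delta+a^2\delta^2/(1-\delta)\le 2a^2\delta$;
\item $\int e^\psi\ge\delta e^a$, hence $\log\int e^\psi\ge a+\log\delta$.
\end{itemize}
Choosing $\delta=a^{-2}$ yields $G(\psi)\le 1-\kappa(a-2\log a)$, which is negative once $a$ is large enough depending on $\kappa$. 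The ramps contribute $O(a^2h)$ to $\int\psi^2$ and do not decrease the lower bound on $\int e^\psi$ (the function stays at least $a$ on the core interval). Hence $h$ can be fixed small, for instance $h=\delta/2$, without destroying the sign.

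The only delicate point is bookkeeping rather than anything conceptual. One must check that the mean-zero correction and the ramps do not spoil the inequality $G<0$. One must also check that the resulting $\|\psi_x\|_{L^2}^2\approx 2a^2/h$ is finite, which gives a positive (if very small) $D_{min}(\kappa)$. For $\kappa>1$ one may alternatively use the cosine perturbation, which gives a cleaner bound of order $(\kappa-1)/(4\pi^2)$. The lemma only asks for existence of some $D_{min}(\kappa)>0$, so the coarse bump estimate suffices.
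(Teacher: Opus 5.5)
Your argument is correct, but it uses a different test function from the paper's proof.

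\textbf{The paper's route.} The paper works entirely in the Fourier basis. It takes
\[
\varphi_N=\kappa+\sum_{k=1}^N \frac{\sqrt2\kappa}{1+D\mu_k}\,\sqrt2\cos(2\pi kx),
\]
whose coefficients sit at the a priori maximal size from Remark~\ref{rem:Rjbound}. It bounds $\int_0^1 e^{\varphi_N-\kappa}\,dx$ from below using $\sqrt2\cos(2\pi kx)\ge 1$ for $x$ within $1/(8N)$ of $0$ on $\T$. This gives
\[
\mathcal J(\varphi_N)\le -\frac{\kappa^2}{2}-\kappa^2(\sqrt2-1)\frac{N}{1+D\mu_N}+\kappa\log(4N).
\]
Optimizing over $N$ yields the explicit threshold $D_1(\kappa)=\max_N \frac{\kappa N(\sqrt2-1)-\log(4N)}{\mu_N\log(4N)}$. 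For $\kappa>1$ this is combined with $D_2(\kappa)=(\kappa-1)/\mu_1$ from the second variation in the direction of the first cosine mode, which is exactly your cosine remark.

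\textbf{Your route.} You build a trapezoidal bump in physical space that does not depend on $D$. After fixing the mean, $D$ enters $\mathcal J(\kappa+\psi)$ only through the affine term $\frac D2\|\psi_x\|_{L^2}^2$. So the threshold can be read off immediately, and the proof reduces to the elementary inequality $\frac12\int\psi^2<\kappa\log\int e^{\psi}$, with no Fourier sums.

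\textbf{Same mechanism.} Both proofs rest on the same fact. The term $\kappa\log\int e^{\psi}$ grows linearly in the peak height, while the $L^2$ cost grows like height squared times width. The paper's truncated cosine sum is itself a Dirichlet-kernel-type peak of height about $\kappa N$ on a width about $1/N$.

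\textbf{What each buys.} The paper's version gives a closed-form threshold, which it uses to mark the analytically proven region in Figure~\ref{fig:ParameterSpace}. Yours gives a shorter argument and a test function decoupled from $D$. Your bookkeeping also holds up. With $h=\delta/2=a^{-2}/2$, the ramps add only a bounded amount to $\frac12\int\psi^2$. Restoring zero mean shifts $\log\int e^\psi$ by only $O(1/a)$. Both corrections are absorbed by $-\kappa(a-2\log a)$ once $a$ is large.
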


\begin{proof}
    We proceed by considering two distinct cases. For an arbitrary $\kappa > 0$, we first construct a vector $\hat{R}^N \in \mathbb{R}^\N$ defined as
    \begin{AlignedEquation}
        \hat R^N = \left(\kappa, \ \frac{\sqrt{2} \kappa }{1+D\EL_1}, \ \ldots \ , \ \frac{\sqrt 2 \kappa}{1+D\EL_N}, \ 0, \ 0, \ \ldots \right).
    \end{AlignedEquation}
    This vector is a finite-dimensional approximation of the maximal possible values of $\overline R_j$, as noted in Remark \ref{rem:Rjbound}. Let us consider the corresponding function $\varphi_N(x)$ expressed in terms of the eigenbasis~$\lbrace \EL_k(x)\rbrace_{k=0}^\infty$, where the cosine components are given by $\hat R^N$ and the sine components vanish:
    \begin{AlignedEquation}
        \varphi_N(x) = \sum_{k=0}^N \hat R^N_k \hat \FL_k(x) \quad \text{with} \quad \hat \FL_k(x) = \sqrt{2} \cos(2\pi k x), \quad \hat \FL_0 = 1.
    \end{AlignedEquation}
    We demonstrate that $\mathcal{J}(\varphi_N) < -\kappa^2/2$ for sufficiently large $N$. From equation \eqref{eq:DiscreteFunctional}, we have
    \begin{AlignedEquation}
        \label{eq:FunPoint}
        \mathcal{J}\left( \varphi_N\right) = -\frac{\kappa^2}{2} + \kappa^2\sum_{k=1}^N \frac{1}{1+D\EL_k} - \kappa \log \left(\intT e^{\sum_{k=1}^N \frac{\sqrt 2\kappa}{1+D\EL_k}\hat \FL_k}dx\right).
    \end{AlignedEquation}
    Now we apply a rough estimate for the functions $\hat \FL_k(x) = \sqrt{2} \cos(2\pi k x)$. For each $k \in \{1, \ldots, N\}$ we have
    \begin{AlignedEquation}
       \hat \FL_k(x) \geqslant 1 \quad \text{for} \quad x \in [0, 1/(8k)] \cup [1 - 1/(8k), 1].
    \end{AlignedEquation}
    Consequently, $\hat \FL_k(x)\geqslant 1$ for $x\in [0, 1/(8N)] \cup [1 - 1/(8N), 1]$. This allows us to derive the lower bound  
    \begin{AlignedEquation}
        \intT e^{\sum_{k=1}^N \frac{\sqrt 2 \kappa}{1+D\EL_k}\hat \FL_k}dx \geqslant 2\int_0^{1/8N} e^{\sum_{k=1}^N \frac{\sqrt 2 \kappa}{1+D\EL_k}\hat \FL_k}dx \geqslant e^{\sum_{k=1}^N \frac{\sqrt 2 \kappa}{1+D\EL_k}} \frac{1}{4N}
    \end{AlignedEquation}
    which yields
    \begin{AlignedEquation}
        \label{eq:FunRoughEstim}
        \kappa \log \left(\intT e^{\sum_{k=1}^N \frac{\sqrt 2\kappa}{1+D\EL_k}\hat \FL_k}dx\right) \geqslant  \kappa^2\sum_{k=1}^N \frac{\sqrt 2}{1+D\EL_k} - \kappa\log(4N).
    \end{AlignedEquation}
    Applying inequality \eqref{eq:FunRoughEstim} to equation \eqref{eq:FunPoint}, we obtain
    \begin{AlignedEquation}
        \label{eq:UpperFunctionalEstimate}
        \mathcal{J}\left(\varphi_N\right) \leqslant -\frac{1}{2}\kappa^2 -\kappa^2\sum_{k=1}^N \frac{\sqrt 2 - 1}{1+D\EL_k} + \kappa\log(4N) \leqslant -\frac{1}{2}\kappa^2 -\kappa^2(\sqrt 2 - 1)\frac{N}{1+D\EL_N} + \kappa\log(4N).
    \end{AlignedEquation}
    To proceed with our analysis, we examine the conditions ensuring that
    \begin{AlignedEquation}
        \label{eq:Neg}
        -\frac{1}{2}\kappa^2-\kappa^2(\sqrt 2 - 1)\frac{N}{1+D\EL_N} + \kappa\log(4N) < -\frac{1}{2}\kappa^2
    \end{AlignedEquation}
    holds true for some $N\in\N$. Through algebraic manipulation, inequality \eqref{eq:Neg} can be equivalently expressed as
    \begin{AlignedEquation}
        \label{eq:DEstim}
        D < \frac{\kappa(\sqrt 2 - 1)N - \log(4N)}{\EL_N\log(4N)}
    \end{AlignedEquation}
    We observe that the right-hand side of inequality \eqref{eq:DEstim} converges to zero as $N\to\infty$ and is positive for sufficiently large $N$, thus ensuring the existence of a global maximum. Accordingly, we define
    \begin{AlignedEquation}
        \label{eq:MaxN}
        D_1(\kappa) := \max_{N\in\N}\frac{\kappa N(\sqrt2-1) - \log(4N)}{\EL_N\log(4N)}.
    \end{AlignedEquation}
    Consequently, for any $D < D_1(\kappa)$, we obtain the strict inequality $\mathcal{J}(\varphi_N) < -\kappa^2/2$.

    An alternative approach to establish that the infimum is smaller than $\mathcal{J}(\overline U)$ involves analyzing the convexity properties of the functional $\mathcal{J}$ at the constant solution.
    %the negativity of the infimum
    From formula \eqref{eq:HesSecDer}, the second derivative at $\overline U\equiv \kappa$ is expressed as $J_{\FL_1\FL_1}(\overline U) = 1+D\EL_1 - \kappa$. When $\kappa >1$ and $D<D_2(\kappa):=(\kappa-1)/\EL_1$, the functional exhibits strict concavity in the direction of $\FL_1$, leading again to $\inf \mathcal{J}(\varphi) < -\kappa^2/2$.
    We subsequently define
    \begin{AlignedEquation}
    D_{min}(\kappa) = \begin{cases}
    D_1(\kappa), & \kappa \in (0,1], \\
    \max \big(D_1(\kappa), D_2(\kappa)\big), & \kappa >1,
    \end{cases}
    \end{AlignedEquation}
    which ensures that $\inf \mathcal{J} < \mathcal{J}(\kappa)$ whenever $D < D_{min}$.
\end{proof}

\begin{remark}
    In Lemma \ref{thm:ExistenceOfMinus}, we introduced two critical diffusion values, $D_1$ and $D_2$, to characterize distinct behavioral regimes of the functional $\mathcal{J}$. For $D>D_2$ (or for any $D$ when $\kappa\leqslant1$), the constant function $\overline U(x) \equiv \kappa$ is a local minimum of $\mathcal{J}$. However, in the regime where $D_1>D_2$ (which occurs for sufficiently small values of $\kappa$), the functional may admit multiple local minima, corresponding to coexisting stable steady states (see Remark \ref{rem:Stab}).
\end{remark}

Next we deal with the case when the diffusion is large.

\begin{lemma}
    \label{thm:ZeroOnly}
    For each $\kappa >0$ there exist a constant $D_{max}(\kappa)$ such that if $D>D_{max}(\kappa)$ then the functional $\mathcal J$ is globally convex.
\end{lemma}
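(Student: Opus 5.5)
The plan is to show that the second variation $D^2\mathcal J(u)(\psi,\psi)$ from \eqref{eq:Ders} is bounded below by a positive multiple of $\|\psi\|_{L^2}^2$, uniformly in $u\in W^{1,2}(\T)$, as soon as $D$ is large. Global convexity of $\mathcal J$ then follows by integrating along segments: for $g(t)=\mathcal J(u+t\psi)$ we have $g''(t)=D^2\mathcal J(u+t\psi)(\psi,\psi)\geqslant 0$.

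\textbf{Rewriting the nonlocal term.} Put $p=e^u/\intT e^u\,dy$, which is a probability density. The nonlocal part of the second variation is $\kappa\operatorname{Var}_p(\psi)$, since
\[
\intT p\psi^2 - \Big(\intT p\psi\Big)^2=\operatorname{Var}_p(\psi)\geqslant 0 .
\]
Hence
\[
D^2\mathcal J(u)(\psi,\psi)=D\|\psi_x\|_{L^2}^2+\|\psi\|_{L^2}^2-\kappa\operatorname{Var}_p(\psi).
\]
The variance is unchanged when a constant is subtracted from $\psi$. Write $\psi=c+\tilde\psi$ with $c=\intT\psi$ and $\intT\tilde\psi=0$. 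Then
\[
\operatorname{Var}_p(\psi)=\operatorname{Var}_p(\tilde\psi)\leqslant \intT p\,\tilde\psi^2\leqslant \|\tilde\psi\|_{L^\infty}^2 .
\]
This bound does not depend on $u$, and that independence is the whole point of the argument.

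\textbf{Controlling the mean-zero part.} For a mean-zero function on the unit torus, $\tilde\psi$ vanishes somewhere, so $|\tilde\psi(x)|\leqslant\intT|\tilde\psi_x|$. Applying Cauchy--Schwarz gives $\|\tilde\psi\|_{L^\infty}^2\leqslant\|\psi_x\|_{L^2}^2$. (A Fourier argument with the eigenbasis of Proposition~\ref{prop:Spectral} gives a similar estimate with constant $\sum_{k\ne0}1/(4\pi^2k^2)=1/12$.) Therefore
\[
D^2\mathcal J(u)(\psi,\psi)\geqslant (D-\kappa)\|\psi_x\|_{L^2}^2+\|\psi\|_{L^2}^2 .
\]
So we may take $D_{max}(\kappa)=\kappa$, or $\kappa/12$ with the sharper constant. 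For $D>D_{max}$ this makes $\mathcal J$ uniformly (strictly) convex on $W^{1,2}(\T)$.

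\textbf{Main obstacle and consequences.} The only delicate step is that the destabilizing term must be bounded independently of $u$. A naive estimate using $\|p\|_{L^\infty}$ fails, because $p$ can concentrate. The variance formulation avoids this: it needs only that $p$ is a probability measure, together with an $L^\infty$ bound on the mean-zero part of $\psi$.

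To make the second-variation argument rigorous, one must check that $\mathcal J$ is $C^2$ on $W^{1,2}(\T)$. This follows from the embedding $W^{1,2}(\T)\subseteq L^\infty(\T)$, which makes $u\mapsto\log\intT e^u$ smooth.

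As a consequence, strict convexity means there is a unique critical point, and it must be $\overline U\equiv\kappa$. This is what the second bullet of Theorem~\ref{thm:ExistenceStationary} requires. Finally, one should confirm that $D_{max}(\kappa)>D_{min}(\kappa)$, or otherwise enlarge $D_{max}$ so that this holds.
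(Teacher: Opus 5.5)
Your proof is correct, and it follows a genuinely different and considerably shorter route than the paper's.

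\textbf{The paper's route.} The paper expands $\psi$ in the Fourier basis and truncates to $\psi_n$. It bounds the Hessian matrix $H^n(\varphi)$ only entrywise via \eqref{eq:CoefEstim}, namely $h_{ii}\geqslant 1+D\mu_i-2\kappa$ and $|h_{ij}|\leqslant 4\kappa$. It then proves that all leading principal minors are positive through a Leibniz-expansion perturbation estimate requiring $\sum_{i\geqslant 1}h_{ii}^{-1}<(8\kappa)^{-1}$, and finally passes to the limit $\psi_n\to\psi$. This yields the threshold $D_{max}=15\kappa/\mu_1\approx 0.38\,\kappa$, together with the side condition $1+D\mu_1>2\kappa$.

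\textbf{Your route.} You exploit the structure of the nonlocal term. The second variation of $u\mapsto\log\intT e^u$ is the variance of $\psi$ under the probability density $p=e^u/\intT e^u$. This variance is invariant under adding constants and is bounded by $\|\psi-\intT\psi\|_{L^\infty}^2$, with no dependence on $u$. The one-dimensional estimate $\|\psi-\intT\psi\|_{L^\infty}^2\leqslant\frac{1}{12}\|\psi_x\|_{L^2}^2$ then closes the argument.

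\textbf{What each approach buys.} Your argument needs no truncation, no limiting procedure, and no Sylvester criterion or determinant combinatorics. It gives a better explicit threshold, $D_{max}=\kappa/12$. It also gives the quantitative bound $D^2\mathcal J(u)(\psi,\psi)\geqslant\|\psi\|_{L^2}^2$, uniform in $u$. That is strict (indeed uniform) convexity, not merely nonnegativity of the second variation. The paper's entrywise approach uses less of the problem's structure and gains nothing in exchange here.

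\textbf{Consequences for Theorem~\ref{thm:ExistenceStationary}.} Strict convexity shortens the uniqueness part of the theorem. The paper must combine convexity with a separate local strict-convexity estimate at $\overline U$ and the connectedness of the set of minimizers. Your bound shows directly that $\overline U$ is the only critical point. The ordering you raise at the end is automatic: $D_{min}\leqslant D_{max}$. For any $D>D_{max}$, convexity makes the critical point $\overline U$ a global minimizer. That is incompatible with $\inf\mathcal J<-\kappa^2/2$, which Lemma~\ref{thm:ExistenceOfMinus} gives for $D<D_{min}$.
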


\begin{proof}
    It is sufficient to prove that the second derivative $D^2\mathcal J (\varphi) (\psi, \psi)$ given by the formula \eqref{eq:Ders} is nonnegative 
    for all $\varphi, \psi \in W^{1,2}(\T)$ (see \textit{e.g.} \cite[Corollary 3.8.6]{constantin2019convex}).
    First, fix functions $\varphi,\psi\in W^{1,2}(\T)$ and set 
    \begin{AlignedEquation}
        \psi = \sum_{k=0}^\infty R_k \FL_k(x) \quad \text{and} \quad \psi_n = \sum_{k=0}^n R_k \FL_k(x) \quad \text{with} \quad R = (R_0, \ldots) \in \omega_\EL^2.      
    \end{AlignedEquation} 
    Since $\{\FL_k\}_{k=0}^\infty$ is an orthonormal basis of $L^2(\T)$ and an orthogonal basis of $W^{1,2}(\T)$, we have $\psi_n \to \psi$ in $L^2(\T)$ and $W^{1,2}(\T)$.
    In particular, by embedding $W^{1,2}(\T)\subseteq L^\infty(\T)$ there holds
    \begin{AlignedEquation}
        \sup_{n\in\N_0} \|\psi_n\|\Li \leqslant C\sup_{n\in\N_0} \|\psi_n\|_{W^{1,2}} \leqslant C \|\psi\|_{W^{1,2}} < \infty
    \end{AlignedEquation} 
    which implies
    \begin{AlignedEquation}
        \left|\intT e^\varphi (\psi_n^2 - \psi^2) dx\right| \leqslant e^{\|\varphi\|\Li} \left( \|\psi_n\|\Lt^2 - \|\psi\|\Lt^2 \right) \leqslant C \|\psi_n - \psi\|\Lt \|\psi_n + \psi\|\Lt \leqslant C \|\psi_n - \psi\|\Lt.
    \end{AlignedEquation}
    Similarly using $\|\psi\|\Lo\leqslant\|\psi\|\Lt$ we obtain
    \begin{AlignedEquation}
        \left| \left( \intT e^\varphi \psi_n dx \right)^2 - \left( \intT e^\varphi \psi dx \right)^2 \right| \leqslant e^{2\|\varphi\|\Li} \left( \|\psi_n\|\Lo^2 - \|\psi\|\Lo^2 \right) \leqslant C \|\psi_n - \psi\|\Lt.
    \end{AlignedEquation}
    Now since $\psi_n \to \psi$ in $W^{1,2}(\T)$ passing to the limit yields $D^2\mathcal{J}(\varphi)(\psi_n,\psi_n) \to D^2\mathcal{J}(\varphi)(\psi,\psi)$ and hence it is enough to prove $D^2\mathcal J(\varphi)(\psi_n,\psi_n) \ge 0$ for all $n\in\N_0$.

    For each $n\in\N$, the derivative defines a symmetric quadratic form $D^2\mathcal J(\phi)(\psi_n,\psi_n) = \langle H^nR, R\rangle$ for $R\in \R^{n+1}$. The coefficients of the associated matrix $H^n(\varphi) = (\mathcal{J}_{\FL_i\FL_j}(\varphi))_{i,j=0}^n\in \R^{(n+1)\times (n+1)}$ are given by formulas \eqref{eq:SecDer0} and \eqref{eq:HesSecDer} and by inequalities \eqref{eq:CoefEstim} they satisfy $h_{ii} \geqslant 1+D\EL_i - 2\kappa$ and $|h_{ij}| \leqslant 4\kappa$ for all $i,j\in\N$.
    Since $H^n(\varphi)$ is symmetric, Sylvester's criterion dictates that it suffices to show
    \begin{AlignedEquation}
        \det \left( H^n(\varphi) \right) > 0 \quad \text{for each} \quad n\in \N_0
    \end{AlignedEquation}
    in order to obtain that $H^n(\varphi)$ is positive definite for all $n\in\N_0$.
    As an immediate consequence of \eqref{eq:CoefEstim} we have $\det(H^0(\varphi)) = 1$ and $\det (H^n(\varphi)) = \det (\mathcal{J}_{\FL_i, \FL_j}(\varphi))_{i,j=1}^n$. 
    In the following we assume that diffusion is sufficiently large, namely $1+D\EL_1 - 2\kappa>0$, implying $h_{ii}>0$ for all $i\in\N$.
    Let us define a family of subsets $\mathcal{A}_k = \{A \subset \{1, \ldots, n\} : |A| = k\}$. Using the Leibniz formula for determinants and the estimates for the coefficients of $H^n(\varphi)$, we can write
    \begin{AlignedEquation}
        \label{eq:SylvDet}
        \det \left( H^n(\varphi) \right) = \sum_{\sigma \in S^n} \left( {\rm sgn}(\sigma) \prod_{i = 1}^n h^n_{i,\sigma(i)} \right) \geqslant \prod_{i=1}^n h_{ii} - \sum_{k = 1}^{n} \left(4\kappa \right)^k k! \left(\sum_{A \in \mathcal{A}_{n-k}} \prod\limits_{j\in A} h_{jj}\right).
    \end{AlignedEquation}
    To obtain the estimate, we count with $k$ the number of off-diagonal elements in $\prod_{i = 1}^n h^n_{i,\sigma(i)}$ for some $\sigma\in S^n$ and group permutations with the same number of off-diagonals together.
    The first term $\prod_{i=1}^n h_{ii}$ corresponds to $\sigma = id$ and thus $k=0$.
    The product of $k$ off-diagonal elements is estimated by $(4\kappa)^k$ and multiplied by $k!$ as there are $k!$ possibilities to choose the order of these $k$ off-diagonal elements.
    The remaining term $\sum_{A \in \mathcal{A}_{n-k}} \prod\limits_{j\in A} h_{jj}$ corresponds to the sum of all possible choices of building the product of $n-k$ diagonal elements. Dividing equation \eqref{eq:SylvDet} by the product of all diagonal elements we obtain that the determinants are positive if
    \begin{AlignedEquation}
        1 > \sum_{k = 1}^{n} (4\kappa )^k k! \left(\sum_{A \in \mathcal{A}_{k}} \prod\limits_{j\in A} \frac{1}{h_{jj}}\right) \qtext{for all} n\in\N.
    \end{AlignedEquation}
    Since by inequalities \eqref{eq:CoefEstim}, the diagonal elements are bounded from below, we obtain that for sufficiently large $D$, the series $\sum (h_{ii})^{-1}$ is bounded
    \begin{AlignedEquation}
        \sum_{i=1}^n \frac{1}{h_{ii}} \le \sum_{k=1}^\infty\frac{1}{(1 + D\EL_k - 2\kappa)} = C(D) \xrightarrow{D\to\infty} 0. 
    \end{AlignedEquation}
    Hence, it holds
    \begin{AlignedEquation}
        \sum_{A \in \mathcal{A}_{k}} \prod\limits_{j\in A} \frac{k!}{h_{jj}} \leqslant \left(\sum_{i=1}^n \frac{1}{h_{ii}}\right)^k \le C(D)^k.
    \end{AlignedEquation}
    Choosing $D_{max}>(2\kappa-1)/\EL_1$ sufficiently large such that $C(D_{max}) < (8\kappa )^{-1}$,  we obtain
    \begin{AlignedEquation}
        \sum_{k = 1}^{n} (4\kappa)^k\cdot  \left(\sum_{A \in \mathcal{A}_{k}} \prod\limits_{j\in A} \frac{k!}{h_{jj}}\right) \leqslant \sum_{k = 1}^{n} \left(\frac{4\kappa}{8\kappa }\right)^k < 1 \qquad \text{for all} \qquad D > D_{\max}.
    \end{AlignedEquation}
    Thus, for $D > D_{\max}$, the matrix $H^n(\varphi)$ is positive definite for all $n\in\N_0$ and for all $\varphi\in W^{1,2}(\T)$.
    It follows $D^2\mathcal{J}(\varphi)(\psi,\psi)\geqslant 0$ for all $\varphi,\psi\in W^{1,2}(\T)$ for $D>D_{max}$ and the functional $\mathcal{J}$ is globally convex.
\end{proof}

\begin{remark}
    Assume $D > a \kappa$ where $a>0$ is a constant to be determined later on. Then we have 
    \begin{AlignedEquation}
        \sum_{k=1}^\infty \frac{1}{1+D\EL_k - 2\kappa} \leqslant \sum_{k=1}^\infty \frac{1}{1+\kappa(a\EL_k - 2)} \leqslant \frac{1}{\kappa}\sum_{k=1}^\infty \frac{1}{(a\EL_k - 2)}.
    \end{AlignedEquation}
    Here we assume $a >2/\EL_1$ to ensure that all terms are positive and the series converges. By the direct computation we obtain that
    \begin{AlignedEquation}
        \sum_{k=1}^\infty \frac{1}{(a\EL_k - 2)} = \frac{1}{4}\left(1-\frac{\sqrt{2} \pi \cot \left(\frac{\sqrt{2} \pi}{\sqrt{a}}\right)}{\sqrt{a}}\right)
    \end{AlignedEquation}
    and the condition
    $\sum_{k=1}^\infty (a\EL_k - 2)^{-1} \leqslant {1}/{8}$ is satisfied for $a>15/\EL_1$.
    This means, we could choose $D_{max}(\kappa) = 15 \kappa / \EL_1$ in Lemma \ref{thm:ZeroOnly}.
\end{remark}

\subsection{Existence of spatial patterns}
We finish this section by proving the existence of  nonconstant solutions to problem \eqref{equ_varphi_Stat} by incorporating the geometry of the functional $\mathcal{J}$. It is clear that each critical point of the functional $\mathcal{J}$ corresponds to a weak solution of problem \eqref{equ_varphi_Stat}.

\begin{proof}[Proof of Theorem \ref{thm:ExistenceStationary}]
    First we show the existence of nonconstant stationary solutions. Combining Lemma \ref{thm:Coercive} and Lemma \ref{thm:WeaklyLower} with a classical extreme value theorem, \cite[Thm 6.2.8]{drabek2013methods}, we conclude that there exists a minimizer $U\in W^{1,2}(\T)$ satisfying $\inf \mathcal{J} = \mathcal{J}(U)$ and the weak formulation of \eqref{equ_varphi_Stat}. By Lemma \ref{thm:ExistenceOfMinus} we immediately obtain $\mathcal{J}(U) < \mathcal{J}(\overline U)$ for $D<D_{min}$ and since there exists only one constant critical point of $\mathcal J$ (only one constant solution to equation \eqref{equ_varphi_Stat}) we immediately conclude that $U(x)$ is nonconstant.   
    {As $U$ is a local minimum of $\mathcal{J}$, the steady state $U$ is Lyapunov stable in $L^2(\T)$.}
    
    Next we show the uniqueness of the stationary solutions for $D>D_{max}$. As by Lemma \ref{thm:ZeroOnly} the functional $\mathcal{J}$ is globally convex, every local minimum is already a global minimum.
    Furthermore, the set of all minima forms a convex set and is, in particular, connected (see \textit{e.g.} \cite[Theorem 3.10.1]{constantin2019convex}).
    Since $\overline U \equiv \kappa$ is a critical point of $\mathcal J$, formula \eqref{eq:Ders} together with Poincare inequality $\EL_1\intT (\psi - \int\psi)^2 \leqslant\int\psi_x^2$ yields
    \begin{AlignedEquation}
        D^2\mathcal J(\overline{U})(\psi, \psi) 
        &= D\intT \psi_x^2 + \intT \psi^2 -\kappa\intT\psi^2 + \kappa \left(\intT\psi\right)^2 \\
        &\geqslant 
        (1 + D\EL_1 -\kappa)\intT \psi^2  
        + (\kappa - D\EL_1) \left(\intT \psi\right)^2 \\
        &\geqslant \intT \psi^2 + ( D\EL_1 - \kappa)\left(\int\psi^2 - \left(\int\psi\right)^2\right).
    \end{AlignedEquation}
    Hence since $0\leqslant \left(\int\psi^2 - \left(\int\psi\right)^2\right) \leqslant \int\psi^2$ we have 
    \begin{AlignedEquation}
        D^2\mathcal J(\overline{U})(\psi, \psi) \geqslant \begin{cases}
            \int\psi^2 & \text{for } (D\EL_1 - \kappa) >0, \\
            (1+D\EL_1-\kappa)\int\psi^2 & \text{for } (D\EL_1 - \kappa) <0.
        \end{cases}
    \end{AlignedEquation}
    Since we consider $D>D_{max}$, 
    % where $D_{max}$ is chosen such that $C(D_{max})<(8\kappa)^{-1}$, 
    it is clear that $1 + D\EL_1 -\kappa > 0$.
    We will later see in Proposition \ref{thm:Stab_constant_steady_state} that the constant steady state $\overline{U}$ is stable exactly when $1 + D\EL_1 -\kappa > 0$ holds. Thus, this condition is really necessary to prove convexity of $\mathcal{J}$. By the strict positivity of $D^2\mathcal{J}(\overline U)(\psi, \psi)$, it follows that $\mathcal{J}$ is strictly convex in a small neighborhood of $\overline U$ and in this neighborhood, $\overline{U}$ is the unique minimum of $\mathcal{J}$.
    Since in case of multiple minima, they would be connected, $\overline U$ is the unique critical point of $\mathcal{J}$ and hence a unique solution to equation~\eqref{equ_varphi_Stat}. {Moreover, as the unique critical point and global minimum of $\mathcal{J}$, the constant steady state $\overline U$ is globally asymptotically stable in $L^2(\T)$.}
\end{proof}

\section{Linear stability analysis}\label{sec:Stability}

In this section, we examine the linear stability of stationary solutions to problem \eqref{equ_varphi}, which allows us to identify dynamically admissible patterns. 
Let $U$ be a solution of equation \eqref{equ_varphi_Stat}. To determine the linear stability of $U$, we insert $u(x,t) = U(x) + e^{\ENS t} \varphi(x)$ into \eqref{equ_varphi} and linearize about $U$, which leads to the eigenvalue problem
\begin{AlignedEquation}\label{eq:Nonlocal_eigv_problem}
    D \varphi_{xx} + \left(\kappa\frac{e^U}{\intT e^U \dy} - 1 - \ENS\right) \varphi = \kappa \frac{e^U}{(\intT e^U \dy)^2} \intT e^U \varphi \, \dy
\end{AlignedEquation}
supplemented with the boundary conditions $\varphi(0) = \varphi(1)$ and $\varphi_x(0) = \varphi_x(1)$.
Throughout our analysis, we employ the notation $\ENS$ to denote the eigenvalues of this nonlocal problem. Later, we introduce $\ELS$ to represent the eigenvalues of the corresponding local problem obtained by setting the right-hand side of \eqref{eq:Nonlocal_eigv_problem} to zero.

\subsection{Constant steady state}\label{sec:Stability_constant}

First we consider stability of the constant steady state.
% By Theorem~\ref{thm:ExistenceStationary}, we know that
Clearly, $\overline U \equiv \kappa$ is the unique constant steady state of problem \eqref{equ_varphi_Stat} which exists for all parameters $D$, $\kappa$.
To investigate its stability, we insert $\overline U \equiv \kappa$ in \eqref{eq:Nonlocal_eigv_problem} and obtain the problem
\begin{AlignedEquation}
    \label{eq:EigCst}
    D \varphi_{xx} + \left(\kappa - 1 - \ENS\right) \varphi = \kappa  \intT \varphi \, d y.
\end{AlignedEquation}

\begin{proposition}\label{thm:Stab_constant_steady_state}
    The constant solution $\overline U\equiv\kappa$ is stable for $\kappa < 1+\EL_1 D$ and unstable for $\kappa > 1+\EL_1 D$.
\end{proposition}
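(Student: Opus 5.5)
The plan is to diagonalize the nonlocal eigenvalue problem \eqref{eq:EigCst} in the Laplacian eigenbasis of Proposition~\ref{prop:Spectral}. Because $\overline U\equiv\kappa$ is constant, the nonlocal term $\kappa\intT\varphi\,dy$ only acts on the mean. Hence the operator on the left of \eqref{eq:EigCst} is a Fourier multiplier, and the spectrum can be computed explicitly.

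\textbf{Computing the spectrum.} Write $\varphi=\sum_k R_k\FL_k$. Testing \eqref{eq:EigCst} against $\FL_0=1$ and using $\intT\varphi_{xx}=0$ by periodicity gives
\[
(\kappa-1-\ENS)R_0=\kappa R_0 ,
\]
so any eigenfunction with $R_0\neq 0$ has $\ENS=-1$. This is consistent with the mass equation \eqref{eq:MassDynamic}. Testing against $\FL_k$ for $k\geq1$, orthogonality gives
\[
(-D\EL_k+\kappa-1-\ENS)R_k=0 ,
\]
so the remaining eigenvalues are $\ENS_k=\kappa-1-D\EL_k$, with eigenfunctions $\sqrt2\cos(2\pi kx)$ and $\sqrt2\sin(2\pi kx)$. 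Since $\EL_k=4\pi^2k^2$ is increasing in $k$, the largest of these is $\ENS_1=\kappa-1-D\EL_1$. The spectrum is therefore $\{-1\}\cup\{\kappa-1-D\EL_k:k\geq1\}$, which accumulates only at $-\infty$.

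\textbf{Linear stability.} If $\kappa<1+\EL_1D$, then every eigenvalue satisfies $\ENS\leq\max(-1,\ENS_1)<0$, so $\overline U$ is linearly stable. If $\kappa>1+\EL_1D$, then $\ENS_1>0$ and $\overline U$ is linearly unstable.

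\textbf{Nonlinear stability.} To pass to the nonlinear statements I would use the gradient-flow structure of \eqref{equ_varphi} with respect to $\mathcal J$. The Hessian $D^2\mathcal J(\overline U)$ is exactly the negative of the linearized operator. In the basis $\FL_k$ it is diagonal, with entries $1$ on $\FL_0$ and $1+D\EL_k-\kappa$ on $\FL_k$, by \eqref{eq:SecDer0}–\eqref{eq:HesSecDer} at $p\equiv1$.
\begin{itemize}
\item[(a)] \emph{Stable case.} When $\kappa<1+\EL_1D$, the Hessian is uniformly positive on $W^{1,2}(\T)$; the coercivity estimate is already in the proof of Theorem~\ref{thm:ExistenceStationary}. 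So $\overline U$ is a strict local minimizer of $\mathcal J$, and Lyapunov stability follows from $\mathcal J$ being a Lyapunov functional for the flow. Alternatively, one can apply the principle of linearized stability for semilinear parabolic equations, since the nonlinearity $u\mapsto\kappa e^u/\intT e^u$ is smooth on $W^{1,2}(\T)\subset L^\infty(\T)$.
\item[(b)] \emph{Unstable case.} When $\kappa>1+\EL_1D$, we have $\mathcal J(\overline U+s\FL_1)<\mathcal J(\overline U)$ for small $s\neq0$. By the energy argument, trajectories starting there cannot converge back to $\overline U$. Alternatively, invoke the standard instability theorem given a positive eigenvalue.
\end{itemize}

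\textbf{Main obstacle.} The spectral computation is routine. The real care lies in justifying that the linear spectral conclusion transfers to the nonlinear dynamics in the $W^{1,2}$/$L^2$ setting of Proposition~\ref{prop:ExistenceGlobal}. I would handle this by citing the principle of linearized stability for analytic semigroups, since $D\partial_{xx}$ is sectorial with compact resolvent, together with the Fréchet differentiability of the nonlocal term.
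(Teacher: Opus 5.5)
Your proposal is correct and follows the paper's own proof: expand in the Laplacian eigenbasis, use that the nonlocal term $\kappa\int_0^1\varphi\,dy$ only sees the mean, and read off $\nu_0=-1$ and $\nu_k=\kappa-1-D\mu_k$, with $\nu_1$ the largest. Your testing argument also makes explicit that no other eigenvalues occur, which the paper only asserts.

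The paper proves only this linear statement, so your nonlinear part is an extra. In (b), the energy argument alone only rules out convergence back to $\overline U$, so it is the cited instability theorem that actually carries the nonlinear claim there.
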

\begin{proof}
The spectrum of \eqref{eq:EigCst} is fully characterized by the 
set of eigenfunctions of the Laplace operator (see Proposition \ref{prop:Spectral}). Choosing  $\varphi\in\{\FL_k\}_{k=0}^\infty$ and using the fact that $\intT \FL_k(x) dx = 0$ for $k\geqslant 1$, we find that the  eigenvalues of the linearized nonlocal operator \eqref{eq:EigCst} are
\begin{AlignedEquation}
    \ENS_k = \begin{cases}
        -1& \text{for} \ k = 0, \\
        \kappa - 1 - D\EL_k & \text{for} \ k \geqslant 1.
    \end{cases}
\end{AlignedEquation}
Notice, that since 
% $\ENS_0 < 0$ and 
$\ENS_1 > \ENS_k$ for all $k\ge2$, the claim follows.
\end{proof}

\subsection{Local and nonlocal Sturm-Liouville}
We now turn our attention to nonconstant steady states $U$.
By Proposition \ref{prop:ExistenceGlobal}, we know that $U\in W^{2,2}(\T)$.
To analyze the spectrum, we compare the eigenvalues of the linearized operator with those arising from the associated local periodic Sturm–Liouville problem. Our approach builds upon the methods developed in \cite{bose1998stability, coddington1956theory}, which we adapt to the setting of periodic boundary conditions.

In equation \eqref{eq:Nonlocal_eigv_problem}, introducing the notation
\begin{AlignedEquation}\label{eq:definition_A_C_M}
    A(x) = \kappa\frac{e^{U(x)}}{\intT e^{U(y)} d y} - 1,\quad M = \frac{\kappa}{\left(\intT e^{U(y)} d y\right)^2}>0,\quad C(x) = e^{U(x)}
\end{AlignedEquation}
transforms the equation into the eigenvalue problem
\begin{AlignedEquation}\label{eq:Nonlocal_problem}
    \begin{gathered}
        D \varphi_{xx} + (A(x) - \ENS) \varphi = M C(x) \intT C(y) \varphi(y) d y,
    \end{gathered}
\end{AlignedEquation}
supplemented with periodic boundary conditions $\varphi(0) = \varphi(1)$ and $\varphi_x(0) = \varphi_x(1)$.
We consider the linearization $\mathcal{L}$ around $U$ as operator on $L^2(\T)$, i.e.\
\begin{align*}
    \mathcal{L}: \mathcal{D}(\mathcal{L}) = W^{2,2}(\T) \subset L^2(\T) \to L^2(\T),\quad \mathcal{L}\varphi = D \varphi_{xx} + A(x) \varphi - M C(x) \intT C(y) \varphi(y) d y.
\end{align*}
Note that the functions $A$ and $C$ are bounded in $x\in\T$ as $U\in W^{2,2}(\T) \subset L^\infty(\Omega)$.
The stability of $U$ is then determined by the sign of the real part of the eigenvalues by Lemma \ref{lem:PropSpecL}: if $\Re\ENS < 0$ for all eigenpairs $(\ENS, \phi)$ of \eqref{eq:Nonlocal_problem}, then $U$ is linearly stable; conversely, if there exists an eigenvalue $\ENS$ with $\Re\ENS > 0$, then $U$ is linearly unstable.
\begin{lemma}\label{lem:PropSpecL}
    The spectrum of $\mathcal{L}$ is discrete, i.e. consists only of isolated eigenvalues of the eigenvalue problem \eqref{eq:Nonlocal_problem} with finite multiplicities. The eigenvectors of $\mathcal{L}$ are an orthogonal basis of $L^2(\T)$.
    Moreover, the all eigenvalues are real and countable.
\end{lemma}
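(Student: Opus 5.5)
We will prove Lemma~\ref{lem:PropSpecL} by writing $\mathcal{L}$ as a self-adjoint operator with compact resolvent plus a bounded self-adjoint rank-one perturbation. Set
\[
\mathcal{L}_0\varphi = D\varphi_{xx} + A(x)\varphi, \qquad \mathcal{D}(\mathcal{L}_0)=W^{2,2}(\T),
\]
and $K\varphi = M C(x)\intT C(y)\varphi(y)\,dy$, so that $\mathcal{L}=\mathcal{L}_0-K$. Since $U\in W^{2,2}(\T)\subset L^\infty(\T)$, the potential $A$ is bounded and real. Hence $\mathcal{L}_0$ is a bounded symmetric perturbation of the periodic Laplacian $D\partial_{xx}$, and it is self-adjoint on $W^{2,2}(\T)$ (Kato--Rellich, or directly from the Fourier description of Proposition~\ref{prop:Spectral}). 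The operator $K$ has kernel $M C(x)C(y)$, which is real and symmetric, and $C\in L^\infty\subset L^2$. So $K$ is bounded, self-adjoint and of rank one on $L^2(\T)$. It follows that $\mathcal{L}$ is self-adjoint on $\mathcal{D}(\mathcal{L})=W^{2,2}(\T)$, and in particular all its eigenvalues are real. This self-adjointness is exactly the feature that the exponential choice $E(u)=e^{-\beta u}$ was designed to give.

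Next we show discreteness. Take $\mu>\|A\|_{L^\infty}+\|K\|$. Then $\mu-\mathcal{L}$ is invertible from $L^2(\T)$ onto $W^{2,2}(\T)$. To see this, note that for $\varphi\in W^{2,2}(\T)$,
\[
\langle(\mu-\mathcal{L})\varphi,\varphi\rangle \geqslant D\|\varphi_x\|^2_{L^2} + (\mu-\|A\|_\infty-\|K\|)\|\varphi\|^2_{L^2}.
\]
By Lax--Milgram this gives a weak solution in $W^{1,2}(\T)$, and elliptic regularity upgrades it to $W^{2,2}(\T)$ because the equation reads $D\varphi_{xx}=(\text{$L^2$ function})$. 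The resolvent therefore maps $L^2$ boundedly into $W^{2,2}(\T)$, and that space embeds compactly into $L^2(\T)$ by Rellich--Kondrachov on the one-dimensional torus. So the resolvent is compact and self-adjoint.

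The spectral theorem for compact self-adjoint operators now applies to $(\mu-\mathcal{L})^{-1}$. Its spectrum consists of countably many real nonzero eigenvalues accumulating only at $0$, each of finite multiplicity, and its eigenvectors form an orthonormal basis of $L^2(\T)$. Transferring back via $\nu=\mu-1/\tau$, the spectrum of $\mathcal{L}$ consists of countably many isolated real eigenvalues of finite multiplicity tending to $-\infty$, with the same orthogonal eigenbasis. Each eigenfunction lies in $W^{2,2}(\T)$ and satisfies \eqref{eq:Nonlocal_problem} with the periodic boundary conditions.

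The only step needing some care is the self-adjointness of $\mathcal{L}$ with its domain; one must check that the nonlocal term does not spoil it. It does not, because $K$ is bounded and symmetric, so $\mathcal{L}^*=\mathcal{L}_0^*-K^*=\mathcal{L}_0-K$ with the same domain. We will also remark that the eigenvalues can be ordered as $\nu_1\geqslant\nu_2\geqslant\dots\to-\infty$. This ordering is what the subsequent stability criterion based on the sign of $\Re\ENS$ uses.
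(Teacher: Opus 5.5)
Your proof is correct and follows the same route as the paper. The paper also establishes self-adjointness of $\mathcal{L}$, builds a compact resolvent via Lax--Milgram and a compact Sobolev embedding, and applies the spectral theorem for compact self-adjoint operators to transfer eigenvalues and an orthogonal eigenbasis back to $\mathcal{L}$. You make the self-adjointness argument more explicit by splitting off the bounded symmetric rank-one term $K$, and you pass through $W^{2,2}(\T)$ via elliptic regularity where the paper uses $W^{1,2}(\T)\hookrightarrow L^2(\T)$ directly; neither changes the argument.
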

\begin{proof}
    The operator $\mathcal{L}$ is self-adjoint with respect to the $L^2$-scalar product which can be seen using the specific form of the right-hand-side.
    Thus, all eigenvalues are real.
    Moreover, $\mathcal{L}$ has a compact resolvent for $\lambda \in\rho(\mathcal{L})$.
    This is proven by constructing the resolvent $(\lambda I - \mathcal{L})^{-1}$ using the Lax-Milgram Theorem and proving compactness using the compact embedding $W^{1,2}(\T) \hookrightarrow L^2(\T)$.
    Hence, the spectrum of $\mathcal{L}$ consists entirely of isolated eigenvalues.
    As $L^2(\T)$ is a separable Hilbert space, the spectral theorem for compact self-adjoint operators states that a compact, self-adjoint operator has a basis of eigenvectors.
    Since $\mathcal{L}$ and the compact operator $(\lambda I - \mathcal{L})^{-1}$ share the same eigenvectors, the eigenvectors of $\mathcal{L}$ are a orthogonal basis of $L^2(\T)$.
\end{proof}
Problem \eqref{eq:Nonlocal_problem} is closely related to the periodic Sturm–Liouville problem, apart from the presence of the nonlocal term on the right-hand side. Indeed, the eigenvalues $\ENS$ of \eqref{eq:Nonlocal_problem} can be related to solutions of the associated Sturm–Liouville problem
\begin{AlignedEquation}\label{eq:Local_problem}
\begin{gathered}
    D \psi_{xx} + (A(x) - \ELS) \psi = 0,
\end{gathered}
\end{AlignedEquation}
together with the boundary conditions $\psi(0) = \psi(1)$ and $\psi_x(0) = \psi_x(1)$. 
To distinguish the eigenpairs of both problems 
% \eqref{eq:Nonlocal_problem} and \eqref{eq:Local_problem} 
we use $(\ENS,\FNS)$ for solutions of \eqref{eq:Nonlocal_problem} and $(\ELS, \FLS)$ for solutions of \eqref{eq:Local_problem}.

The local periodic Sturm-Liouville problem was extensively studied in the past (see. \textit{e.g.} {\cite{constanda2010,lebovitz}}). Here we recall the well-known result for completeness of the exposition.
\begin{proposition}
    \label{thm:Eigenvalues_local_problem}
The solutions of \eqref{eq:Local_problem} are given by pairs $\{(\ELS_n, \FLS_n)\}_{n\in\N_0}$, where the set $\{\FLS_n\}_{n\in\N_0}$ is an orthonormal basis of $L^2(0,1)$.
The eigenvalues satisfy
\begin{AlignedEquation}
    \ELS_{0} > \ELS_{1} \geqslant \ELS_{2} > \ldots > \ELS_{2j+1} \geqslant \ELS_{2j+2} > \ELS_{2(j+1)+1} \geqslant \ELS_{2(j+1)+2} > \ldots.
\end{AlignedEquation}
% for $j = 0, 1, \dots$.
The corresponding eigenfunctions $\FLS_{2j+1}$, $\FLS_{2j+2}$ have exactly $2j+2$ zeros in the interval $[0,1)$. In particular $\FLS_0$ does not vanish on $[0,1]$ and the eigenvalue $\ELS_0$ is simple. 
\end{proposition}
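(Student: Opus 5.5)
The statement is the classical periodic Sturm--Liouville (Hill's equation) theorem applied to $D\psi_{xx} + A(x)\psi = \ELS\psi$. Our plan is to reduce it to the standard theory of Hill's equation with the sign convention reversed. Since $U\in W^{2,2}(\T)\subset C(\T)$, the coefficient $A$ is continuous and $1$-periodic.

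Setting $\mu=-\ELS/D$ and $q=A/D$, the problem becomes $-\psi'' - q\psi = \mu\psi$ with periodic boundary conditions. This is a regular self-adjoint problem. We will verify self-adjointness by integrating by parts with the periodic boundary terms. A compact resolvent then follows exactly as in Lemma~\ref{lem:PropSpecL}, this time without the rank-one term. Hence the eigenfunctions form an orthonormal basis of $L^2(0,1)$ and the eigenvalues are real, can be ordered, and accumulate only at $\ELS\to-\infty$.

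For the ordering and the zero counts we will invoke the oscillation theory of Hill's equation (Coddington--Levinson, or Magnus--Winkler). We introduce the fundamental solutions $y_1,y_2$ normalized at $x=0$ and the discriminant $\Delta(\ELS)=y_1(1)+y_2'(1)$. Periodic eigenvalues are exactly the roots of $\Delta(\ELS)=2$. The key steps, in order, are:
\begin{enumerate}
\item Show that $\Delta$ is entire in $\ELS$ and that, as a function of $\mu=-\ELS/D$, it oscillates between values $\ge 2$ and $\le -2$. The bands separate successive pairs of periodic eigenvalues, with antiperiodic eigenvalues lying between them.
\item Conclude that the largest periodic eigenvalue $\ELS_0$ is simple, while the subsequent ones come in pairs $\ELS_{2j+1}\ge\ELS_{2j+2}$. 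Equality in a pair is exactly a coexistence (closed gap).
\item Identify zeros via Sturm comparison, using the Prüfer angle. The Prüfer angle increases monotonically in $\mu$, and at the $j$-th pair of periodic eigenvalues its total increment over $[0,1]$ equals $2(j+1)\pi$. This gives exactly $2j+2$ zeros on $[0,1)$ for $\FLS_{2j+1}$ and $\FLS_{2j+2}$, and $0$ zeros for $\FLS_0$.
\end{enumerate}

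To make the result for $\FLS_0$ self-contained, we will use a variational argument. $\ELS_0$ maximizes the Rayleigh quotient $\int(-D\psi_x^2 + A\psi^2)/\int\psi^2$. If $\psi$ is a maximizer, then $|\psi|$ is also a maximizer, hence an eigenfunction. By the strong maximum principle, or uniqueness for the ODE (a nonnegative solution vanishing at a point has $\psi'=0$ there), $|\psi|$ has no zeros. Two eigenfunctions that are each of constant sign cannot be $L^2$-orthogonal, so $\ELS_0$ is simple.

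The main obstacle is the precise zero count and the strictness pattern ($>$ between pairs, $\ge$ within a pair). This relies on the discriminant and rotation-number argument, which is delicate to reproduce in full. We will therefore cite it from the standard periodic Sturm--Liouville literature \cite{coddington1956theory,constanda2010,lebovitz} rather than reprove it. We will only check that our hypotheses — continuous periodic $q$ and constant leading coefficient $D>0$ — match those references after the sign change $\ELS\mapsto-\ELS/D$, which reverses the usual increasing order into the decreasing order stated.
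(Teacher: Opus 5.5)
Your proposal is correct and takes essentially the paper's approach. The paper gives no proof of this proposition; it simply recalls the classical periodic Sturm--Liouville (Hill's equation) theorem from the literature. Your sign change $\mu=-\ELS/D$, $q=A/D$, together with citing the ordering and zero counts, does exactly that, and your Rayleigh-quotient argument for the positivity of $\FLS_0$ and the simplicity of $\ELS_0$ is a valid optional supplement.
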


If an eigenfunction $\FLS_n$ of the local problem \eqref{eq:Local_problem} satisfies $\int_{0}^{1} C(y) \FLS_n(y)d y = 0$, then the pair $(\ELS_n, \FLS_n)$ also solves the nonlocal problem \eqref{eq:Nonlocal_problem} and $\ELS_n$ is a nonlocal eigenvalue.
Since the ordering of the eigenvalues $\ELS_n$ is well understood, we first focus on characterizing the eigenvalues of the nonlocal problem that do not coincide with those of the local problem.
\begin{lemma}\label{lem:equation_for_lambda}
% Assume that there is at least one local eigenvalue $\ELS_n$ which is not a nonlocal eigenvalue.
If a nonlocal eigenvalue $\ENS$ of \eqref{eq:Nonlocal_problem} satisfies $\ENS\notin \lbrace\ELS_n\rbrace_{n=0}^\infty$, then $\ENS$ solves
\begin{AlignedEquation}\label{eq:equation_for_lambda}
    \frac{1}{M} = \sum_{n\in\N_0} \frac{\beta_n^2}{\ENS - \ELS_n},
\end{AlignedEquation}
where $\beta_n = \int_{0}^{1} C(y) \FLS_n(y)d y$. 
Conversely if $\ENS\in\R$ solves \eqref{eq:equation_for_lambda}, then $\ENS$ is an eigenvalue of the nonlocal problem \eqref{eq:Nonlocal_problem}.
\end{lemma}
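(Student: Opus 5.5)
We expand the nonlocal eigenfunction in the orthonormal basis $\{\FLS_n\}_{n\in\N_0}$ of local eigenfunctions from Proposition~\ref{thm:Eigenvalues_local_problem}. Since the local operator $D\partial_{xx}+A(x)$ is self-adjoint with periodic boundary conditions, pairing with $\FLS_n$ turns the nonlocal problem \eqref{eq:Nonlocal_problem} into a diagonal algebraic system. The only coupling left is the scalar quantity $\gamma := \intT C(y)\FNS(y)\,dy$. We then derive \eqref{eq:equation_for_lambda} as the consistency condition for $\gamma$.

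\textbf{Forward direction.} Let $(\ENS,\FNS)$ solve \eqref{eq:Nonlocal_problem} with $\ENS\notin\{\ELS_n\}$. We write $\FNS=\sum_n a_n\FLS_n$, which converges in $L^2$; moreover $\FNS\in W^{2,2}(\T)$.

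We multiply \eqref{eq:Nonlocal_problem} by $\FLS_n$, integrate, and integrate by parts twice; the periodic boundary terms cancel. Using \eqref{eq:Local_problem} this gives
\[
(\ELS_n-\ENS)\,a_n = M\gamma\,\beta_n .
\]
Since $\ENS\neq\ELS_n$, we get $a_n = M\gamma\beta_n/(\ELS_n-\ENS)$.

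Next we compute $\gamma$ by Parseval, noting that $C=e^U\in L^2(\T)$ has coefficients $\beta_n$:
\[
\gamma=\sum_n\beta_n a_n = M\gamma\sum_n \frac{\beta_n^2}{\ELS_n-\ENS}.
\]
If $\gamma=0$, then all $a_n=0$, so $\FNS=0$, which is a contradiction. Hence $\gamma\neq0$, and dividing by $M\gamma$ yields \eqref{eq:equation_for_lambda}, with the sign convention $\ENS-\ELS_n$ after moving terms appropriately. I would check the sign carefully here against the stated form.

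\textbf{Convergence of the series.} The series converges absolutely because $\sum\beta_n^2=\|C\|_{L^2}^2<\infty$. In addition, $|\ENS-\ELS_n|$ is bounded below: $\ENS$ is not an eigenvalue and $\ELS_n\to-\infty$, so $\operatorname{dist}(\ENS,\{\ELS_n\})>0$.

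\textbf{Converse.} Suppose $\ENS\in\R\setminus\{\ELS_n\}$ satisfies \eqref{eq:equation_for_lambda}. We define
\[
\FNS := \sum_n \frac{\beta_n}{\ELS_n-\ENS}\FLS_n .
\]
Equivalently, $\FNS = (\mathcal{L}_0-\ENS)^{-1}C$, where $\mathcal{L}_0=D\partial_{xx}+A$ is the local operator and $\ENS$ lies in its resolvent set. Therefore $\FNS\in W^{2,2}(\T)$, it is periodic, and $(\mathcal{L}_0-\ENS)\FNS = C$.

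It remains to compute $\intT C\FNS = \sum_n\beta_n^2/(\ELS_n-\ENS)$. By \eqref{eq:equation_for_lambda} this equals $1/M$ (up to the same sign convention). Hence
\[
M C\intT C\FNS = C = (\mathcal{L}_0-\ENS)\FNS ,
\]
so $\FNS$ solves \eqref{eq:Nonlocal_problem}. It is nonzero because $C>0$ forces $\beta_0\neq0$.

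\textbf{Main obstacle.} I expect the main difficulty to be the justification steps rather than the algebra. One must check that termwise pairing with $\FLS_n$ is legitimate, which it is via self-adjointness on $W^{2,2}$ with periodic boundary conditions. One must also identify the series with the resolvent, so that $\FNS$ genuinely belongs to the domain. The resolvent identification handles both: $\mathcal{L}_0$ has compact resolvent by the same Lax--Milgram argument as in Lemma~\ref{lem:PropSpecL}, so $(\mathcal{L}_0-\ENS)^{-1}$ maps $L^2(\T)$ into $W^{2,2}(\T)$, and its spectral representation gives exactly the series above.
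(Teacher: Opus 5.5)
Your proposal is correct and is essentially the paper's proof. You expand in the local eigenbasis, pair with $\FLS_n$ to get $a_n=M\gamma\beta_n/(\ELS_n-\ENS)$, close with the consistency condition for $\gamma=\intT C\FNS$, and for the converse take coefficients proportional to $\beta_n/(\ELS_n-\ENS)$. You are somewhat more careful than the paper in deriving $\gamma\neq0$ from $\ENS\notin\{\ELS_n\}$ and in placing $\FNS=(\mathcal{L}_0-\ENS)^{-1}C$ in $W^{2,2}(\T)$.

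Do not hedge on the sign, though. The identity you derive, $1/M=\sum_n\beta_n^2/(\ELS_n-\ENS)$, is the correct one: it is what the paper's own computation gives, it is what the paper later uses in \eqref{eq:equ_lambda_transformed}, and for $U\equiv\kappa$ it returns $\ENS=-1$. The printed form with $\ENS-\ELS_n$ is a typo that differs by an overall sign and cannot be reached by ``moving terms''. Your converse also works only with your sign.
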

\begin{proof}
Let $(\ENS, \FNS)$ be an eigenpair of the nonlocal problem \eqref{eq:Nonlocal_problem}. Assume that $\ENS$ is not a solution of the local problem \eqref{eq:Local_problem}, i.e.
\begin{AlignedEquation}
    J := \intT C(x) \FNS(x) d x \ne 0.
\end{AlignedEquation}
We decompose nonlocal eigenfunction $\FNS$ in terms of the orthonormal eigenbasis $\{\FLS_n\}_{n\in\N_0}$ of the local problem $\FNS(x) = \sum a_n \FLS_n$ with $(a_n)_{n\in\N_0} \in \ell^2(\N_0)$. 
Multiplying equation \eqref{eq:Nonlocal_problem} by $\FLS_m$ and integrating yields
\begin{AlignedEquation}
    &\intT \sum_{n\in\N_0} a_n \FLS_n \big( D  (\FLS_m)_{xx} + (A(x)-\ENS) \FLS_m\big) d x = JM\beta_m, %\\
    %&\qquad\qquad= \intT B(x) \intT C(y) \varphi(y) d y\ \psi_m(x) d x = Jk\beta_m,
\end{AlignedEquation}
where $\beta_n = \intT C(x) \FLS_n(x) d x$ and we have applied integration by parts twice.
Next, adding and substracting $\ELS_m \FLS_m$ and using  orthogonality of the basis $\FLS_n$ together with equation \eqref{eq:Local_problem} for $(\ELS_m, \FLS_m)$, we obtain $a_m(\ELS_m-\ENS) = JM\beta_m$, which is equivalent to
\begin{AlignedEquation}
    a_m = J \frac{M\beta_m}{\ELS_m-\ENS}.
\end{AlignedEquation}
Therefore we can express $J$ in the form
\begin{AlignedEquation}
    J &= \intT C(x) \FNS(x) d x = \sum_{n\in\N_0} a_n \beta_n = J M \sum_{n\in\N_0} \frac{\beta_n^2}{\ELS_n-\ENS}.
\end{AlignedEquation}
Since $J\ne 0$ by assumption, $\ENS$ solves \eqref{eq:equation_for_lambda}.
The series converges for each $\ENS\ne \ELS_n$ as $\ELS_n \sim n^2$ \cite{coddington1956theory, lebovitz} and $\beta_n$ is uniformly bounded.
This proves one direction of the claim.

Now, we fix $\ENS\in\R$ which solves \eqref{eq:equation_for_lambda}. Notice that $\ENS \ne \ELS_n$ for all $n\in\N_0$ and there exist $\beta_n \ne 0$.
% By assumption, we know that at least one such $\beta_n$ exists.
We set $a_n = {M \beta_n}/({\ELS_n - \ENS})$ 
and define the function $\FNS = \sum_{n\in\N_0} a_n \FLS_n$ which is nonzero as $\beta_n\ne0$ for at least one $n\in\N_0$.
Substituting $\varphi$ into the equation \eqref{eq:Nonlocal_problem} and again adding and substracting $\ELS_n\FLS_n$ and utilizing \eqref{eq:Local_problem} we obtain
\begin{AlignedEquation}
    M\sum_{n=0}^\infty \beta_n \FLS_n = MC(x) \sum_{n=0}^\infty a_n \intT C(x) \FLS_n.
\end{AlignedEquation}
Notice that by assumption \eqref{eq:equation_for_lambda} right hand side satisfies
\begin{AlignedEquation}
    \sum_{n=0}^\infty a_n \intT C(x) \FLS_n = \sum_{n=0}^\infty a_n\beta_n =  \sum_{n=0}^\infty M\frac{\beta_n^2}{\nu-\lambda} = 1
\end{AlignedEquation}
and since $\sum \beta_n \FLS_n = C(x)$ the proof is completed.
\end{proof}

\subsection{Relation between eigenvalues}
Next, we establish the location of eigenvalues $\ENS$ of the nonlocal problem \eqref{eq:equation_for_lambda}.
First we decompose the set of local eigenvalues into two subsets 
\begin{AlignedEquation}
    \lbrace \ELS_n \rbrace_{n=0}^\infty = \lbrace \ELS_n: \beta_n  = 0 \rbrace \cup \lbrace \ELS_n: \beta_n  \neq 0 \rbrace:= \Lambda_0 \cup \Lambda_\beta 
\end{AlignedEquation}
We must account for the terms where $\beta_n = 0$ as they vanish in the series \eqref{eq:equation_for_lambda}. These sets are not necessarily disjoint, \textit{i.e.} there can exists a double eigenvalue $\lambda_{n} = \lambda_{n+1}$ and $\lambda_{n}\in\Lambda_0$ while $\lambda_{n+1}\in\Lambda_\beta$. Without loss of generality we assume that the elements of $\Lambda_0, \Lambda_\beta$ are in monotonically decreasing order
\begin{AlignedEquation}
    \Lambda_0 = \lbrace \lambda_{n_1}, \lambda_{n_2}, \ldots \rbrace, \qquad \Lambda_\beta = \lbrace \lambda_{m_1}, \lambda_{m_2}, \ldots \rbrace
\end{AlignedEquation}
Notice that $\Lambda_0, \Lambda_\beta$ can be infinite, finite or empty and the eigenvalues can be double.
% \begin{remark}
% If we consider a symmetric steady state solution $U(x) = U(1-x)$
% % such as a multimodal solution with peaks symmetric about the mid point $x=1/2$, 
% then the functions $A(x)$ and $C(x)$ and all eigenfunctions $\FLS_n$ are also symmetric about $x=1/2$. One can prove in case of a double eigenvalue $\ELS_n = \ELS_{n+1}$, that at least one of the corresponding eigenfunctions, say $\FLS_n$, has an odd number of zeros in the interval $[0,1]$ and thus is an odd function \cite[Proof of Theorem 11.5.1]{lebovitz}. Since $C(x)>0$, the function $C(x)$ is even
% and $\beta_n = 0$ implying $\ELS_n \in \Lambda_0$.
% \end{remark}
% In case of a double eigenvalue $\ELS_n = \ELS_{n+1}$ satisfying $\beta_n\ne0$, $\beta_{n+1}\ne0$, i.e.\ $\ELS_n, \ELS_{n+1} \in \{\ELSa_n\}_{n=0,\dots,\Na}$, we delete one copy and set the corresponding $\beta_n$ to $\sqrt{\beta_n^2 + \beta_{n+1}^2}$. This does not change the series in equation \eqref{eq:equation_for_lambda}.
% Then, the equation \eqref{eq:equation_for_lambda} transforms to 
% \begin{AlignedEquation}\label{eq:equ_lambda_transformed}
% \frac{1}{M} = \sum_{n=0}^{\Na} \frac{\beta_n^2}{\ELSa_n - \ENS},
% \end{AlignedEquation}
% where $\ELSa_n$ is distinct for all $n=0,\dots,\Na$.
Using the introduced notation, equation \eqref{eq:equation_for_lambda} transforms to
\begin{AlignedEquation}\label{eq:equ_lambda_transformed}
\frac{1}{M} = \sum_{\ELS_{m_k}\in\Lambda_\beta} \frac{\beta_{m_k}^2}{\ELS_{m_k} - \ENS}.
\end{AlignedEquation}

With this preparation, we can characterize all eigenvalues of the nonlocal problem \eqref{eq:Nonlocal_problem}.
\begin{theorem}\label{thm:Charakt_lambda}
The eigenvalues of the nonlocal problem \eqref{eq:Nonlocal_problem} are given by
\begin{AlignedEquation}
    \{\ENS_{n} \}_{n=1}^\infty = \Lambda_0 \cup \{\ENS\in\R : \ENS \text{ solves equation } \eqref{eq:equ_lambda_transformed}\}.
\end{AlignedEquation}
% The solutions $\ENS$ of \eqref{eq:equ_lambda_transformed} are characterized as follows:\\
For all distinct and subsequent local eigenvalues  $\lambda_{m_i}, \lambda_{m_{i+1}} \in \Lambda_\beta$ there exists exactly one  
% nolocal eigenvalue $\ENS$ 
solution $\nu$ of equation \eqref{eq:equ_lambda_transformed} with 
$\nu\in (\lambda_{m_{i+1}}, \lambda_{m_{i}})$. In particular if $\Lambda_\beta$ is finite, there exists exactly one solution of equation \eqref{eq:equ_lambda_transformed} with
% nonlocal eigenvalue 
$\nu\in(-\infty,\min\Lambda_\beta)$.
% For all $m\in \{0, \dots, \Na-1\}$, there exists an eigenvalue $\ENS$ of \eqref{eq:Nonlocal_problem} with $\ELSa_{m+1} < \ENS < \ELSa_m$.\\
% If $\Na < \infty$, there exists an eigenvalue $\ENS$ of \eqref{eq:Nonlocal_problem} with $-\infty < \ENS < \ELSa_{\Na}$.\\
%For all $m\in\{0, \dots, \Nb\}$, there exists an eigenvalue $\ENS$ of \eqref{eq:Nonlocal_problem} with $\ENS = \ELSb_m$.
\end{theorem}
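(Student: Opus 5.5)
The plan is to split the proof into two parts: first identify the full spectrum as the union of the two sets, then locate the roots of the secular equation \eqref{eq:equ_lambda_transformed} by monotonicity and intermediate values.

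\emph{Spectrum equals the union.} By Lemma~\ref{lem:PropSpecL}, every eigenvalue $\ENS$ is real, and there are two cases for an eigenpair $(\ENS,\FNS)$.
\begin{itemize}
\item If $J=\intT C\FNS\,dx=0$, the nonlocal term vanishes. Then $(\ENS,\FNS)$ solves the local problem \eqref{eq:Local_problem}, so $\ENS=\ELS_n$ for some $n$. Expanding $\FNS$ in the local basis, $\FNS$ lies in the local eigenspace of $\ELS_n$. Since $\intT C\FNS=0$, at least one eigenfunction in that eigenspace is orthogonal to $C$. After a suitable choice of orthonormal basis within a double eigenspace, $\ELS_n\in\Lambda_0$.
\item If $J\neq0$ and $\ENS\notin\{\ELS_n\}$, Lemma~\ref{lem:equation_for_lambda} gives \eqref{eq:equation_for_lambda}, which is \eqref{eq:equ_lambda_transformed}. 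If instead $J\ne0$ but $\ENS=\ELS_n$, the argument in the proof of Lemma~\ref{lem:equation_for_lambda} gives $a_m(\ELS_m-\ENS)=JM\beta_m$. For every $m$ with $\ELS_m=\ENS$ this forces $\beta_m=0$, so $\ENS\in\Lambda_0$.
\end{itemize}
Conversely, each $\ELS_n\in\Lambda_0$ is a nonlocal eigenvalue with eigenfunction $\FLS_n$, as observed before Lemma~\ref{lem:equation_for_lambda}. Each real root of \eqref{eq:equ_lambda_transformed} is a nonlocal eigenvalue by the converse part of that lemma. This proves the set identity.

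\emph{Location of the roots.} Define $F(\ENS)=\sum_{\Lambda_\beta}\beta_{m_k}^2/(\ELS_{m_k}-\ENS)$ on $\R\setminus\Lambda_\beta$. Because $\ELS_n\sim -n^2$ and the $\beta_n$ are bounded, the series converges locally uniformly away from the poles, so $F$ is smooth there and may be differentiated termwise. This gives
\[
F'(\ENS)=\sum\beta_{m_k}^2/(\ELS_{m_k}-\ENS)^2>0,
\]
so $F$ is strictly increasing on every component interval. If a value is repeated in $\Lambda_\beta$ (a double eigenvalue with both $\beta$'s nonzero), merge the two terms into one pole with weight $\beta_n^2+\beta_{n+1}^2$. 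On an interval $(\ELS_{m_{i+1}},\ELS_{m_i})$ between distinct consecutive poles, the poles have positive weights, so:
\begin{itemize}
\item as $\ENS\downarrow\ELS_{m_{i+1}}$, the term $\beta_{m_{i+1}}^2/(\ELS_{m_{i+1}}-\ENS)\to-\infty$;
\item as $\ENS\uparrow\ELS_{m_i}$, $F\to+\infty$;
\item all other terms stay bounded.
\end{itemize}
By the intermediate value theorem and strict monotonicity, $F=1/M$ has exactly one root in this interval. If $\Lambda_\beta$ is finite, on $(-\infty,\min\Lambda_\beta)$ every term is positive and tends to $0$ as $\ENS\to-\infty$, while $F\to+\infty$ at the left side of the minimal pole. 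Since $1/M>0$, there is exactly one root there. For completeness, on $(\max\Lambda_\beta,\infty)$ all terms are negative, so there is no root. This confirms that the listed roots exhaust the solutions.

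The main obstacle is the bookkeeping of double eigenvalues, not the monotonicity argument. When $\ELS_n=\ELS_{n+1}$, the splitting into $\Lambda_0$ and $\Lambda_\beta$ depends on the basis chosen in the two-dimensional eigenspace. I would fix a basis with one element orthogonal to $C$, which is always possible in dimension two. Then each double value contributes at most one pole to $\Lambda_\beta$, and $\Lambda_0$ collects exactly the eigenvalues that persist, making the counting consistent. A secondary point is justifying termwise differentiation of the infinite series, which follows from the uniform bound on $\beta_n$ and the quadratic decay $\ELS_n\sim -n^2$, exactly as already used in Lemma~\ref{lem:equation_for_lambda}.
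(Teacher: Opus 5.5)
Your proposal is correct and follows the paper's route. The set identity comes from Lemma~\ref{lem:equation_for_lambda} together with $\Lambda_0$ lying in the nonlocal spectrum, and the roots are located by strict monotonicity of the secular function between consecutive poles, plus its limits at the poles and at $-\infty$. Your bookkeeping is more careful than the paper's, which merges double poles via $\sqrt{\beta_{m_i}^2+\beta_{m_{i+1}}^2}$ and treats the case where $\nu$ equals a local eigenvalue as clear. Choosing, in each double eigenspace, a basis element orthogonal to $C$ is exactly what makes the set identity hold as stated.
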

\begin{proof}
It is clear $\Lambda_0 \subseteq \lbrace \nu_n\rbrace_{n=1}^\infty$ and thus the first part follows from Lemma \ref{lem:equation_for_lambda}.\\
For the second part first note that all solutions $\ENS$ of \eqref{eq:equ_lambda_transformed} are real. They can be estimated as follows.
Fix subsequent local eigenvalues $\ELS_{m_i}, \ELS_{m_{i+1}} \in\Lambda_\beta$.
From now on we assume that $\ELS_{m_i}$ and $\ELS_{m_{i+1}}$ are distinct. If they coincide, we merge the two summands in the series \eqref{eq:equ_lambda_transformed} by setting $\beta_{m_i} = \sqrt{\beta_{m_i}^2 + \beta_{m_{i+1}}^2}$.

% Thus, by construction of $\Lambda_\beta$, we have $\beta_{m_i}\ne 0$, $\beta_{m_{i+1}}\ne0$ and $\ELS_{m_i} > \ELS_{m_{i+1}}$.

Then, the right-hand-side of \eqref{eq:equ_lambda_transformed} is continuous and monotone increasing in $\ENS \in(\ELS_{m_{i+1}}, \ELS_{m_i})$ and converging to $-\infty$ as $\ENS\downarrow\ELS_{m_{i+1}}$ and to $\infty$ as $\ENS\uparrow\ELS_{m_i}$.
Since $M>0$ by \eqref{eq:definition_A_C_M}, there exists a unique solution $\ENS_{m_i}$ of \eqref{eq:equ_lambda_transformed} in the interval $(\ELS_{m_{i+1}}, \ELS_{m_i})$.
Note that the right-hand-side of \eqref{eq:equ_lambda_transformed} is negative for all $\ENS > \ELS_{m_0}$ and, thus, there exists no solution $\ENS>\ELS_{m_0}$.
In case of finite $\Lambda_\beta$, there exists a solution $\ENS$ in the interval $(-\infty, \min\Lambda_\beta)$ as the right-hand-side of \eqref{eq:equ_lambda_transformed} converges to $0$ as $\ENS\downarrow -\infty$ and to $\infty$ as $\ENS\uparrow\min\Lambda_\beta$.
\end{proof}

\begin{remark}
    Applying this characterization to the eigenvalue problem for the constant steady solution $\overline U \equiv \kappa$, 
    % from Proposition \ref{thm:Stab_constant_steady_state},
    we see that $\Lambda_\beta = \lbrace\kappa-1\rbrace$ and $\Lambda_0 = \{\ELS_k\}_{k=1}^\infty$ and hence
    the nonlocal eigenvalues satisfy $\ENS_0 \in (-\infty, \kappa-1)$ and $\ENS_j = \ELS_j$ for all $j\geqslant 1$.
    Indeed, this coincides with the results from Proposition~\ref{thm:Stab_constant_steady_state}.
\end{remark}

\subsection{Stability of steady states}
With this characterization, we can find a criterion under which the steady state solution $U$ is (linearly) unstable.

\begin{theorem}\label{thm:Sufficient_conds_instability}
If the derivative $U_x$ of a stationary solution $U$ has at least three zeros in $[0,1)$, then $U$ is linearly unstable.
\end{theorem}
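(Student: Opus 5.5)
Differentiating the stationary equation \eqref{equ_varphi_Stat} in $x$ (legitimate by elliptic regularity, since $U$ is smooth once it is in $W^{2,2}(\T)$) gives
\begin{equation}
D (U_x)_{xx} + A(x) U_x = 0 ,
\end{equation}
because the nonlocal denominator $\intT e^U\,dy$ does not depend on $x$. Hence $\FLS:=U_x$ is a periodic eigenfunction of the local problem \eqref{eq:Local_problem} with eigenvalue $\ELS=0$. It is nontrivial, since $U$ is nonconstant. Moreover
\begin{equation}
\intT C\,U_x\,dx=\intT (e^U)_x\,dx=0 ,
\end{equation}
so $\beta=0$ for this eigenfunction. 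Thus $0\in\Lambda_0$, and by Theorem \ref{thm:Charakt_lambda} it is also a nonlocal eigenvalue; this is the translation mode.

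Next I would place this eigenvalue in the ordering of Proposition \ref{thm:Eigenvalues_local_problem}. A periodic function has an even number of zeros on $[0,1)$. By the oscillation part of Proposition \ref{thm:Eigenvalues_local_problem}, if $U_x$ has at least three zeros it has at least $4=2j+2$ zeros with $j\ge1$, so $0=\ELS_{2j+1}$ or $0=\ELS_{2j+2}$. (I would check simple zeros here: a double zero of $U_x$ would force $U_x\equiv0$ by ODE uniqueness.) Consequently there are at least three local eigenvalues strictly above zero, namely $\ELS_0>\ELS_1\ge\ELS_2>0$.

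The main step is to convert positive local eigenvalues into a positive nonlocal eigenvalue. I would use the interlacing of Theorem \ref{thm:Charakt_lambda}. Among $\ELS_0,\ELS_1,\ELS_2$, each one either lies in $\Lambda_0$, and is then itself a positive nonlocal eigenvalue, or lies in $\Lambda_\beta$. If at least two distinct positive values lie in $\Lambda_\beta$, a nonlocal eigenvalue lies strictly between them and is therefore positive.

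The delicate case is when the positive part of $\Lambda_\beta$ consists of a single value. For example, $\ELS_0\in\Lambda_\beta$ while $\ELS_1,\ELS_2$ have $\beta=0$ yet cannot themselves be counted. Since $\ELS_1=\ELS_2$ as a double eigenvalue, after merging it only contributes one summand. I would handle this by a variational count rather than by interlacing. On the two-dimensional space $\mathrm{span}\{\FLS_1,\FLS_2\}$, the local quadratic form $-\langle \mathcal L_{loc}\psi,\psi\rangle$ is negative. The nonlocal operator is the local one minus a rank-one positive term, $-\mathcal L=-\mathcal L_{loc}+M\,C\langle C,\cdot\rangle$, and this perturbation only matters along one direction. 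Restricting to the codimension-one subspace $\{\psi:\intT C\psi=0\}$ inside $\mathrm{span}\{\FLS_0,\FLS_1,\FLS_2\}$, which is at least two-dimensional, gives a subspace of dimension at least $2$ on which $\langle\mathcal L\psi,\psi\rangle>0$. By min–max, using the self-adjointness in Lemma \ref{lem:PropSpecL}, $\mathcal L$ then has at least two positive eigenvalues counted with multiplicity. This rank-one/min–max argument in fact covers every case uniformly, so I would present it as the main argument and use the interlacing only as intuition.

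Finally, a positive eigenvalue of the self-adjoint operator $\mathcal L$ with discrete spectrum gives linear instability. I expect the only real obstacle to be the bookkeeping of zeros versus indices, in particular ensuring that zero is not $\ELS_1$ or $\ELS_2$, which needs at least four zeros and hence the parity remark.
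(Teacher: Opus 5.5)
Your proof is correct. Its first half is the paper's argument: you differentiate to get $D(U_x)_{xx}+A\,U_x=0$, then use the oscillation count to get $\lambda_0>\lambda_1\ge\lambda_2>0$. Your remark that the zeros of $U_x$ are simple, and hence even in number, makes explicit a step the paper leaves implicit.

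The key step is genuinely different. The paper uses the dichotomy furnished by Theorem~\ref{thm:Charakt_lambda}. Either one of $\lambda_0,\lambda_1,\lambda_2$ lies in $\Lambda_0$, and is then itself a positive nonlocal eigenvalue. Or all lie in $\Lambda_\beta$, and the secular equation \eqref{eq:equ_lambda_transformed} has a root in $(\lambda_1,\lambda_0)$. You instead apply Courant--Fischer to the subspace $\{\psi\in\mathrm{span}\{\psi_0,\psi_1,\psi_2\}:\int_0^1 C\psi\,dx=0\}$, which has dimension at least two. On it the rank-one term vanishes and $\langle\mathcal L\psi,\psi\rangle\ge\lambda_2\|\psi\|_{L^2}^2>0$.

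Your route has three advantages. It is self-contained, needing only self-adjointness, boundedness from above and discreteness of the spectrum from Lemma~\ref{lem:PropSpecL}. It does not depend on how a basis is chosen inside a double eigenspace. It is quantitative: a rank-one negative perturbation removes at most one positive eigenvalue. So you obtain at least two positive eigenvalues of $\mathcal L$ (with multiplicity), and in fact instability as soon as $\lambda_1>0$. The paper's route, in exchange, locates an unstable eigenvalue between $\lambda_1$ and $\lambda_0$. It also stays within the interlacing framework the paper reuses in the remark following Theorem~\ref{thm:Sufficient_conds_instability}.

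One correction to your motivation: the ``delicate case'' does not occur. A local eigenfunction with $\int_0^1 C\psi\,dx=0$ is an eigenfunction of $\mathcal L$ with the same eigenvalue. So $\lambda_1,\lambda_2$ with $\beta=0$ \emph{can} be counted. If none of the three lies in $\Lambda_0$, then $\lambda_0>\lambda_1$ are two distinct consecutive elements of $\Lambda_\beta$. Moreover, a two-dimensional eigenspace always contains a nonzero element with $\int_0^1 C\psi\,dx=0$. The paper's dichotomy is therefore exhaustive. Since you present min--max as your main argument, this misstatement does not affect correctness.
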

\begin{proof}
Since $U$ solves \eqref{equ_varphi_Stat}, we can differentiate \eqref{equ_varphi_Stat} with respect to $x$ and obtain that $U_x$ solves
\begin{AlignedEquation}
    0 = D (U_x)_{xx} - U_x + \kappa \frac{e^U}{\intT e^U d y} U_x = D(U_x)_{xx} + A(x) U_x,
\end{AlignedEquation}
which is exactly the local problem \eqref{eq:Local_problem} with $\ELS = 0$.
Since $U\in W^{2,2}(\T)$, all $U$, $U_x$ and $U_{xx}$ satisfy periodic boundary conditions.
Hence, the local problem has a zero eigenvalue with corresponding eigenfunction with at least $3$ zeros on the interval $[0,1)$.
Proposition \ref{thm:Eigenvalues_local_problem} implies that $\ELS_3 \geqslant 0 $ as the eigenfunctions  $\FLS_0$, $\FLS_1$, $\FLS_2$ have at most two zeros on the interval $[0,1)$. Therefore there are at least three strictly positive local eigenvalues $\ELS_0 > \ELS_1 \geqslant \ELS_2 > \ELS_3 \geq 0 $. This is already sufficient to obtain a positive nonlocal eigenvalue $\ENS$. Indeed, if all three local eigenvalues are in $\Lambda_\beta$ then by Theorem \ref{thm:Charakt_lambda} there exists a positive nonlocal eigenvalue $\ENS\in(\ELS_0,\ELS_1)$. Otherwise at least one of those local eigenvalues is also a nonlocal eigenvalue (belongs to $\Lambda_0$) and hence the proof is finished.
\end{proof}

\begin{remark}\label{rem:nu1=0_not_enough}
If $U$ is a steady state solution, with $U_x$ having two zeros in the interval $[0,1)$, then either $\ELS_1 = 0$ or $\ELS_2 = 0$.
Even though $\ELS_1=0$ implies $\ELS_0 > 0$, this is not enough to obtain instability of the nonlocal problem.
% Indeed, assume that $\ELS_0>0$ is not an eigenvalue of the nonlocal problem, namely \ $\ELS_0\in\Lambda_\beta$ and $\ELS_1 = 0$ is an eigenvalue of the nonlocal problem,  $\ELS_1\in\Lambda_0$.
Indeed, assuming $\ELS_0 \in \Lambda_\beta$ and $\ELS_1\in\Lambda_0$ implies $\Lambda_0\subseteq (-\infty,\,0].$ 
Moreover, by Theorem \ref{thm:Charakt_lambda} the nonlocal eigenvalues satisfy $\ENS_0 = 0$ and $\ENS_1 \in(\ELS_{m_1}, \ELS_0)$ with $\ELS_{m_1}\in\Lambda_\beta$.
If $\ELS_{m_1} <0$ it is possible that $\nu_1 <0$ and hence all nonlocal eigenvalues are nonpositive.
\end{remark}

To relate these results to the nonlinear dynamics, we exploit the fact that problem \eqref{equ_varphi} can be viewed as a gradient flow in $L^2(\T)$ with respect to the Lyapunov functional $\mathcal{J}$ defined in \eqref{eq:FunDef}.
This allows us to connect linear instability with nonlinear instability in the $L^2$-norm for perturbations in $W^{1,2}(\T)$.
\begin{lemma}\label{lem:linear_implies_nonlinear_instab} Let $U$ be a linear unstable steady state of \eqref{equ_varphi}. Then, the steady state is nonlinearly unstable with respect to the $L^2$-norm.
\end{lemma}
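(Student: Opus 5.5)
The plan is to use the gradient-flow structure: $\mathcal{J}$ from \eqref{eq:FunDef} is a strict Lyapunov functional, and linear instability means $U$ is not a local minimizer, i.e.\ a saddle of $\mathcal{J}$. The argument then becomes an energy comparison.

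First, identify the linearization with the Hessian. Since $U$ is a critical point, the operator $\mathcal{L}$ from Section~\ref{sec:Stability} satisfies $\langle -\mathcal{L}\varphi,\psi\rangle = D^2\mathcal{J}(U)(\varphi,\psi)$ for $\varphi,\psi\in W^{1,2}(\T)$; this is read off from \eqref{eq:Ders} and the definitions of $A$, $M$, $C$. Linear instability gives an eigenpair $(\ENS,\FNS)$ with $\ENS>0$ and $\FNS\in W^{2,2}(\T)$. Then $D^2\mathcal{J}(U)(\FNS,\FNS) = -\ENS\|\FNS\|_{L^2}^2<0$. Because $\mathcal{J}$ is $C^2$ on $W^{1,2}(\T)$ (the exponential term is smooth thanks to $W^{1,2}\subset L^\infty$), Taylor expansion gives
\begin{equation}
\mathcal{J}(U+\varepsilon\FNS) = \mathcal{J}(U) - \tfrac{\varepsilon^2}{2}\ENS\|\FNS\|_{L^2}^2 + o(\varepsilon^2).
\end{equation}
So for every small $\varepsilon>0$ there is initial datum $u_0^\varepsilon = U+\varepsilon\FNS$, arbitrarily close to $U$ in $W^{1,2}$ (hence in $L^2$), with $\mathcal{J}(u_0^\varepsilon)<\mathcal{J}(U)$.

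Next, argue by contradiction. Suppose $U$ is $L^2$-stable: for some $\delta$ every solution starting in a small enough neighbourhood stays within $L^2$-distance $\delta$ of $U$ for all times. Along the flow from Proposition~\ref{prop:ExistenceGlobal} we have the energy identity
\begin{equation}
\mathcal{J}(u(t)) + \int_0^t\|u_t\|_{L^2}^2\,ds = \mathcal{J}(u_0),
\end{equation}
so $\mathcal{J}(u(t))\le \mathcal{J}(u_0^\varepsilon)<\mathcal{J}(U)$ for all $t$. Coercivity (Lemma~\ref{thm:Coercive}) bounds $u(t)$ in $W^{1,2}$ uniformly, and $\int_0^\infty\|u_t\|^2<\infty$. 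Hence along a sequence $t_k\to\infty$ we have $u_t(t_k)\to0$ in $L^2$, and $u(t_k)$ converges, weakly in $W^{1,2}$ and strongly in $L^2$, to a critical point $U^*$. Weak lower semicontinuity (Lemma~\ref{thm:WeaklyLower}) gives $\mathcal{J}(U^*)\le\mathcal{J}(u_0^\varepsilon)<\mathcal{J}(U)$. Stability would force $\|U^*-U\|_{L^2}\le\delta$, with $\delta$ arbitrary.

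The main obstacle is excluding critical points $U^*\neq U$ that lie arbitrarily close to $U$ in $L^2$ with strictly lower energy. I would handle it as follows.
\begin{itemize}
\item Critical points are uniformly bounded in $W^{2,2}$: from \eqref{equ_varphi_Stat}, $\int U^* = \kappa$, and the nonlinearity is bounded by $\kappa e^{2\|U^*\|_\infty}$ with $\|U^*\|_\infty$ controlled. So $L^2$-closeness upgrades to $W^{1,2}$-closeness by interpolation.
\item $\mathcal{J}$ is continuous in $W^{1,2}$, so the energies of critical points near $U$ tend to $\mathcal{J}(U)$.
\item Choose $\delta$ first. If critical points accumulate at $U$, use the translation-invariant structure, or better a Łojasiewicz-type argument (the nonlinearity is analytic), to show that $\mathcal{J}$ is constant on critical points in a small $W^{1,2}$-neighbourhood of $U$.
\end{itemize}
The last point contradicts $\mathcal{J}(U^*)\le\mathcal{J}(U)-c\varepsilon^2$ once we note that $U^*$ lies in that neighbourhood.

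An alternative that avoids the critical-point analysis is to invoke directly the standard result that linear instability of a self-adjoint sectorial linearization with smooth nonlinearity implies nonlinear instability, via the unstable manifold theorem (Henry). I would keep this as the fallback if the Łojasiewicz route is too heavy.
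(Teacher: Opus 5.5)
Your proof is correct. Its first half coincides with the paper's: both identify $\mathcal{L}$ with $-D^2\mathcal{J}(U)$ and use a positive eigenvalue to exhibit a direction of strictly negative second variation.

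The two proofs part at the last step. The paper concludes only that $U$ is not a local minimizer of $\mathcal{J}$. It then invokes ``general theory of gradient flows'' for the implication that a nonlinearly stable equilibrium is a local minimizer. You actually prove that implication in this setting, in three steps:
\begin{enumerate}
\item energy dissipation from $U+\varepsilon\varphi$, with the uniform $W^{1,2}$ bound coming from coercivity;
\item extraction of an $\omega$-limit critical point $U^*$ in the $\delta$-ball with $\mathcal{J}(U^*)<\mathcal{J}(U)$;
\item the {\L}ojasiewicz--Simon inequality, which forces critical values to be locally constant near $U$.
\end{enumerate}
The inequality applies because $\mathcal{J}$ is real analytic on $W^{1,2}(\mathbb{T})$, which is a Banach algebra in one dimension. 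Moreover, $D^2\mathcal{J}(U)$ is a compact perturbation of the isomorphism induced by $-D\partial_{xx}+I$, hence Fredholm of index zero.

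This is exactly the ingredient hidden in the paper's appeal. For merely smooth gradient systems a Lyapunov stable equilibrium need not be a local minimizer, so analyticity is what makes the paper's short argument valid.

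Your order of choices is right: $\rho$ from {\L}ojasiewicz, then $\delta$ via the $W^{2,2}$ bound on critical points and interpolation, then $\eta$ from stability, then $\varepsilon$. You also correctly see that continuity of $\mathcal{J}$ alone cannot close the argument, because the energy gap $c\varepsilon^2$ is fixed only after $\delta$.

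Two minor points:
\begin{itemize}
\item Translation invariance by itself would suffice only if $U$ were nondegenerate modulo translations, so keep the {\L}ojasiewicz route.
\item The $L^\infty$ control on critical points comes from testing \eqref{equ_varphi_Stat} with $U^*$, as in the proof of Theorem~\ref{thm:fold_bifurcation}.
\end{itemize}

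Your fallback via Henry's linearized-instability (unstable manifold) theorem is also valid and bypasses the variational structure altogether. Since the unstable eigenspace is finite-dimensional, the escape it produces is visible in the $L^2$ norm as well.
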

\begin{proof} The problem \eqref{equ_varphi} is a gradient flow with respect to the $L^2$-norm, thus is follows from the general theory of gradient flows and dynamical systems~\cite{ambrosio2005gradient} that nonlinearly stable solutions to problem \eqref{equ_varphi} are local minima of the Lyapunov functional $\mathcal{J}$ given by formula \eqref{eq:FunDef}. For the linearization $\mathcal{L}$ at $U$ we have \begin{align*} D^2\mathcal{J}(U)(\psi,\psi) = (-\mathcal{L}\psi, \psi)_{L^2}\quad \text{for all } \psi \in W^{2,2}(\T), \end{align*} compare to equation \eqref{eq:Ders}. Hence, the linearization corresponds to minus Hessian of $\mathcal{J}$ at $U$. By Lemma \ref{lem:PropSpecL}, the linear instability of $U$ implies the existence of a positive real eigenvalue $\lambda$. So if $\mathcal{L}$ has an eigenvalue $\lambda > 0$ with eigenvector $\varphi$, then \begin{align*} D^2\mathcal{J}(U)(\varphi, \varphi) = - \lambda \|\varphi\|_{L^2}^2 < 0. \end{align*} Hence, $U$ is not a local minimum of $\mathcal{J}$ and thus nonlinearly unstable. \end{proof}
\begin{remark}
In general, linear stability of a steady state does not necessarily imply nonlinear stability in the $L^2$-norm; it often only guarantees nonlinear stability in the $L^\infty$-norm \cite{KMMue2025}, or may require restricting the class of admissible perturbations \cite{Berlyand2026_nonlinStability} as we also did in the above lemma.
\end{remark}
With these results we can now prove our main result about the instability of $m$-modal solutions in Theorem \ref{thm:StabilityNCst}.

\begin{proof}[Proof of Theorem \ref{thm:StabilityNCst}]
    Let $U$ be an arbitrary stationary $m$-periodic solution to problem \eqref{equ_varphi}. Then, $U$ has at least $m$ peaks and hence, the derivative $U_x$ has at least $2m-1$ zeros in the interval $[0,1)$.
    By Theorem \ref{thm:Sufficient_conds_instability},  all $m$-modal solution $U$ are linearly unstable for $m\geqslant 2$.
By Lemma \ref{lem:linear_implies_nonlinear_instab}, nonlinear instability of all $m$-modal solutions follows.
    
    The periodic solution to problem \eqref{equ_varphi_Stat} of Theorem \ref{thm:ExistenceStationary} constructed in Section \ref{sec:Existence_steady_states} is a minimum of the Lyapunov functional given by formula \eqref{eq:FunDef}.
    % Recall, that problem \eqref{equ_varphi} is a gradient flow with respect to $L^2$-norm, thus is follows from the general theory of gradient flows and dynamical systems~\cite{ambrosio2005gradient} that this is a nonlinearly stable solution to problem \eqref{equ_varphi}.
    Therefore, the solution is nonlinearly stable and the solution is unimodal.
\end{proof}

\begin{corollary}[Bistability]
    \label{rem:Stab}
    For $\kappa < 1$ and sufficiently small $D>0$, the system exhibits bistability: both the constant state $U \equiv \kappa$ (stable by Proposition \ref{thm:Stab_constant_steady_state}) and the nonconstant unimodal solution (stable by Theorems \ref{thm:ExistenceStationary} and \ref{thm:StabilityNCst}) coexist.
\end{corollary}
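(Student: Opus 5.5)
The plan is to obtain both stable states by combining results already established, and then to make sure the parameter ranges overlap.

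\textbf{Step 1: the constant state is stable.} By Proposition~\ref{thm:Stab_constant_steady_state}, $\overline U\equiv\kappa$ is linearly stable whenever $\kappa<1+\EL_1 D$. For $\kappa<1$ this holds for every $D>0$, because $1+\EL_1 D>1>\kappa$.

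We can also strengthen this to a local-minimum statement. In the proof of Theorem~\ref{thm:ExistenceStationary} (uniqueness part) we showed
\[
D^2\mathcal J(\overline U)(\psi,\psi)\geqslant \min\{1,\,1+D\EL_1-\kappa\}\intT\psi^2 .
\]
For $\kappa<1$ the right-hand side is strictly positive. Hence $\overline U$ is a strict local minimizer of $\mathcal J$, and by the gradient-flow structure it is Lyapunov stable.

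\textbf{Step 2: a stable nonconstant unimodal state exists.} Fix $\kappa\in(0,1)$. Lemma~\ref{thm:ExistenceOfMinus} supplies $D_{min}(\kappa)=D_1(\kappa)>0$, defined by \eqref{eq:MaxN}. Here only $D_1$ enters, since $\kappa\le1$. We must check that $D_1(\kappa)$ is genuinely positive, i.e.\ that the maximand $\kappa N(\sqrt2-1)-\log(4N)$ is positive for some $N$. This holds because the linear term in $N$ eventually dominates the logarithm, so the maximum in \eqref{eq:MaxN} is attained at some large $N$ and is positive.

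For $0<D<D_{min}(\kappa)$, Theorem~\ref{thm:ExistenceStationary} then yields a global minimizer $U$ of $\mathcal J$ with $\mathcal J(U)<-\kappa^2/2=\mathcal J(\overline U)$. Consequently $U\neq\overline U$, so $U$ is nonconstant, and it is Lyapunov stable. Theorem~\ref{thm:StabilityNCst} identifies this minimizer as unimodal.

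\textbf{Step 3: coexistence.} Both states are stable steady states for the same parameters $(\kappa,D)$ with $\kappa<1$ and $D<D_{min}(\kappa)$. They are distinct, since their energies differ. This is exactly the claimed bistability.

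\textbf{Main obstacle.} The only delicate point is justifying that the nonconstant minimizer is unimodal. Theorem~\ref{thm:StabilityNCst} argues this through instability of any $m$-modal state with $m\ge2$. For the minimizer, Theorem~\ref{thm:Sufficient_conds_instability} excludes $U_x$ having three or more zeros, because a global minimizer cannot be linearly unstable (Lemma~\ref{lem:linear_implies_nonlinear_instab}). A nonconstant periodic $U$ has $U_x$ with at least two zeros, hence exactly two, hence a single peak. I would spell out this zero-counting explicitly, as it is the step where the cited theorems must be applied with care.
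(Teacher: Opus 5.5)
Your proposal is correct and follows the paper's own justification. The paper combines Proposition~\ref{thm:Stab_constant_steady_state}, which for $\kappa<1$ holds for every $D>0$, with Theorems~\ref{thm:ExistenceStationary} and~\ref{thm:StabilityNCst} for $D<D_{min}(\kappa)=D_1(\kappa)$, and so do you; your additions (checking $D_1(\kappa)>0$, upgrading $\overline U$ to a strict local minimizer via the bound $D^2\mathcal J(\overline U)(\psi,\psi)\geqslant\min\{1,1+D\EL_1-\kappa\}\intT\psi^2$, and counting the zeros of $U_x$ explicitly to get unimodality) are sound refinements of the same argument rather than a different route.
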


\section{Bifurcation analysis}\label{sec:Bifurcation_analysis}
In this section we study nonconstant steady states to problem \eqref{equ_varphi}, bifurcating from the branch of constant solutions $\bar{\Phi}(\kappa) \equiv \kappa$.  Here, $\kappa$ is also a bifurcation parameter. 
The linearized operator $\mathcal{L}(\kappa)$ at a constant steady state $\bar{\Phi}(\kappa)$ is given by
\begin{align*}
    \mathcal{L}\left(\kappa\right)(\varphi) = D \varphi_{xx} + (\kappa - 1) \varphi - \kappa \intT \varphi(y) d y,
\end{align*}
and we emphasize its dependency on the bifurcation parameter $\kappa$.
Spectral properties of the linearized operator were discussed in Section \ref{sec:Stability_constant}. We recall, the eigenvalues of $\mathcal{L}\left(\kappa\right)$ are
\begin{align*}
    \nu_0 = -1, \quad \nu_k=\kappa - 1 - D\mu_k,
\end{align*}
where $\mu_k = 4 \pi^2 k^2$ are the eigenvalues of $-\Delta$ with periodic boundary conditions.
The eigenvalue $\nu_n$ crosses zero for each critical value $\kappa_n := 1 + D\mu_n$.
Thus, the linearized operator $\mathcal{L}(\kappa)$ has a zero eigenvalue for all $\kappa = \kappa_n$.
We denote the corresponding constant solution by $\bar{\Phi}_n \equiv \bar{\Phi}(\kappa_n)$. 

\subsection{Existence of branches}
To establish the existence of nonconstant solution branches bifurcating from $(\bar{\Phi}_n, \kappa_n)$, we apply the Crandall–Rabinowitz theorem for bifurcation from simple eigenvalues \cite[Theorem I.5.1]{Kielhoefer}.
% (Theorem~\ref{thm:BifSimplEigv}).
We first define the relevant function spaces:
\begin{AlignedEquation}
    Y &:= \left\{\varphi\in L^2(\T) \mid
         \varphi(x) = \varphi(1-x) \text{ for a.\ e.\ } x\in\T: \text{ symmetry constraint}
    \right\},\\
    X &:= \{\varphi\in W^{2,2}(\T) \mid \varphi(x) = \varphi(1-x)  \text{ for a.\ e.\ } x\in\T\}.
\end{AlignedEquation}
Note that the periodic boundary conditions of the Laplace operator $\Delta$ are naturally incorporated in the space $W^{2,2}(\T)$.
The symmetry condition ensures, the kernel of $\mathcal{L}(\kappa_n)$ is one-dimensional:
\begin{align*}
    \ker \mathcal{L}(\kappa_n) = \mathrm{span} \{\varphi_n\},\quad \text{where } \varphi_n := \sqrt{2} \cos(2n\pi x).
\end{align*}
We consider solutions in a sufficiently small neighborhood of $(\bar{\Phi}_n, \kappa_n)$ of the form
\begin{align*}
    \varphi = \bar{\Phi}(\kappa) + \chi, \quad \kappa = \kappa_n + \alpha, \quad \text{with } \chi \in X,\ \alpha\in\R.
\end{align*}
Define the mapping $G:X\times \R \to Y$ by
\begin{align*}
    G(\chi, \alpha) := D\chi_{xx} - \chi + (\kappa_n + \alpha) \left( \frac{e^\chi}{\int e^\chi d y} - 1 \right).
\end{align*}
Then $G(\chi, \alpha) = 0$ if and only if $\varphi = \bar{\Phi}(\kappa_n + \alpha) + \chi$ solves the stationary problem~\eqref{equ_varphi_Stat} for $\kappa = \kappa_n + \alpha$.
Note that, the trivial branch of constant steady states $\left(\bar{\Phi}(\kappa), \kappa\right)$ transforms to $(0, \alpha)$ and has a zero eigenvalue at $\alpha = 0$.

The map $G$ is well-defined as $X\subset L^\infty(\T)$ and thus $G(X\times\R)\subset L^2(\T)$.
To confirm the symmetry constraint, use that $\chi_x(x) = - \chi_x(1-x)$ if $\chi(x) = \chi(1-x)$ and $\chi$ regular enough. Hence, we have $\chi_{xx}(x) = \chi_{xx}(1-x)$ and $G(\chi,\alpha)(x) = G(\chi,\alpha)(1-x)$ follows.

We are now in a position to apply the Crandall-Rabinowitz theorem to the equation $G(\chi, \alpha) = 0$ at the bifurcation point $(0,0)$.

% \begin{theorem}\label{thm:Existence_bif_branches}
%     For each $n\in\N$, there exist $\delta>0$ and functions $\alpha : (-\delta, \delta) \to \R$, $z : (-\delta, \delta) \to X$ such that $\Gamma(s) = \Big(\bar{\Phi}_n + \alpha(s) + s \varphi_n + s z(s), \kappa_n + \alpha(s)\Big)$ is a branch of solutions to problem \eqref{equ_varphi_Stat} bifurcating from $(\bar{\Phi}_n, \kappa_n)$.
%     Furthermore, it holds $\alpha(0) = 0$, $z(0) = 0$ and $\varphi_n = \sqrt{2} \cos(2n\pi x)$ and
%     for $s\ne0$ small enough, these stationary solutions are nonconstant.
% \end{theorem}

\begin{theorem}
    \label{thm:BifurcationBranch}
    For each $n\in\N$ there exist $\delta >0$ and smooth functions
    \begin{AlignedEquation}
        \alpha_n(-\delta,\delta) \to \R, \quad z_n:(-\delta,\delta) \to X
    \end{AlignedEquation}
    with $\alpha_n(0) = z_n(0) = 0$, such that curve
    \begin{AlignedEquation}
        \Gamma_n(s) = \Big(\bar{\Phi}_n + \alpha_n(s) + s \varphi_n + s z_n(s), \kappa_n + \alpha_n(s)\Big)
    \end{AlignedEquation}
    forms a branch of solutions to problem \eqref{equ_varphi_Stat} bifurcating from $\big(\bar{\Phi}_n, \kappa_n\big)$. 
    For all $s\in(-\delta,\delta) \setminus \lbrace0\rbrace$, the solutions on $\Gamma_n(s)$ are nonconstant.
 \end{theorem}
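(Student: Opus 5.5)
We will verify the hypotheses of the Crandall--Rabinowitz theorem \cite[Theorem I.5.1]{Kielhoefer} for the map $G:X\times\R\to Y$ at $(\chi,\alpha)=(0,0)$. First, $G(0,\alpha)=0$ for all $\alpha$, because $e^0/\int e^0=1$. Next we need $G$ to be smooth ($C^2$ suffices, and it is in fact analytic). The linear part $D\chi_{xx}-\chi$ is bounded from $X$ to $Y$. The nonlinear part $\chi\mapsto e^\chi/\int e^\chi$ is a composition of the Nemytskii operator $\chi\mapsto e^\chi$ with the quotient by a nonvanishing smooth functional. Since $X\hookrightarrow W^{2,2}(\T)\hookrightarrow C(\T)$, which is a Banach algebra, the exponential series converges in $C(\T)$ and is analytic there, and $C(\T)\hookrightarrow L^2(\T)$. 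The symmetry $x\mapsto 1-x$ is preserved, as already noted in the text. Hence $G\in C^\infty(X\times\R,Y)$.

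Then we compute the derivatives at $(0,0)$. Differentiating, we get
\[
D_\chi G(0,\alpha)\psi = D\psi_{xx}-\psi+(\kappa_n+\alpha)\Big(\psi-\intT\psi\,dy\Big)=\mathcal L(\kappa_n+\alpha)\psi ,
\]
so $D_\chi G(0,0)=\mathcal L(\kappa_n)$ restricted to $X$. In the even cosine basis $\{1\}\cup\{\sqrt2\cos(2\pi kx)\}$, which spans $Y$, this operator acts diagonally. It sends $1$ to $-1$ and $\sqrt2\cos(2\pi k x)$ to $(\kappa_n-1-D\mu_k)\sqrt2\cos(2\pi kx)$. The coefficient vanishes only for $k=n$. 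The sine modes are excluded by the symmetry, and this is exactly why we work in $X,Y$. So $\ker D_\chi G(0,0)=\mathrm{span}\{\varphi_n\}$. The range is the $L^2$-orthogonal complement of $\varphi_n$ in $Y$. Indeed, for $f\in Y$ with $\langle f,\varphi_n\rangle=0$ we invert mode by mode with divisors $|\kappa_n-1-D\mu_k|\sim Dk^2$, which gives a preimage in $X$. Thus the range is closed of codimension one, and $D_\chi G(0,0)$ is Fredholm of index zero.

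The transversality condition is the step requiring care, though it is short. We have $D^2_{\chi\alpha}G(0,0)\varphi_n=\varphi_n-\intT\varphi_n=\varphi_n$, and $\langle\varphi_n,\varphi_n\rangle=1\neq0$, so $D^2_{\chi\alpha}G(0,0)\varphi_n\notin \mathrm{Range}\,D_\chi G(0,0)$. Crandall--Rabinowitz then yields $\delta>0$ and smooth curves $\alpha_n(s)$, $z_n(s)$ with $\alpha_n(0)=0$, $z_n(0)=0$, and $z_n(s)$ in a complement of the kernel (e.g.\ $\langle z_n(s),\varphi_n\rangle=0$). These satisfy $G(s\varphi_n+sz_n(s),\alpha_n(s))=0$. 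Translating back via $\varphi=\bar\Phi(\kappa_n+\alpha)+\chi$ with $\bar\Phi(\kappa_n+\alpha)=\bar\Phi_n+\alpha$ gives exactly the curve $\Gamma_n(s)$ in the statement.

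Finally we show the solutions are nonconstant. For $s\neq0$ the perturbation is $\chi=s(\varphi_n+z_n(s))$ with $\langle z_n(s),\varphi_n\rangle=0$, so $\langle\chi,\varphi_n\rangle=s\neq0$. A constant function is orthogonal to $\varphi_n$, hence $\chi$ is not constant, and neither is the solution. The main obstacle is the functional-analytic bookkeeping: smoothness of $G$ and the symmetry reduction that makes the kernel simple. The remaining steps are direct computations in the Fourier basis.
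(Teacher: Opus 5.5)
Your proposal is correct and follows the paper's proof. Both apply Crandall--Rabinowitz to $G$ on the reflection-symmetric spaces $X,Y$, compute the kernel and the range $\mathrm{span}\{\varphi_n\}^\perp$ in the cosine basis, and obtain transversality from $G_{\chi\alpha}(0,0)\varphi_n=\varphi_n$. Your nonconstancy argument via $\langle z_n(s),\varphi_n\rangle=0$ is slightly sharper than the paper's, which only uses $z_n(s)\to 0$ and so needs $|s|$ small, and it gives nonconstancy on all of $(-\delta,\delta)\setminus\{0\}$ exactly as the statement claims.
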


\begin{proof}
    Let us verify the assumptions of \cite[Theorem I.5.1]{Kielhoefer}.
    % Theorem \ref{thm:BifSimplEigv}.
    The trivial branch condition holds as $G(0,\alpha) = 0$ for all $\alpha \in \mathbb{R}$.
    
    Next, we prove that $G_\chi(0,0)$ is a Fredholm operator of index zero with one-dimensional kernel.
    The linearization $G_\chi(0,0): X \to Y$ is given by
    \begin{AlignedEquation}
        \label{eq:GFunFirstDer}
        G_\chi(0,0)(h) = D h'' + (\kappa_n - 1) h - \kappa_n \int_0^1 h(y) \, dy.
    \end{AlignedEquation}
    The kernel is one-dimensional as $\ker G_\chi(0,0) = \ker\mathcal{L}(\kappa_n) = \text{span}\{\varphi_n\}$. To establish that $\text{codim ran } G_\chi(0,0) = 1$, we prove that $\text{ran } G_\chi(0,0) = \text{span}\{\varphi_n\}^\perp$.

    First, we show $\text{ran } G_\chi(0,0) \subseteq \text{span}\{\varphi_n\}^\perp$. For any $h \in X$, multiplying \eqref{eq:GFunFirstDer} by $\varphi_n$ and integrating:
    \begin{AlignedEquation}
        \int_0^1 G_\chi(0,0)(h) \cdot \varphi_n \, dy = \int_0^1 D h \varphi_n'' \, dy + (\kappa_n - 1) \int_0^1 h \varphi_n \, dy = 0,
    \end{AlignedEquation}
    where we used integration by parts, $\varphi_n'' = -\mu_n \varphi_n$, $\kappa_n - 1 = D\mu_n$, and $\int_0^1 \varphi_n \, dy = 0$.

    For the reverse inclusion, let $g \in Y$ with $\langle g, \varphi_n \rangle = 0$. We seek $h \in X$ such that $G_\chi(0,0)(h) = g$. Expanding in the eigenbasis of $\mathcal{L}(\kappa_n)$: $g = \sum_{j=0}^{\infty} g_j \varphi_j$ with $g_n = 0$, we obtain
    \begin{AlignedEquation}
        h = -g_0 + \sum_{j \neq n} \frac{g_j}{\kappa_n - 1 - D\mu_j} \varphi_j.
    \end{AlignedEquation}
    This is well-defined since $\kappa_n - 1 - D\mu_j = D(\mu_n - \mu_j) \neq 0$ for $j \neq n$.
    Furthermore, we only use the terms of the eigenbasis which satisfy the symmetry condition $\varphi(x) = \varphi(1-x)$ as $g\in Y$ and thus by regularization of $(\partial_{xx} -a)^{-1}$ we have $h\in X$.     Therefore, $\text{ran } G_\chi(0,0) = \text{span}\{\varphi_n\}^\perp$ has codimension one. Since the kernel is one-dimensional and the range has codimension one, $G_\chi(0,0)$ is a Fredholm operator of index zero.

    For the transversality condition of the Crandall-Rabinowitz theorem, observe that 
    \begin{AlignedEquation}
    G_{\chi \alpha}(0,0)(\varphi_n) = \varphi_n - \int_0^1 \varphi_n \, dy = \varphi_n \notin \operatorname{ran} G_\chi(0,0),    
    \end{AlignedEquation}
    since $\int_0^1 \varphi_n \, dy = 0$ for $n \geq 1$.
    
    Having verified all conditions (trivial branch, Fredholm property with index zero, one-dimensional kernel, and transversality condition), we obtain the existence of a branch $\big(\chi_n(s), \alpha_n(s)\big)$ of solutions satisfying $G(\chi_n(s), \alpha_n(s)) = 0$ for all $s \in (-\delta, \delta)$ with $\big(\chi_n(0), \alpha_n(0)\big) = (0,0)$. More precisely, there exists an open neighborhood $B_1 \subset B$ of $(0,0)$ and smooth functions $\alpha_n : (-\delta, \delta) \to \mathbb{R}$, $z_n : (-\delta, \delta) \to X$ with $\alpha_n(0) = z_n(0) = 0$ such that 
    \begin{AlignedEquation}
        B_1 \cap G^{-1}(0) = \{(s \varphi_n + s z_n(s), \alpha_n(s)) \mid -\delta < s < \delta\} \cup \{(0, s) \mid -\varepsilon < s < \varepsilon\}.
    \end{AlignedEquation}
    Consequently, $\Big(\bar\Phi_n + \alpha_n(s) + s \varphi_n + s z_n(s), \kappa_n + \alpha_n(s)\Big)$ constitutes a branch of solutions to problem \eqref{equ_varphi_Stat}.     Since $\varphi_n$ is nonconstant and $z_n(s) \to 0$ as $s \to 0$, these solutions are nonconstant for all $s \neq 0$ with sufficiently small $|s|$.
\end{proof}

\subsection{Direction of branches (sub- and supercritical pitchfork)}

To determine the direction of bifurcation, we analyze the behavior of the bifurcation parameter $\alpha_n(s)$ near $s = 0$.
Define the function
\begin{align*}
    \mathcal{G}(s) := G(s\varphi_n + sz_n(s), \alpha_n(s)) \equiv 0
\end{align*}
and differentiate it with respect to $s$ to extract the behavior of $\alpha_n(s)$ near the bifurcation point.

\begin{lemma}
    The branch of solutions $\Gamma_n(s) = \Big(\bar{\Phi}_n + \alpha_n(s) + s \varphi_n + s z_n(s), \kappa_n + \alpha(s)\Big)$ constructed in Theorem \ref{thm:BifurcationBranch} has the asymptotic expansion

\begin{AlignedEquation}
    \Gamma_n(s) \approx \Big(\kappa_n + \varphi_n s + \big(\alpha_n''(0) + z_n'(0)\big) s^2, \ \kappa_n + \alpha_n''(0) s^2\Big),
\end{AlignedEquation}
with
\begin{AlignedEquation} 
    z_n'(0) = \frac{\kappa_n}{24 \pi^2 n^2 D} \cos(4\pi n x)\quad \text{and} \quad \alpha_n''(0) = \frac{1}{4} - \frac{1}{16 D n^2 \pi^2} + 2 D n^2 \pi^2.
\end{AlignedEquation}
\end{lemma}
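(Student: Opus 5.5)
The plan is to extract the coefficients by implicit differentiation of the identity
\[
\mathcal G(s) = G\big(\chi(s),\alpha_n(s)\big)\equiv 0,\qquad \chi(s):=s\varphi_n+s z_n(s).
\]
We expand $\chi(s)=s\varphi_n+s^2 z_n'(0)+\tfrac{s^3}{2}z_n''(0)+\mathcal O(s^4)$ and $\alpha_n(s)=\alpha_n'(0)s+\tfrac12\alpha_n''(0)s^2+\mathcal O(s^3)$. We then match powers of $s$ up to order three, projecting each order onto $\ker G_\chi(0,0)=\mathrm{span}\{\varphi_n\}$ and onto its complement $\mathrm{span}\{\varphi_n\}^\perp=\operatorname{ran}G_\chi(0,0)$, as established in the proof of Theorem~\ref{thm:BifurcationBranch}.

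\textbf{Expansion of the nonlinearity.} Write $N(\chi):=e^\chi/\int_0^1 e^\chi\,dy-1$ and expand it to third order in $\chi$. We will check inductively that the relevant components $\varphi_n$ and $z_n'(0)$ have zero mean, so all terms containing $\int\chi$ drop out. This leaves
\[
N(\chi)=\chi+\tfrac12\Big(\chi^2-\int\chi^2\Big)+\tfrac16\Big(\chi^3-\int\chi^3\Big)-\tfrac12\,\chi\int\chi^2+\dots
\]
Also $G(\chi,\alpha)=D\chi_{xx}-\chi+\kappa_n N(\chi)+\alpha N(\chi)$. Order $s$ then just reproduces $G_\chi(0,0)\varphi_n=0$.

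\textbf{Order $s^2$.} The $s^2$ coefficient is
\[
G_\chi(0,0)z_n'(0)+\alpha_n'(0)\varphi_n+\tfrac{\kappa_n}{2}\Big(\varphi_n^2-\int\varphi_n^2\Big)=0.
\]
Since $\varphi_n^2=1+\cos(4\pi nx)$, the last bracket equals $\cos(4\pi nx)$, which is orthogonal to $\varphi_n$. Pairing with $\varphi_n$ therefore gives $\alpha_n'(0)=0$, which is the pitchfork symmetry. On $\cos(4\pi nx)$ the operator $G_\chi(0,0)$ acts as multiplication by
\[
\kappa_n-1-16\pi^2n^2D=-12\pi^2n^2D.
\]
Inverting on $\operatorname{ran}G_\chi(0,0)$ and normalizing $z_n$ so that $z_n(s)\perp\varphi_n$ yields $z_n'(0)=\frac{\kappa_n}{24\pi^2n^2D}\cos(4\pi nx)$, which has zero mean as claimed.

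\textbf{Order $s^3$.} Pair the $s^3$ coefficient with $\varphi_n$. The term $G_\chi(0,0)z_n''(0)$ disappears because $\operatorname{ran}G_\chi(0,0)\perp\varphi_n$. The remaining contributions are:
\begin{itemize}
\item the transversality term $\alpha_n''(0)\langle\varphi_n,\varphi_n\rangle$, up to the factor fixed by the paper's normalization of $\alpha_n''(0)$;
\item the quadratic cross term $\kappa_n\big\langle \varphi_n z_n'(0),\varphi_n\big\rangle$;
\item the cubic terms $\kappa_n\big\langle \tfrac16\varphi_n^3-\tfrac12\varphi_n\!\int\varphi_n^2,\ \varphi_n\big\rangle$.
\end{itemize}
These are elementary trigonometric integrals: $\int\varphi_n^4=3/2$, $\int\varphi_n^2=1$, and $\int\varphi_n^2\cos(4\pi nx)=1/2$. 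They give a linear equation for $\alpha_n''(0)$. Substituting $\kappa_n=1+4\pi^2n^2D$ and simplifying should produce the stated value $\frac14-\frac{1}{16Dn^2\pi^2}+2Dn^2\pi^2$. Finally, inserting $\alpha_n$ and $\chi$ into $\Gamma_n(s)$ gives the claimed expansion, using $\bar\Phi_n=\kappa_n$.

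\textbf{Main obstacle.} The hard part is the third-order bookkeeping. One must keep every nonlocal contribution from the denominator $\int e^\chi$, including the $-\tfrac12\chi\int\chi^2$ term and the cross term between $\varphi_n$ and $z_n'(0)$. One must also track consistently the normalization convention, $\alpha_n''(0)s^2$ versus $\tfrac12\alpha_n''(0)s^2$, since it changes the final constant by a factor of two. I would first carry out the computation with the exact Taylor convention, and only then reconcile the resulting constant with the normalization used in the statement.
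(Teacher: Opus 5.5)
\textbf{The final step fails: completed, your computation does not give the stated value.} Your route is the paper's route: implicit differentiation of $\mathcal G(s)$ to third order, with projection onto $\varphi_n$. Orders $s$ and $s^2$ are correct, including $\alpha_n'(0)=0$ and $z_n'(0)$. You also rightly make explicit the normalization $z_n(s)\perp\varphi_n$, which the paper uses tacitly. The gap is the last step. You assert that the $s^3$ balance ``should produce the stated value'' and plan to absorb any mismatch into the $\alpha_n''(0)s^2$ versus $\tfrac12\alpha_n''(0)s^2$ convention. Carried out, it does not produce that value, and no factor-of-two convention fixes it. In your own expansion, the $\varphi_n$-component of the $s^3$ coefficient is
\[
\tfrac12\alpha_n''(0)+\kappa_n\Big(\tfrac{A}{2}+\tfrac14-\tfrac12\Big)=0,\qquad A=\frac{\kappa_n}{24\pi^2n^2D}.
\]
Writing $x:=4\pi^2n^2D$, so that $\kappa_n=1+x$, this gives
\[
\alpha_n''(0)=\kappa_n\Big(\tfrac12-A\Big)=\frac{(1+x)(2x-1)}{6x}=\frac16-\frac{1}{24Dn^2\pi^2}+\frac43 Dn^2\pi^2 .
\]
The $s^2$-coefficients in $\Gamma_n(s)$ are then $\tfrac12\alpha_n''(0)$ in the second component and $\tfrac12\alpha_n''(0)+z_n'(0)$ in the first. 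The stated constant equals $\frac{(1+x)(2x-1)}{4x}$. That is $\tfrac32$ times the true second derivative and $3$ times the true $s^2$-coefficient, so it matches under neither convention. A Lyapunov--Schmidt reduction of $\mathcal J$ confirms this independently: the reduced energy is $\tfrac12(\kappa_n-\kappa)s^2+\kappa\big(\tfrac1{16}-\tfrac A8\big)s^4+\dots$, which gives $\kappa-\kappa_n\approx\kappa_n\big(\tfrac14-\tfrac A2\big)s^2$.

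\textbf{Where the paper's value comes from, and what you can actually prove.} The paper reaches its constant because its third-derivative identity contains $2\alpha_n''(0)\varphi_n$. The chain rule instead gives $3G_{\chi\alpha}(0,0)[\varphi_n]\,\alpha_n''(0)=3\alpha_n''(0)\varphi_n$, coming from $\tfrac{d^3}{ds^3}\big[\alpha_n(s)\chi(s)\big]_{s=0}=3\alpha_n''(0)\chi'(0)$. With $3$ in place of $2$, the paper's own identity reads $0=\tfrac32\kappa_n-3\kappa_n+3\kappa_nA+3\alpha_n''(0)$, and this yields exactly the value above. So your argument, completed, proves $\alpha_n'(0)=0$, the stated $z_n'(0)$, and a corrected $\alpha_n''(0)$, not the stated one. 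What survives is the sign. Both expressions are positive multiples of $2x-1$, vanishing exactly at $D=1/(8\pi^2n^2)$, i.e.\ $\kappa_n=3/2$. That sign is all that Theorem~\ref{thm:BifurcationType} uses. You should write down the corrected constant rather than defer to a ``reconciliation'' that cannot succeed.
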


\begin{proof}
    We differentiate $\mathcal{G}(s)$ with respect to $s$ up to third order. The derivatives of $G$ with respect to $\chi$ and $\alpha$ are given in Appendix \ref{sec:deriv_G}.

    \textbf{First derivative:}
    Differentiating $\mathcal{G}(s)$ with respect to $s$, we obtain
    \begin{AlignedEquation}
        G_\chi(s\varphi_n + sz_n(s), \alpha_n(s))(\varphi_n + z_n(s) + sz_n'(s)) + G_\alpha(s\varphi_n + sz_n(s), \alpha_n(s))\alpha_n'(s) = 0.
    \end{AlignedEquation}
    Evaluating at $s = 0$ and using $z_n(0) = 0$, $\alpha_n(0) = 0$, and $G_\alpha(0,0) = 0$, we obtain
    \begin{AlignedEquation}
        G_\chi(0,0)(\varphi_n) = 0,
    \end{AlignedEquation}
    which holds since $\varphi_n \in \ker G_\chi(0,0)$. This does not determine $\alpha_n'(0)$, so we proceed to the second derivative.
    
    \textbf{Second derivative:}
    Differentiating twice and evaluating at $s = 0$, using $z_n(0) = 0$, $\alpha_n(0) = 0$, $\int \varphi_n \, dy = 0$, $G_\alpha(0,0) = 0$, and $G_{\alpha\alpha}(\chi, \alpha) \equiv 0$, we obtain
    \begin{AlignedEquation}
        0 &= G_{\chi\chi}(0,0)(\varphi_n, \varphi_n) + 2G_{\chi\alpha}(0,0)(\varphi_n)\alpha_n'(0) + 2G_\chi(0,0)(z_n'(0)) \\
        &= 2\big(Dz_n'(0)_{xx} + (\kappa_n - 1)z_n'(0)\big) - 2\kappa_n \int z_n'(0) \, dy + \kappa_n\left(\varphi_n^2 - \int \varphi_n^2 \, dy\right) + 2\varphi_n\alpha_n'(0).
    \end{AlignedEquation}
    Since $z_n'(0)$ is periodic, we obtain $\int z_n'(0) \, dy = 0$ and hence, the corresponding equation can be rewritten as
    \begin{AlignedEquation}
        \label{eq:GSecDerCon1}
        2(D z_n'(0)_{xx} + (\kappa_n -1) z_n'(0)) = -\kappa_n \left(\varphi_n^2 - \int \varphi_n^2 d y\right) - 2 \varphi_n \alpha_n'(0).
    \end{AlignedEquation}
    Multiplying equation \eqref{eq:GSecDerCon1} by $\varphi_n$, integrating over the domain and using the identities $\varphi_n'' = -\mu_n \varphi_n$ and $\kappa_n - 1 = D\mu_n$ together with $\int \varphi_n^3 = \int\varphi_n = 0$ we conclude
    % The solvability condition $F\perp \ker L ^* = \ker L = \mathrm{span}\{\varphi_n\}$ yields
    \begin{AlignedEquation}
        0  = - 2 \alpha_n'(0) \int \varphi_n^2 dy
    \end{AlignedEquation}
    which implies $\alpha_n'(0) = 0$. To find an explicit expression for $z_n'(0)$, we solve
    \begin{AlignedEquation}
        D z_n'(0)_{xx} + (\kappa_n - 1) z_n'(0) &= - \frac{\kappa_n}{2} \left(\varphi_n^2 - \int \varphi_n^2 dy\right) = -  \frac{\kappa_n}{2} \cos(4n\pi x),
    \end{AlignedEquation}
    where we used $\varphi_n(x)^2 = 2\cos^2(2\pi nx) = 1 + \cos(4\pi nx)$ and $\int \varphi_n^2 \, dy = 1$.
    The solution has the explicit form
    \begin{AlignedEquation}
        z_n'(0) = A \cos(4n\pi x), \quad \text{with } - D A (4n\pi)^2 + (\kappa_n -1) A = -  \frac{\kappa_n}{2}.
    \end{AlignedEquation}
    Solving for $A$ and using $\kappa_n-1 = D\mu_n = D 4 \pi^2n^2$ gives
    $A = {\kappa_n}({24 n^2 \pi^2 D})^{-1}$.
    
    \textbf{Third derivative:}
    Differentiating $\mathcal G(s)$ three times and evaluating at $s=0$, noting that $\int z_n''(0) dy = 0$, in analogy with $\int z_n'(0) dy = 0$ and using the properties derived from the second derivative, we obtain
    \begin{AlignedEquation}
    \label{eq:G3Der}
        0 &= 3 (D z_n''(0)_{xx} + (\kappa_n - 1) z_n''(0)) + \kappa_n \left(\varphi_n^3 - 3\varphi_n \int \varphi_n^2 dy\right) \\
        &+ 6 \kappa_n \left(\varphi_n z_n'(0) - \int \varphi_n z_n'(0) d y\right)  + 2 \alpha_n''(0) \varphi_n.
    \end{AlignedEquation}
    %  we arrive at the following solvability condition, which by orthogonality requires that $F\perp \varphi_n$, where
    % \begin{AlignedEquation}
    %     \label{eq:G3Der}
    %     F := \kappa_n \left(\varphi_n^3 - 3\varphi_n \int \varphi_n^2 dy\right) + 6 \kappa_n \left(\varphi_n z'(0) - \int \varphi_n z'(0) d y\right) + 2 \alpha''(0) \varphi_n.
    % \end{AlignedEquation}
    Analogously, we multiply equation \eqref{eq:G3Der} by $\varphi_n$, integrate, and use indentities 
    \begin{AlignedEquation}
        \int_0^1 \varphi_n^2 dy = 1\quad\int_0^1 \varphi_n^4 dy = 3/2\quad \int_0^1 \varphi_n^2 z_n'(0) dy = 1/2 A 
    \end{AlignedEquation}
    which by direct calculation yields
    \begin{AlignedEquation}
        0 &=  3/2 \kappa_n - 3\kappa_n + 3 \kappa_n A + 2 \alpha_n''(0). 
    \end{AlignedEquation}
    Substituting $A = {\kappa_n}({24 \pi^2 n^2 D})^{-1}$ and $\kappa_n = 1 + D4\pi^2n^2$, we obtain
    \begin{AlignedEquation}
        \label{eq:FormulaAlphaPP}
        \alpha_n''(0) = \frac{1}{4} - \frac{1}{16 D n^2 \pi^2} + 2 D n^2 \pi^2.
    \end{AlignedEquation}
\end{proof}

Having derived an explicit formula for $\alpha_n''(0)$, we can now classify the type of bifurcation. We emphasize that this classification applies only to the local behavior near the bifurcation point and is determined solely by the eigenvalue that crosses zero. Other eigenvalues in the spectrum, although not involved in the bifurcation mechanism, do not affect this local characterization. Importantly, the bifurcation type by itself does not determine the overall linear stability of the bifurcating branch; a full stability assessment requires examining the entire spectrum of the linearized operator along the bifurcating solutions.
% \begin{theorem}
%     Fix $n\in\N$.
%     If $\kappa_n\ne1.5$, then $(\bar\Phi_n,\kappa_n)$ exhibits a pitchfork bifurcation.
%     Using the notation of Theorem \ref{thm:Existence_bif_branches}, the sign of $\alpha''(0)$ determines the type of the pitchfork bifurcation:
%     \begin{itemize}
%         \item Subcritical if $\alpha''(0) < 0$, which occurs when
%         \begin{align*}
%             0 < D < \frac{1}{8 n^2 \pi^2}, \quad \text{equivalently } \kappa_n < 1.5,
%         \end{align*}
%         \item Supercritical if $\alpha''(0) > 0$, i.e., $D> \frac{1}{8 n^2 \pi^2}$ or equivalently $\kappa_n > 1.5$.
%     \end{itemize}
% \end{theorem}
\begin{theorem}
    \label{thm:BifurcationType}
    Fix $n \in \mathbb{N}$. If $\kappa_n \neq 1.5$, then $(\bar\Phi_n, \kappa_n)$ undergoes a pitchfork bifurcation. The type of bifurcation is determined by the sign of $\alpha_n''(0)$:
    \begin{itemize}
        \item subcritical if $\alpha_n''(0) < 0$, i.e., $\kappa_n < 1.5$ $($equivalently, $0 < D < (8 n^2 \pi^2)^{-1})$,
        \item supercritical if $\alpha_n''(0) > 0$, i.e., $\kappa_n > 1.5$ $($equivalently, $D > (8 n^2 \pi^2)^{-1})$.
    \end{itemize}
\end{theorem}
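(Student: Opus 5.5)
The argument will read the direction of the branch off the expansion just obtained: $\kappa_n(s)=\kappa_n+\alpha_n(s)$ with $\alpha_n(0)=\alpha_n'(0)=0$ and $\alpha_n''(0)$ given by \eqref{eq:FormulaAlphaPP}. First, I would record the symmetry of the branch. The nonlinearity in $G$ is invariant under the translation $x\mapsto x+\tfrac{1}{2n}$, which preserves $X$ and $Y$ and maps $\varphi_n$ to $-\varphi_n$. By the local uniqueness part of the Crandall--Rabinowitz theorem, the branch parametrised by $-s$ is the translate of the branch at $s$, so $\alpha_n(-s)=\alpha_n(s)$. Hence $\alpha_n$ is even, every odd derivative at $0$ vanishes, and
\begin{equation*}
\alpha_n(s)=\tfrac12\alpha_n''(0)s^2+\mathcal O(s^4).
\end{equation*}
This is exactly the pitchfork structure: for $\kappa$ on one side of $\kappa_n$ there are two nonconstant solutions $\pm s$, identified up to translation, and on the other side there are none near the bifurcation point. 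This holds provided $\alpha_n''(0)\neq0$.

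Second, I would determine the sign of $\alpha_n''(0)$. Put $t:=4\pi^2n^2D=\kappa_n-1>0$. Then
\begin{equation*}
\alpha_n''(0)=\frac14-\frac{1}{4t}+\frac{t}{2}=\frac{2t^2+t-1}{4t}=\frac{(2t-1)(t+1)}{4t}.
\end{equation*}
Since $t>0$, the sign of $\alpha_n''(0)$ equals the sign of $2t-1$. So $\alpha_n''(0)<0$ exactly when $t<\tfrac12$, which is the same as $\kappa_n<\tfrac32$ and also the same as $D<(8\pi^2n^2)^{-1}$. Likewise $\alpha_n''(0)>0$ exactly when $\kappa_n>\tfrac32$. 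The nondegeneracy condition $\alpha_n''(0)\neq0$ is equivalent to $\kappa_n\neq1.5$, which is the hypothesis of the theorem.

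Third, I would translate the sign into the direction of the branch. If $\alpha_n''(0)<0$, the nonconstant solutions exist for $\kappa<\kappa_n$, the parameter range where the constant state is still stable by Proposition~\ref{thm:Stab_constant_steady_state}; this is the subcritical case. If $\alpha_n''(0)>0$, they exist for $\kappa>\kappa_n$, where the constant state has lost stability; this is the supercritical case.

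The main obstacle is the evenness of $\alpha_n$. The translation argument needs care: a nonconstant solution on the branch might fail to lie in the symmetric space $X$ after translation. I would check this directly, noting that the translate of a function that is even about $0$ is even about $\tfrac{1}{2n}$, and for $\varphi_n$-type profiles with period $1/n$ it is also even about $0$. If this verification becomes delicate, I would fall back on a weaker statement: the sign of $\alpha_n''(0)$ already forces $\alpha_n(s)$ to have a fixed sign for small $s\neq0$, since $\alpha_n'(0)=0$. That alone gives the sub- or supercritical direction, with pitchfork symmetry as a secondary refinement.
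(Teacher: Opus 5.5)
Your core argument is the paper's. The paper also reads the direction off $\kappa_n+\alpha_n(s)$ using $\alpha_n'(0)=0$ and $\alpha_n(s)=\tfrac12\alpha_n''(0)s^2+\mathcal O(s^3)$. It likewise treats $\alpha_n'(0)=0\neq\alpha_n''(0)$ as the meaning of ``pitchfork''. It leaves the sign analysis to a ``direct computation'', which your factorization $\alpha_n''(0)=\frac{(2t-1)(t+1)}{4t}$ with $t=4\pi^2n^2D$ makes explicit. (Incidentally, $G$ is affine in $\alpha$, so the $\alpha$-term in the third derivative of $\mathcal G$ at $s=0$ is $3\alpha_n''(0)\varphi_n$, not $2\alpha_n''(0)\varphi_n$ as in the preceding lemma. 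Redoing that step gives $\alpha_n''(0)=\frac{(2t-1)(t+1)}{6t}$, which differs only by a positive factor, so your sign analysis and the threshold $\kappa_n=\tfrac32$ stand.)

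The evenness step goes beyond the paper and is not needed for the statement; your fallback is exactly the paper's proof. As written, though, the claim that $T\chi(x)=\chi(x+\tfrac1{2n})$ preserves $X$ holds only for $n=1$. For $\chi\in X$ one has $T\chi(-x)=\chi(x-\tfrac1{2n})$, so $T\chi\in X$ requires $\chi$ to be $\tfrac1n$-periodic. For example, with $n=2$, $T$ sends $\sqrt2\cos(2\pi x)\in X$ to $-\sqrt2\sin(2\pi x)\notin X$. You have not shown that the branch consists of $\tfrac1n$-periodic functions.

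To close this gap, apply Crandall--Rabinowitz in the closed subspace $X_n\subset X$ of $\tfrac1n$-periodic functions, with $Y_n$ defined analogously. There the kernel is still $\operatorname{span}\{\varphi_n\}$, the range is $\varphi_n^\perp\cap Y_n$, and transversality is unchanged. With the normalization $z_n(s)\perp\varphi_n$, local uniqueness in $X$ identifies the resulting curve with $\Gamma_n$, parameter for parameter. On $X_n$ the shift $T$ commutes with $G$, preserves $\varphi_n^\perp$ and satisfies $T\varphi_n=-\varphi_n$, so your argument then gives $\alpha_n(-s)=\alpha_n(s)$.

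Finally, your gloss that a subcritical branch lies where the constant state is still stable is correct only for $n=1$; the paper's proof uses the same gloss. For $n\ge2$ the constant state is unstable for all $\kappa$ near $\kappa_n$, since $\kappa_n>\kappa_1$. The correct statement is that the branch lies on the side where the critical eigenvalue $\nu_n=\kappa-\kappa_n$ is negative.
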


\begin{proof}
    Observe that by formula \eqref{eq:FormulaAlphaPP} there holds $\alpha_n''(0) = 0$ if and only if $D = \frac{1}{8 n^2 \pi^2}$, which corresponds to $\kappa_n = 1.5$. Thus, whenever $\alpha_n'(0) = 0$ and $\alpha_n''(0) \neq 0$, a pitchfork bifurcation occurs at $\bar\Phi_n$ for $\kappa = \kappa_n$.

    To determine the type of bifurcation, we examine the behavior of the trivial branch. Since the critical eigenvalue responsible for bifurcation is $\kappa - \kappa_n$, the trivial branch loses stability as $\kappa$ crosses $\kappa_n$. By the principle of exchange of stability, the bifurcation is subcritical if the nontrivial branch enters the region where the trivial solution is stable, i.e., if $\kappa_n + \alpha_n(s) < \kappa_n$ for small $s > 0$. Conversely, it is supercritical if $\kappa_n + \alpha_n(s) > \kappa_n$. Using the expansion
    $\alpha_n(s) = \tfrac{1}{2} \alpha_n''(0) s^2 + \mathcal  O(s^3)$,
    it follows that $\alpha_n''(0) < 0$ implies a subcritical bifurcation, while $\alpha_n''(0) > 0$ implies a supercritical one. The equivalence between the sign of $\alpha_n''(0)$ and the conditions on $D$ and $\kappa_n$ follows from direct computation.
\end{proof}
\noindent To conclude about the stability of the branch, we consider the bifurcation point $\kappa_1$, namely a point at which the trivial branch changes stability.

% \begin{theorem}
%     Fix $(\bar\Phi_n,\kappa_n)$ for some $n \in \mathbb{N}$, and let $\Gamma_n(s)$ denote the bifurcating branch. Then, for $s$ sufficiently small, the following holds:
%     \begin{itemize}
%         \item For $n \geqslant 2$, the branch $\Gamma_n(s)$ is unstable;
%         \item For $n = 1$ and $\kappa_1 < 1.5$, the branch $\Gamma_1(s)$ is unstable;
%         \item For $n = 1$ and $\kappa_1 > 1.5$, the branch $\Gamma_1(s)$ is stable.
%     \end{itemize}
% \end{theorem}

\begin{theorem}\label{thm:stability_bifurcating_branches}
    Fix $(\bar{\Phi}_n, \kappa_n)$ for some $n \in \mathbb{N}$, and let $\Gamma_n(s) = (\Phi_n(s), \kappa_n(s))$ denote the bifurcating branch. Then, for sufficiently small $|s|$, the following stability properties hold:
    \begin{itemize}
        \item For $n \geq 2$, the branch $\Gamma_n(s)$ consists of linearly unstable solutions;
        \item For $n = 1$ and $\kappa_1 < 1.5$, the branch $\Gamma_1(s)$ consists of linearly unstable solutions;
        \item For $n = 1$ and $\kappa_1 > 1.5$, the branch $\Gamma_1(s)$ consists of linearly stable solutions.
    \end{itemize}
\end{theorem}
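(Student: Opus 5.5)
The approach is perturbative: track the full spectrum of the nonlocal linearization $\mathcal{L}$ along $\Gamma_n(s)$ for small $|s|$, starting from the spectrum at the bifurcation point. At $(\bar\Phi_n,\kappa_n)$, Proposition~\ref{thm:Stab_constant_steady_state} gives the eigenvalues $\nu_0=-1$ and $\nu_k=\kappa_n-1-D\mu_k=D(\mu_n-\mu_k)$, each of multiplicity two for $k\ge1$ (sine and cosine). Hence at $s=0$ there are exactly $2(n-1)$ strictly positive eigenvalues (modes $k=1,\dots,n-1$), a double zero eigenvalue (modes $\sqrt2\cos(2\pi nx)$ and $\sqrt2\sin(2\pi nx)$), and the rest of the spectrum is strictly negative with a uniform gap. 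By Lemma~\ref{lem:PropSpecL}, $\mathcal{L}$ is self-adjoint with compact resolvent, and its coefficients $A(x)$ and $C(x)$ depend continuously (indeed smoothly, by Theorem~\ref{thm:BifurcationBranch}) on $s$ in $L^\infty$. Standard perturbation theory for isolated eigenvalues of self-adjoint operators therefore shows that eigenvalues bounded away from zero keep their sign for $|s|$ small.

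\textbf{Case $n\ge2$.} The eigenvalue $\nu_1=D(\mu_n-\mu_1)>0$ persists as a positive eigenvalue of $\mathcal{L}(s)$ for $|s|$ small, so the branch is linearly unstable. An independent route is to note that for $s\neq0$ the solution $\Phi_n(s)\approx\kappa_n+s\sqrt2\cos(2\pi nx)$ has a derivative with $2n\ge4$ zeros in $[0,1)$, so Theorem~\ref{thm:Sufficient_conds_instability} applies directly. I would present the second argument and keep the first as a remark, because zero counting needs some care: one has to verify that $\partial_x\Phi_n(s)=s(-2\sqrt2\pi n\sin(2\pi nx))+\mathcal O(s^2)$ has simple zeros, which follows after dividing by $s$ and invoking $C^1$ closeness, available since $z_n\in X\subset C^1$.

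\textbf{Case $n=1$.} All eigenvalues other than the double zero remain strictly negative for small $|s|$, so only the two near-zero eigenvalues matter.
\begin{itemize}
\item The sine direction corresponds to translation invariance. Since $U_x$ solves the local problem with $\lambda=0$, and $\int C\,U_x=\int (e^U)_x=0$, it is an exact nonlocal eigenfunction with eigenvalue $0$ for every $s$. This neutral eigenvalue is the one discarded in the sense of orbital (Lyapunov) stability, as in Theorem~\ref{thm:ExistenceStationary}.
\item The remaining eigenvalue $\nu(s)$, which lives within the symmetric space $Y$, is the critical eigenvalue of the Crandall--Rabinowitz setting.
\end{itemize}
For $\nu(s)$ I would apply the exchange-of-stability formula (Crandall--Rabinowitz, see \cite[Section I.7]{Kielhoefer}). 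The trivial branch has critical eigenvalue $\mu(\alpha)=\alpha$ with $\mu'(0)=1$. The perturbed eigenvalue satisfies $\nu(s)\sim -s\,\alpha_1'(s)\mu'(0)$, so $\nu(s)=-\alpha_1''(0)s^2+o(s^2)$. Its sign is therefore $-\operatorname{sign}\alpha_1''(0)$, and the formula \eqref{eq:FormulaAlphaPP} together with Theorem~\ref{thm:BifurcationType} finishes the argument: $\nu(s)<0$ (stable) for $\kappa_1>1.5$, and $\nu(s)>0$ (unstable) for $\kappa_1<1.5$.

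\textbf{Main obstacle.} The delicate step is justifying the exchange-of-stability formula. One must check that $G_\chi(\chi,\alpha)$ restricted to $X\to Y$ coincides with $\mathcal{L}$ on symmetric functions. One must also show that the neutral antisymmetric eigenvalue does not interfere: the splitting $L^2=Y\oplus Y^\perp$ into even and odd functions about $x=1/2$ is invariant under $\mathcal{L}$ because $U$ is even, and the sine mode lies in the odd part. If the formula from \cite{Kielhoefer} is inconvenient to cite, I would instead compute $\nu(s)$ directly, expanding the eigenpair $(\nu(s),\varphi_1+s w_1+s^2w_2)$ in the nonlocal problem \eqref{eq:Nonlocal_problem} with $A$ and $C$ expanded to second order using $z_1'(0)$. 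The solvability conditions would then reproduce $-\alpha_1''(0)$; this reduces to the same Fourier integrals already used in computing $\alpha_n''(0)$.
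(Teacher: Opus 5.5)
Your proposal is correct. For $n=1$ it follows the paper's strategy: continuation keeps the non-critical spectrum negative, and exchange of stability gives the critical eigenvalue the sign $-\operatorname{sign}\alpha_1''(0)$.

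For $n\ge 2$ your preferred argument is different. The paper only continues the positive eigenvalues $D(\mu_n-\mu_k)$, $1\le k<n$, of $\mathcal{L}(\kappa_n)$; counting the sine and cosine modes there are $2(n-1)$ of them, not $n-1$ as the paper states. You instead count zeros of $\partial_x\Phi_n(s)$ and apply Theorem~\ref{thm:Sufficient_conds_instability}. That route avoids spectral perturbation theory and is more robust: it gives instability wherever along the continuum the solution keeps at least three critical points, not just near the bifurcation point. Also, $C^0$-smallness of $\partial_x z_n(s)$ (from $X\subset C^1$) already yields $2n$ sign changes, so you do not need the zeros to be simple.

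Your $n=1$ treatment is also sharper than the paper's, which asserts that $0$ is a simple eigenvalue of $\mathcal{L}(\kappa_1)$. That holds only in $Y$. In $L^2(\T)$ the mode $\sqrt2\sin(2\pi x)$ supplies a second zero eigenvalue, and it persists exactly along the branch as the translation mode $U_x$. Your even/odd splitting about $x=1/2$ isolates this mode. The Crandall--Rabinowitz exchange-of-stability formula on $Y$ then makes the paper's informal ``principle of exchange of stability'' rigorous. You also rightly state that the third bullet can only mean stability modulo translations.

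Only the sign of $\alpha_1''(0)$ enters, and this matters for your fallback direct expansion. The $\alpha$-term in the third $s$-derivative of $\mathcal{G}$ is $3\alpha_n''(0)\varphi_n$, not $2\alpha_n''(0)\varphi_n$. So \eqref{eq:FormulaAlphaPP} overstates $\alpha_n''(0)$ by a factor $3/2$, although its sign and the threshold $\kappa_n=3/2$ are unaffected.
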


\begin{proof}
    At the bifurcation point $(\bar{\Phi}_n, \kappa_n)$ with $n \geq 2$, Proposition~\ref{thm:Stab_constant_steady_state} shows that the linearized operator $\mathcal{L}(\kappa_n)$ possesses exactly $n-1$ positive eigenvalues in addition to the simple zero eigenvalue. By the continuation principle, these positive eigenvalues persist along the bifurcating branch $\Gamma_n(s)$ for sufficiently small $|s|$. Consequently, regardless of the sign of the critical eigenvalue along the branch, the solutions $\Phi_n(s)$ remain linearly unstable in a neighborhood of the bifurcation point.

     For the primary bifurcation at $(\bar{\Phi}_1, \kappa_1)$, the linearized operator $\mathcal{L}(\kappa_1)$ has a simple zero eigenvalue and all other eigenvalues are negative. The stability of the bifurcating branch $\Gamma_1(s)$ is therefore determined exclusively by the behavior of the critical eigenvalue along the nontrivial branch. The assertion follows from an analogous argument to that employed in Theorem~\ref{thm:BifurcationType}.
\end{proof}

\subsection{Global continuation and turning points}

Fix a bifurcation point $(\bar{\Phi}_n, \kappa_n)$ for some $n \in \mathbb{N}$. We now establish that the local branch $\Gamma_n(s)$ obtained in Theorem~\ref{thm:BifurcationBranch} extends globally.

To apply the global bifurcation theory of Rabinowitz, we reformulate the steady-state equation \eqref{equ_varphi_Stat} as a fixed-point problem. Specifically, \eqref{equ_varphi_Stat} is equivalent to
\begin{equation}
    0 = \varphi + (D\partial_{xx} - I)^{-1}\left(\kappa\frac{e^{\varphi}}{\int_{\mathbb{T}} e^{\varphi}\,dy}\right) =: \varphi + \kappa f(\varphi) =: H(\varphi, \kappa).
\end{equation}

Let $\mathcal{S}$ denote the closure of nontrivial solutions to $H(\varphi, \kappa) = 0$ in $W^{1,2}(\mathbb{T}) \times \mathbb{R}$. Note that $\Gamma_n(s) \subset \mathcal{S}$, and by elliptic regularity, $\mathcal{S} \subset W^{2,2}(\mathbb{T}) \times \mathbb{R}$.

\begin{theorem}[Global bifurcation]\label{thm:global_branches}
    Let $\mathcal{C}$ be the maximal connected component in $\mathcal{S}$ containing $(\bar{\Phi}_n, \kappa_n)$. Then $\mathcal{C}$ satisfies one of the following alternatives:
    \begin{enumerate}
        \item[\rm(i)] $\mathcal{C}$ is unbounded in $W^{1,2}(\mathbb{T}) \times \mathbb{R}$;
        \item[\rm(ii)] $\mathcal{C}$ contains a point $(\bar{\Phi}(\kappa), \kappa)$ on the trivial branch for some $\kappa \neq \kappa_n$.
    \end{enumerate}
\end{theorem}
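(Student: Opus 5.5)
We will verify the hypotheses of Rabinowitz's global bifurcation theorem (in the form of \cite[Theorem II.5.8]{Kielhoefer}) for the map $H(\varphi,\kappa)=\varphi+\kappa f(\varphi)$. To have a fixed trivial branch, we first shift to $\chi=\varphi-\bar\Phi(\kappa)$; the quotient $e^\varphi/\int e^\varphi$ is invariant under adding constants, so the problem becomes
\[
\chi = -(D\partial_{xx}-I)^{-1}\Big(\kappa\Big(\frac{e^\chi}{\int e^\chi dy}-1\Big)\Big)=:F(\chi,\kappa).
\]
The trivial branch is now $\chi\equiv 0$. We work in $X_1:=\{\chi\in W^{1,2}(\T):\chi(x)=\chi(1-x)\}$, so that kernels are one-dimensional, as in Theorem~\ref{thm:BifurcationBranch}.

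\textbf{Step 1: compactness.} For $\chi$ in a bounded set of $W^{1,2}(\T)\subset L^\infty(\T)$, the nonlinearity $e^\chi/\int e^\chi$ is bounded in $L^\infty$. Moreover, the map $\chi\mapsto e^\chi/\int e^\chi$ is continuous $W^{1,2}\to L^2$ (the argument is the one used in Lemma~\ref{thm:WeaklyLower}). The resolvent $(D\partial_{xx}-I)^{-1}$ maps $L^2$ boundedly into $W^{2,2}$, which embeds compactly into $W^{1,2}$. Hence $F:X_1\times\R\to X_1$ is compact and continuous, and $F(\chi,\kappa)=\kappa K\chi+o(\|\chi\|)$ uniformly on bounded $\kappa$-intervals, where $K\chi=-(D\partial_{xx}-I)^{-1}(\chi-\int\chi)$ is a compact linear operator. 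Symmetry $x\mapsto1-x$ is preserved, as checked for $G$.

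\textbf{Step 2: odd crossing number.} On the symmetric subspace, $K$ has eigenvalues $1/(1+D\mu_k)$ on $\cos(2\pi kx)$ for $k\ge1$ and $0$ on constants, all simple. Thus $I-\kappa K$ is singular exactly at $\kappa=\kappa_k$. As $\kappa$ crosses $\kappa_n$, exactly one eigenvalue $1-\kappa/\kappa_n$ changes sign, so the Leray--Schauder index $\deg(I-\kappa K,B,0)=(-1)^{\#\{k:\kappa_k<\kappa\}}$ changes sign. This is Rabinowitz's hypothesis that $\kappa_n$ is a characteristic value of odd algebraic multiplicity. The Rabinowitz alternative then yields the claim. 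Either the component is unbounded, or it returns to the trivial line at another characteristic value $\kappa_k$, $k\neq n$. Transforming back via $\varphi=\bar\Phi(\kappa)+\chi$ gives (i) or (ii). Boundedness in $W^{1,2}\times\R$ is unaffected by the shift on bounded $\kappa$-sets, and if $\kappa$ is unbounded the set is unbounded anyway.

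\textbf{Main obstacle.} The delicate point is reconciling the topologies. The set $\mathcal S$ is taken as a closure in $W^{1,2}$ without the symmetry constraint, whereas simplicity of the eigenvalue requires restricting to $X_1$. I would apply the theorem in the symmetric space and note two facts. First, the symmetric component is contained in the full component $\mathcal C$. Second, if the symmetric component satisfies (i) or (ii), so does $\mathcal C$. Alternatively, on the full space the kernel at $\kappa_n$ is two-dimensional (sine and cosine), so the multiplicity is even and Rabinowitz does not apply directly; this is why the symmetric reduction is essential.

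One further technical check concerns the Leray--Schauder setup. I would verify that $\chi\mapsto \kappa(e^\chi/\int e^\chi-1)$ is Fréchet differentiable at $0$ from $W^{1,2}$ to $L^2$ with derivative $\kappa(\chi-\int\chi)$. This follows from a Taylor expansion of the exponential using $L^\infty$ control.
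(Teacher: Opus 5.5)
Your argument is correct and uses the same tool as the paper: Rabinowitz's global alternative for the compact fixed-point form $\varphi+\kappa f(\varphi)=0$. Your functional setting, however, differs from the paper's in ways that matter.

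The paper checks the hypotheses directly in $W^{1,2}(\T)\times\R$ for $H(\varphi,\kappa)$. It asserts $\ker D_\varphi H(\bar\Phi_n,\kappa_n)=\operatorname{span}\{\varphi_n\}$ by appeal to Theorem~\ref{thm:BifurcationBranch}, but that theorem worked in the symmetric space $X$. On all of $W^{1,2}(\T)$ the kernel also contains $\sin(2n\pi x)$. So the simple-eigenvalue (odd multiplicity) hypothesis fails there, and the Leray--Schauder index does not change sign. The paper's $H$ also has the moving trivial branch $\varphi\equiv\kappa$ rather than a fixed one.

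Three steps in your proposal supply exactly what the paper's shorter verification omits:
\begin{enumerate}
\item The shift $\chi=\varphi-\bar\Phi(\kappa)$ gives a fixed trivial branch.
\item Restricting to the reflection-symmetric subspace makes each $\kappa_k$ a simple characteristic value of $K$, and you compute the index explicitly as $(-1)^{\#\{k:\kappa_k<\kappa\}}$.
\item The symmetric component is a connected subset of $\mathcal S$ containing $(\bar\Phi_n,\kappa_n)$. Hence it lies in $\mathcal C$, and either alternative passes to $\mathcal C$.
\end{enumerate}

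The paper's version buys brevity. Yours actually meets the hypotheses of the abstract theorem. It also yields a slightly sharper conclusion: in case (ii) the returning point is some $\kappa_k$ with $k\neq n$.
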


\begin{proof}
    We verify the assumptions of the Global Crandall-Rabinowitz Theorem \cite[special case of Theorem II.3.3]{Kielhoefer}.
    % (Theorem~\ref{thm:Global_Crandall_Rabinowitz})
    Since $W^{1,2}(\mathbb{T}) \subseteq L^{\infty}(\mathbb{T})$, we have $e^{\varphi} \in L^{\infty}(\mathbb{T})$ for all $\varphi \in W^{1,2}(\mathbb{T})$. By elliptic regularity and the compact embedding $W^{2,2}(\mathbb{T}) \subseteq W^{1,2}(\mathbb{T})$, the operator
    \begin{AlignedEquation}
        (D\partial_{xx} - I)^{-1}: L^2(\mathbb{T}) \to W^{2,2}(\mathbb{T}) \subseteq W^{1,2}(\mathbb{T})
    \end{AlignedEquation}
    is compact. Therefore, $f: W^{1,2}(\mathbb{T}) \to W^{1,2}(\mathbb{T})$ is well-defined and completely continuous.
    
    To establish continuity of the linearization, we compute $D_{\varphi}H(\bar{\Phi}_n, \kappa) = I + \kappa D_{\varphi}f(\bar{\Phi}_n)$. At the constant state $\bar{\Phi}_n \equiv \kappa_n$, the linearization yields
    \begin{AlignedEquation}
        D_{\varphi}f(\bar{\Phi}_n)\psi = (D\partial_{xx} - I)^{-1}\left(\psi - \int_{\mathbb{T}} \psi\,dy\right).
    \end{AlignedEquation}
    Standard elliptic estimates and Sobolev embedding give
    \begin{AlignedEquation}
        |D_{\varphi}f(\bar{\Phi}_n)\psi\|_{W^{1,2}} \leq C\left\|\psi - \int_{\mathbb{T}} \psi\,dy\right\|_{L^2} \leq C\|\psi\|_{W^{1,2}},
    \end{AlignedEquation}
    which proves $D_{\varphi}H(\bar{\Phi}_n, \cdot) \in C(\mathbb{R}, \mathcal{L}(W^{1,2}(\mathbb{T}), W^{1,2}(\mathbb{T})))$.
    
    Finally, the analysis in Theorem~\ref{thm:BifurcationBranch} shows that $\ker(D_{\varphi}H(\bar{\Phi}_n, \kappa_n)) = \operatorname{span}\{\varphi_n\}$ with $\varphi_n = \sqrt{2}\cos(2n\pi x)$, and $\varphi_n \notin \operatorname{ran}(D_{\varphi}H(\bar{\Phi}_n, \kappa_n))$. Indeed,
    \begin{AlignedEquation}
        G_\chi(0,0)\varphi = 0 \ &\Longleftrightarrow \ D_\varphi H(\bar\Phi_n, \kappa_n) \varphi = 0, \\
        \varphi \in \operatorname{ran}(G_\chi(0,0)) \ &\Longleftrightarrow \ (D\Delta - I)^{-1}\varphi \in \operatorname{ran}(D_\varphi H(\bar\Phi_n, \kappa_n)),
    \end{AlignedEquation}
    and thus zero is a simple eigenvalue of $D_{\varphi}H(\bar{\Phi}_n, \kappa_n)$.
    
    Having verified all assumptions, the conclusion follows. 
    % from Theorem~\ref{thm:Global_Crandall_Rabinowitz}.
\end{proof}

We now analyze the global behavior of the bifurcating branch in the subcritical regime. 

\begin{theorem}[Existence of turning point]\label{thm:fold_bifurcation}
    Suppose $\kappa_1 = 1 + D\mu_1 < 1.5$, corresponding to a subcritical pitchfork bifurcation at $(\bar{\Phi}_1, \kappa_1)$. Then there exists $\hat{s} \in \mathbb{R}$ such that the branch $\Gamma_1(s)$ exhibits a turning point at $(\Phi(\hat{s}), \kappa_f)$, where $\kappa_f = \kappa_1(\hat{s}) \in (0, \kappa_1)$.
\end{theorem}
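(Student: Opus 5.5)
Let $\mathcal{C}$ be the global continuum containing $\Gamma_1$ from Theorem~\ref{thm:global_branches}. The idea is that $\mathcal{C}$ leaves the bifurcation point into $\kappa<\kappa_1$, but cannot stay there, cannot escape to $\kappa\le 0$, and cannot blow up at bounded $\kappa$. It must therefore come back towards larger $\kappa$, and the $\kappa$-coordinate along it attains a minimum $\kappa_f\in(0,\kappa_1)$, which is the turning point.

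\textbf{Step 1 (leaving to the left).} Since $\kappa_1<3/2$, Theorem~\ref{thm:BifurcationType} gives $\alpha_1''(0)<0$. Hence $\kappa_1(s)=\kappa_1+\tfrac12\alpha_1''(0)s^2+\mathcal O(s^3)<\kappa_1$ for small $s\neq0$, so the branch initially enters the half-plane $\kappa<\kappa_1$.

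\textbf{Step 2 (a priori bounds).} I would show that $\mathcal{C}$ is bounded in $W^{1,2}$ on every strip $\kappa\in[a,b]$ with $0<a<b$. Test \eqref{equ_varphi_Stat} against $U$ and use $\int U=\kappa$ from Remark~\ref{Rem:Cst}. Since $p=e^U/\int e^U$ is a probability density,
\[
D\|U_x\|_{L^2}^2+\|U\|_{L^2}^2=\kappa\int_0^1 pU\le \kappa\|U\|_{L^\infty}\le \kappa\big(\kappa+\|U_x\|_{L^2}\big),
\]
where the last inequality uses that $U$ has mean $\kappa$. This gives a bound on $\|U\|_{W^{1,2}}$ depending only on $\kappa$ and $D$, uniformly for $\kappa$ in compacts. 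Consequently, alternative (i) of Theorem~\ref{thm:global_branches} can occur only if $\kappa$ is unbounded along $\mathcal{C}$.

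\textbf{Step 3 (no escape through $\kappa\to0$).} I would show that nonconstant solutions do not exist for $\kappa\le\kappa_*$, for some $\kappa_*>0$. By Lemma~\ref{thm:ZeroOnly} and the remark following it, $\mathcal{J}$ is convex once $D>15\kappa/\mu_1$, i.e.\ for $\kappa<D\mu_1/15$. In that range the constant state is the unique solution, so $\mathcal{C}$ stays inside $\kappa\ge D\mu_1/15>0$. As a fallback, the energy identity of Step~2 with $V=U-\kappa$ gives $D\mu_1\|V\|_{L^2}^2\le\kappa\int pV\le\kappa\|V\|_{L^\infty}$, and this forces $V\to0$ as $\kappa\to0$. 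Since $\kappa_1<3/2$ means $D<1/(8\pi^2)$ is small, I would check the constants carefully here.

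\textbf{Step 4 (the continuum must turn).} Now use alternative (ii). If $\mathcal{C}$ returns to the trivial line, it does so at some $(\bar\Phi(\kappa),\kappa)$ with $\kappa\neq\kappa_1$. The linearization there must be singular, so $\kappa=\kappa_n$ with $n\ge2$, i.e.\ $\kappa>\kappa_1$. If instead $\mathcal{C}$ is unbounded, Steps~2--3 force $\kappa\to\infty$ along it. In both cases $\mathcal{C}$ contains points with $\kappa>\kappa_1$, while near the start it has points with $\kappa<\kappa_1$ and it always satisfies $\kappa\ge\kappa_*$.

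Take the closure of the part of $\mathcal{C}$ lying in $\{\kappa\le\kappa_1\}$. By Step~2 this set is bounded. Because the map $\varphi\mapsto\varphi+\kappa f(\varphi)$ in $H$ is compact, bounded closed subsets of $\mathcal{S}$ are compact, so this set is compact. Hence $\kappa$ attains a minimum $\kappa_f\in[\kappa_*,\kappa_1)$ at some point $(\Phi(\hat s),\kappa_f)$, and that point is nontrivial. By the local analysis, the only point of $\mathcal{C}$ on the trivial line with $\kappa\le\kappa_1$ is the bifurcation point $(\bar\Phi_1,\kappa_1)$.

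At such a minimum, $\mathcal{C}$ cannot be a graph over $\kappa$ near $\kappa_f$. The implicit function theorem then forces $D_\varphi H$ to be singular there, which is exactly the fold.

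\textbf{Main obstacle.} The delicate point is Step~4: identifying this extremal point as a genuine turning point of the branch $\Gamma_1$ parametrized by $s$, rather than an arbitrary point of a possibly non-smooth continuum. I would handle it by continuing $\Gamma_1(s)$ by arclength via the implicit function theorem as long as it is regular. A first time at which $\kappa$ stops decreasing must then be either a fold with $\kappa'(\hat s)=0$, or a singular point of the curve. In the latter case I would simply call $\kappa_f$ the minimal value of $\kappa$ on $\mathcal{C}$, in the weak sense the theorem intends.
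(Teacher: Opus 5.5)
Your proposal is correct and follows essentially the same route as the paper. Both arguments use a priori $W^{1,2}$ bounds on strips of bounded $\kappa$, uniqueness of the constant state for small $\kappa$, the impossibility of returning to the trivial line below $\kappa_1$, and the Rabinowitz alternative, which forces the subcritically born branch back to $\kappa>\kappa_1$. Your compactness argument (the minimum of $\kappa$ on $\mathcal{C}\cap\{\kappa\le\kappa_1\}$, plus the implicit function theorem and maximality of $\mathcal{C}$ to show the linearization is singular there) makes rigorous the paper's closing ``by continuity'' step, and citing Lemma~\ref{thm:ZeroOnly} for the lower barrier fixes the paper's reference to Theorem~\ref{thm:global_branches} at that point.
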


\begin{figure}[t]
    \centering
\includegraphics[width=0.4\linewidth]{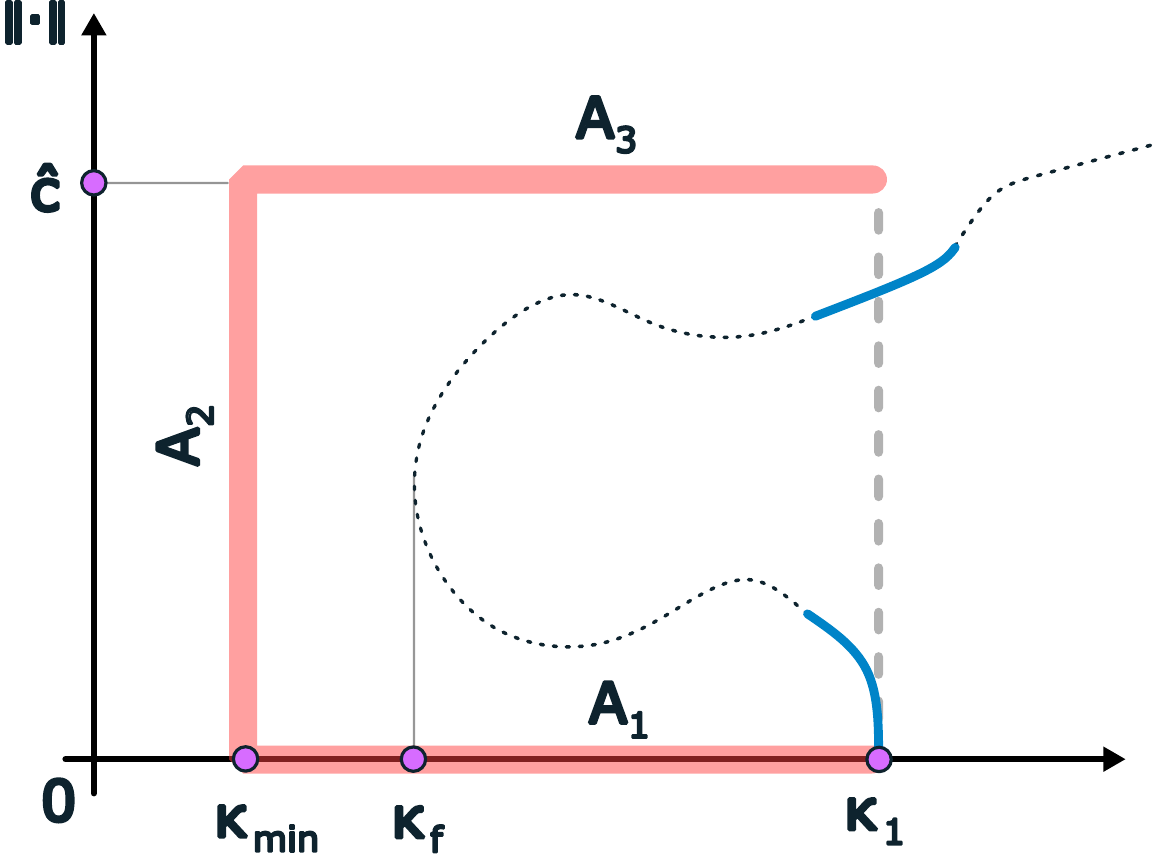}
    \caption{}
    \label{fig:branch_leaves_to_right_side}
    \end{figure}

\begin{proof}
We analyze the possible behavior of $\Gamma_1(s)$. By Theorem~\ref{thm:global_branches}, the maximal connected component containing $\Gamma_1(s)$ either extends to infinity in $W^{1,2}(\mathbb{T}) \times \mathbb{R}$ or returns to the trivial branch at some $\kappa \neq \kappa_1$. Multiplying equation~\eqref{equ_varphi_Stat} by $U$ and integrating yields
\begin{equation}
    D\int_{\mathbb{T}} |U_x|^2\,dx + \int_{\mathbb{T}} U^2\,dx = \kappa\frac{\int_{\mathbb{T}} e^U U\,dx}{\int_{\mathbb{T}} e^U\,\dy} \leq \kappa\|U\|_{L^{\infty}(\mathbb{T})}.
\end{equation}
Therefore, by Sobolev embedding 
$\|U\|_{W^{1,2}}^2 \leq C\kappa\|U\|_{W^{1,2}}$, and hence all stationary solutions are uniformly bounded in $W^{1,2}$ for $\kappa \in[0, K]$. In particular, the branch $\Gamma_1(s)$ can not cross the horizontal line $A_3$ in Figure~\ref{fig:branch_leaves_to_right_side}.

Since $\kappa_1$ is the smallest bifurcation point, the branch cannot intersect the trivial branch for $\kappa \in (0, \kappa_1)$; this corresponds to the horizontal line $A_1$ in Figure~\ref{fig:branch_leaves_to_right_side}. Furthermore, Theorem~\ref{thm:global_branches} guarantees the uniqueness of the constant solution for $\kappa < \kappa_{\min}$, which forms the boundary $A_2$.

Since the branch emerges from $(\bar{\Phi}_1, \kappa_1)$ with $\kappa_1'(0) = 0$ and $\kappa_1''(0) < 0$ (subcritical bifurcation), and remains confined within the bounded rectangle enclosed by $A_1$, $A_2$, and $A_3$, Theorem~\ref{thm:global_branches} implies that the branch must exit this region. The only possible exit is through right boundary (gray dashed line).

By continuity there exists $\hat{s} \in \mathbb{R}$ which is a turning point of the branch $\Gamma_1(s)$.
\end{proof}

\begin{remark}
We cannot analytically prove how the branch in $[\kappa_{\min}, \kappa_1]$ behaves in detail.
Numerical simulations suggest that we always have the simplest case of one fold bifurcation at $\kappa_f$.
At the fold bifurcation $\kappa_f$ the unstable eigenvalue from the subcritical pitchfork bifurcation could get stable or one of the stable eigenvalues could loose stability.
In the first case, the branch would be stable after the fold bifurcation and we would have the existence of stable nontrivial steady states for some $\kappa \in(\kappa_f,\kappa_0)$ and, thus, bistability as the constant steady state is also stable.
This would not be necessarily true in the second case.
However, in numerical analysis, we always observe a single fold bifurcation $\kappa_f$ in $(0, \kappa_1)$ at which the branch $\Gamma(s)$ gets stable. Hence, we have bistability in the whole interval $(\kappa_f, \kappa_1)$.
\end{remark}

\bibliographystyle{abbrv}
\bibliography{sample}

\appendix

\newpage

\section{Model derivation}
\label{appsec:Deriv}

\begin{figure}[b]
	\centering
	\includegraphics[width=0.6\linewidth]{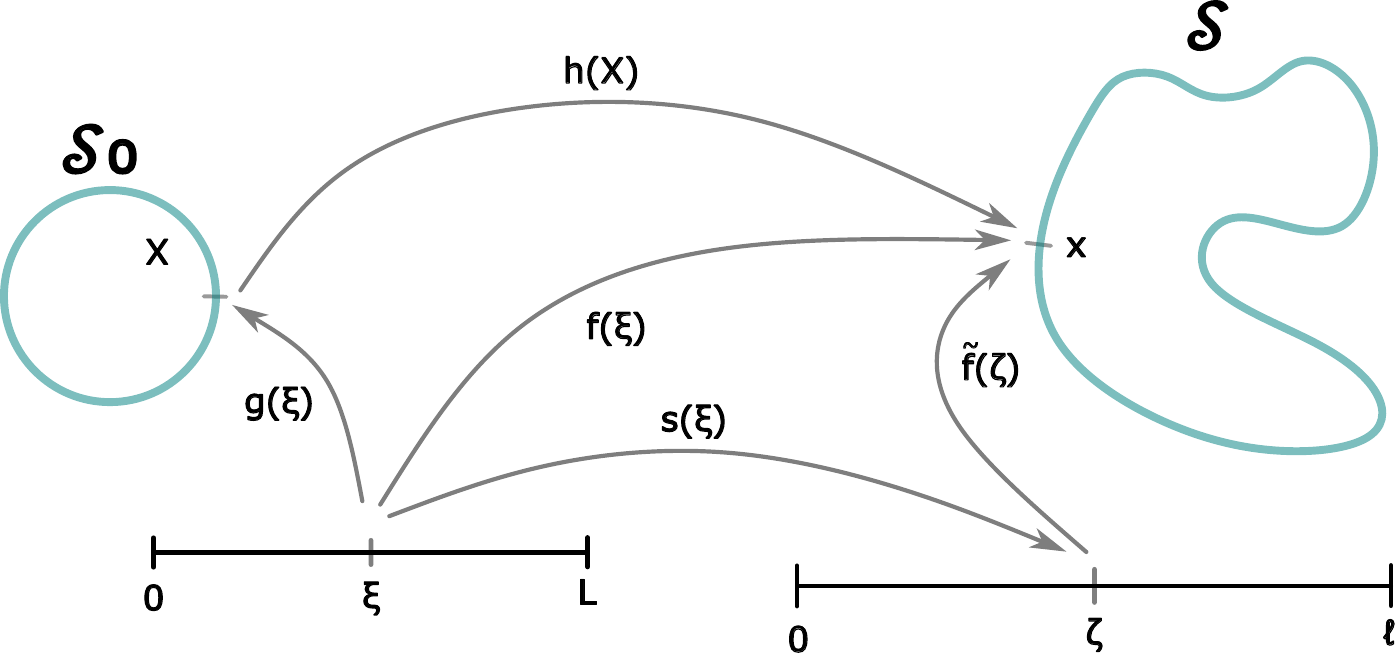}
	\caption{Schematic representation of the elastic deformation framework for the {\it Hydra} spheroid model. Left: Reference configuration $\mathcal{S}_0$ parametrized by $\mathbf{g}:[0,L] \to \mathcal{S}_0$.  Right: Deformed configuration $\mathcal{S}$ showing the composition of mappings $\mathbf{f} =\mathbf{h} \circ \mathbf{g}$ and the arc-length reparametrization $\mathbf{\tilde f}$ with $\mathbf{f} = \mathbf{\tilde f} \circ s$. }
	\label{fig:sketch2d_framework}
\end{figure}

\subsection{Hyper-elastic tissue model}
We demonstrate the derivation of the mechanochemical patterning model for \emph{Hydra} spheroids introduced in~\cite{Weevers2025} in two spatial dimensions. It retains the essential coupling between mechanics and chemistry while simplifying the geometric structure. In this setting, the \emph{Hydra} spheroid is represented as an elastic closed curve, providing a minimal framework to analyze the core mechanochemical feedback mechanisms, see Figure~\ref{fig:sketch2d_framework}.

The initial, stress-free reference configuration, denoted by $\mathcal S_0$, is assumed to be a circle of fixed circumference $L>0$, parametrized by
\begin{AlignedEquation}
	\label{eq:reference_curve}
	\mathbf{g}(\xi) = \frac{L}{2 \pi} 
	\begin{pmatrix}
		\cos\frac{2 \pi}{L} \xi\\
		\sin\frac{2\pi}{L}\xi
	\end{pmatrix}, \quad \xi \in [0,L].
\end{AlignedEquation}
This reference curve is subjected to a spatial deformation $\mathbf{h}:\mathcal S_0 \to \mathcal S$, resulting in a new closed curve $\mathcal S$ with circumference $\ell>0$. We prescribe a fixed enclosed area $A_0>\operatorname{Area}(\mathcal S_0)$ and restrict attention to deformations that satisfy $\operatorname{Area}(\mathcal S) = A_0$. The deformed curve is parametrized by the composition
\begin{AlignedEquation}
	\label{eq:deformed_curve}
	\mathbf{f}:[0, L]\to \mathcal S, \quad \mathbf{f} = \mathbf{h} \circ \mathbf{g}.
\end{AlignedEquation}

The deformation gradient in surface coordinates is given by the matrix
\begin{AlignedEquation}
	\label{eq:deformation_gradient}
	\mathbf{F} = \nabla_\mathbf{X} \mathbf{h}(\mathbf{X}) = \mathbf{f}'(\xi) \otimes \nabla \big(\mathbf{g}^{-1}(\mathbf{X})\big), \quad \mathbf{X} = \mathbf{g}(\xi) \; .
\end{AlignedEquation}
Taking a derivative of the identity $\mathbf{g}^{-1}(\mathbf{g}(\xi))=\xi$ and using $|\mathbf{g}'(\xi)| \equiv 1$ implies that  $\nabla(\mathbf{g}^{-1}(\mathbf{X})) = \mathbf{g}'(\xi)^T$, where $\mathbf{g}'(\xi)$ (the covariant basis vector on $\mathcal{S}_0$) is a column vector and the contravariant basis vector  $\mathbf{g}'(\xi)^T$ is a row vector.
We therefore can write 
\begin{AlignedEquation}
	\label{eq:deformation_gradient2}
	\mathbf{F} = \mathbf{f}'(\xi)  \otimes \mathbf{g}'(\xi)^T.
\end{AlignedEquation}
The \textit{Right Cauchy-Green} deformation tensor reads
\begin{AlignedEquation}
	\label{eq:cg_def_tensor}
	\mathbf{C} = \mathbf{F}^T \mathbf{F} =  \left(\mathbf{g}'(\xi) \otimes \mathbf{f}'(\xi)^T \right) \left( \mathbf{f}'(\xi)  \otimes \mathbf{g}'(\xi)^T\right)= 
	|\mathbf{f}'(\xi)|^2 \, \mathbf{g}'(\xi) \otimes \mathbf{g}'(\xi)^T\; ,
\end{AlignedEquation}
and the Green-Lagrangian strain is
\begin{AlignedEquation}
	\label{eq:StrainFormula}
	\boldsymbol{\varepsilon} = \frac12(\mathbf{C}-\mathbf{I}) = \frac12\left(|\mathbf{f}'(\xi)|^2 - 1 \right) \mathbf{g}'(\xi) \otimes  \mathbf{g}'(\xi)^T,
\end{AlignedEquation}
where the identity in the tangent space of $S_0$ is given by $\mathbf{I}=\mathbf{g}'(\xi) \otimes \mathbf{g}'(\xi)^T$. 

We model the  tissue as a hyperelastic material governed by the Saint Venant-Kirchhoff constitutive law with strain energy density
\begin{AlignedEquation}
	W(\boldsymbol{\varepsilon}) = \frac{\lambda}{2} {\rm tr}^2(\boldsymbol{\varepsilon}) + \frac{\mu}{2}{\rm tr}(\boldsymbol{\varepsilon}^2),
\end{AlignedEquation}
where the Lamé parameters satisfy $\lambda = 0$ and $\mu = E(u)$, with $u:=u(\xi)$ the morphogen concentration and $E(u)$ the morphogen-dependent elastic modulus encoding the mechanochemical coupling central to our model. The specific choice of parameters follows from the modelling considerations in \cite{Weevers2025}.
The total elastic energy of a deformation is given by
\begin{AlignedEquation}
	\mathcal E_{\text{el}} = \int_{S_0} W(\varepsilon) \, dS_0 = \frac{1}{8}\int_0^L{E(u)} \left(|\mathbf{f}'(\xi)|^2 - 1 \right)^2 |\mathbf{g}'(\xi)| d\xi,
\end{AlignedEquation}
where we used equation \eqref{eq:StrainFormula}.

Assuming quasi-static mechanics due to fast tissue relaxation, the elastic energy is minimized
under the area constraint
\begin{AlignedEquation}
	A_0=A(f) = \frac12 \int_S \mathbf{x} \cdot \mathbf{n} \, dS = \frac12 \int_0^L \mathbf{f}(\xi)  \cdot \mathbf{n}(\mathbf{f}(\xi)) \, |\mathbf{f}'| \ {d} \xi = -\frac12 \int_0^L \mathbf{f}(\xi)  \cdot  \mathbf{f}'(\xi)^\perp \ {d} \xi \; , 
\end{AlignedEquation}
where $\mathbf{n}(\mathbf{x})=-\mathbf{f}'(\mathbf{x})^\perp/|\mathbf{f}'(\mathbf{x})|$ is the outer unit normal vector on $\mathcal{S}$ at $\mathbf{x} \in \mathcal{S}$ and ${}^\perp$ represents the rotation of a 2-dimensional vector by $\pi/2$: $(a,b)^\perp=(-b,a)$.

To find minimizers of $\mathcal{E}_{\text{el}}$ we introduce Lagrange multipliers. The Gateaux derivative with respect to variations $\delta \mathbf{f}(\xi)$ with periodic boundary conditions yields
\begin{AlignedEquation}
	\label{eq:EnergyGateuaxDerivative}
	0&=D \mathcal E_{\text{el}}(\mathbf{f}) (\delta \mathbf{f}) + p D A(f)(\delta f)\\
	%&=\frac{1}{2}\int_0^L E(u) \left(\frac{|\mathbf{f}'(\xi)|^2}{|\mathbf{g}'(\xi)|^2} - 1 \right) \frac{\mathbf{f}'(\xi) \cdot \delta \mathbf{f}'(\xi)}{|\mathbf{g}'(\xi)|^2}   |\mathbf{g}'(\xi)| d\xi- p \frac12 \int_0^L \left(\delta \mathbf{f}(\xi)  \cdot \mathbf{f}'(\xi)^\perp + \mathbf{f}(\xi)  \cdot \delta \mathbf{f}'(\xi)^\perp \right) d\xi\\
	&=\frac{1}{2}\int_0^L E(u) \left(|\mathbf{f}'(\xi)|^2 - 1 \right) \mathbf{f}'(\xi) \cdot \delta \mathbf{f}'(\xi)   d\xi- p  \int_0^L \delta \mathbf{f}(\xi)  \cdot \mathbf{f}'(\xi)^\perp  \, d\xi \; ,
\end{AlignedEquation}
where $p \in \mathbb{R}$, the inner pressure, acts as a Lagrange multiplier for the area constraint. Integrating by parts, we obtain that equation \eqref{eq:EnergyGateuaxDerivative} is satisfied if and only if the minimizer $f$ satisfies
\begin{AlignedEquation}\label{eq:MinimizerE_el}
	   \left(\sigma(\xi) \frac{\mathbf{f}'(\xi)}{|\mathbf{f}'(\xi)|} \right)'  + p \,  \mathbf{f}'(\xi)^\perp =0 \; ,
\end{AlignedEquation}
where the stress is given by
\begin{AlignedEquation}\label{eq:stressS}
	\sigma(\xi)=E(u) \frac{1}{2} \left(|\mathbf{f}'(\xi)|^2 - 1 \right) |\mathbf{f}'(\xi)| .
\end{AlignedEquation}	
Evaluating the derivative we find that 
\begin{AlignedEquation}\label{eq:MinimizerE_el2}
	\sigma'(\xi) \frac{\mathbf{f}'(\xi)}{|\mathbf{f}'(\xi)|}  +\sigma(\xi) \left( \frac{\mathbf{f}'(\xi)}{|\mathbf{f}'(\xi)|} \right)' + p \,  \mathbf{f}'(\xi)^\perp =0 \; .
\end{AlignedEquation}

Taking the scalar product of this with $\mathbf{f}'(\xi)$ and using that $\left( \frac{\mathbf{f}'(\xi)}{|\mathbf{f}'(\xi)|} \right)' \cdot \mathbf{f}'(\xi)=0$ as well as the impenetrability assumption $|\mathbf{f}'(\xi)|>0$ we obtain that $\sigma' \equiv 0$, i.e. the stress is constant. Taking the scalar product of \eqref{eq:MinimizerE_el2} with $\mathbf{f}'(\xi)^\perp$ we obtain that $\sigma \left( \frac{\mathbf{f}'(\xi)}{|\mathbf{f}'(\xi)|} \right)' \cdot \mathbf{f}'(\xi)^\perp+ p \, |\mathbf{f}'(\xi)|^2 =0$, i.e. the curvature $\kappa=\frac{\mathbf{f}''(\xi)}{|\mathbf{f}'(\xi)|^3} \cdot \mathbf{f}'(\xi)^\perp=-p/\sigma$ (Laplace's Law) is constant along $\mathcal{S}$.
Hence, the minimizer $\mathcal{S}$ must be circular with circumference $\ell > L$.

To separate geometric shape from tangential stretching, we introduce an arc-length parametrization:
\begin{AlignedEquation}
	\label{eq:arc_length_param}
	\mathbf{f}(\xi) = \mathbf{\tilde f}(s(\xi)),
\end{AlignedEquation}
where $\mathbf{\tilde f}:[0,\ell]\to\mathcal S$ satisfies $|\mathbf{\tilde f}'|=1$, and $s:[0,L]\to[0,\ell]$ with $s'>0$ encodes the tangential deformation. Since $s'(\xi)=|f'(\xi)|$ represents the local stretch ratio (ratio of deformed to reference arc length $|g'({\xi})| \equiv 1$), the quantity
\begin{equation}
	\label{eq:1D_strain}
	\tilde{\varepsilon} = \frac{1}{2}((s')^2 - 1)
\end{equation}
is the finite Green-Lagrangian strain in one dimension. 

Conservation of total length imposes
\begin{AlignedEquation}
	\label{eq:length_conservation}
	\int_0^L (s'(\xi)-1)\, d\xi = \ell - L.
\end{AlignedEquation}
The morphogen concentration $u(t,\xi)$ evolves according to
\begin{AlignedEquation}
	\label{eq:morphogen}
	\partial_t u = \kappa \tilde\varepsilon - \alpha u + D \partial_{\xi\xi} u,
\end{AlignedEquation}
with production proportional to local strain, linear degradation rate $\alpha>0$, and diffusion coefficient $D>0$. Taking together, the full mechanochemical system reads
\begin{equation}
	\label{eq:full_system}
	\left\{
	\begin{aligned}
		&\begin{aligned}
			\partial_t u &= \kappa \tilde\varepsilon - \alpha u + D \partial_{\xi\xi} u,\\
			0 &= \partial_\xi \big(E(u) (s'^2-1) s'\big),
		\end{aligned} && \xi \in [0,L],\ t\ge0,\\
		& \ell - L = \int_0^L (s'-1)\, d\xi, && t\ge0,
	\end{aligned}    
	\right.
\end{equation}
with periodic boundary conditions and appropriate initial data. The 1D finite strain is defined by \eqref{eq:1D_strain}, completing the mechanochemical coupling.

\subsection{Inifinitesimal strain}
\label{subsec:Infinit}
To facilitate analytical treatment of the mechanochemical model, we invoke the infinitesimal strain approximation. This is justified biologically, as \emph{Hydra} cells experience only small-amplitude distortion during mechanochemical signalling, so that the local deformation gradients remain close to the identity \cite{Weevers2025}. Under the assumption that 
\(
|s' - 1| \ll 1,
\) 
the finite strain 
\(
\tilde{\varepsilon} = \tfrac12(s'^2 - 1)
\) 
can be replaced by its linearized counterpart
\begin{AlignedEquation}
	\hat\varepsilon(t,\xi) := s'(t,\xi) - 1 \approx \tilde\varepsilon(t,\xi),
\end{AlignedEquation}
reducing the nonlinear system to a tractable linear framework while retaining the essential mechanochemical coupling.

Following the minimization procedure, the mechanical equilibrium condition reduces under the infinitesimal strain approximation to
\begin{AlignedEquation}
	0 = \frac{1}{2} \partial_\xi \Big(E(u) \, (s'^2 - 1)s'\Big)
	= \frac{1}{2} \partial_\xi \Big(E(u) \, \big(2 (s'-1) + (s'-1)^2\big) s'\Big)
	\approx \partial_\xi \big(E(u) \, \hat\varepsilon\big),
\end{AlignedEquation}
where $\hat\varepsilon := s'-1$ is the linearized strain.  
Consequently, the full mechanochemical system in the infinitesimal strain approximation, with $\varepsilon := \hat\varepsilon(\xi,t)$, reads
\begin{equation} \label{app:sys_eps}
	\left\{
	\begin{aligned}
		&\begin{aligned}        
			\partial_t u &= \kappa \, \varepsilon - \alpha u + D \, \partial_{\xi\xi} u,\\
			0 &= \partial_\xi \big(E(u) \, \varepsilon\big),
		\end{aligned} 
		&& \xi \in [0,L], \quad t \ge 0,\\
		& l - L = \int_0^L \varepsilon \, d\xi, && t \ge 0.
	\end{aligned}
	\right.    
\end{equation}

\section{Well-posedness of the model}
We provide here a proof of existence and uniqueness of a global-in-time solution to problem \eqref{equ_varphi}. A similar approach can be found in \cite{miyasita2007dynamical}; we include the details here for the sake of completeness.

\begin{proof}[Proof of proposition \ref{prop:ExistenceGlobal}]
    We begin by establishing the existence of a local-in-time solution. Let $u_0\in W^{1,2}(\T)$ and consider the mild formulation of problem \eqref{equ_varphi}:
    \begin{AlignedEquation}
        u(t,\cdot) = e^{D\Delta t} u_0 + \int_0^t e^{D\Delta(t-s)}\left( \kappa\frac{e^{u(s,\cdot )}}{\intT e^{u(s,y)}\dy} - u(s,\cdot) \right) ds \; .
    \end{AlignedEquation}
    Here, $e^{D\Delta t}$ is a semigroup generated on $L^2(\mathbb{T})$ by the second derivative $D\Delta$ with domain $D(D\Delta) = W^{2,2}(\mathbb{T})$. Let $X = {C\big([0,\tau];W^{1,2}(\mathbb{T})\big)}$ be the Banach space with the norm $\|v\|_X = \sup\limits_{0\leqslant t\leqslant \tau} \|v(\cdot, t)\|_{W^{1,2}}$ where $\tau$ is a positive constant to be determined further below. We define the map $F:X\to X$ by
    \begin{AlignedEquation}
        F(v) := e^{D\Delta t} u_0 + \int_0^t e^{D\Delta(t-s)}\left( \kappa\frac{e^{v(s, \cdot)}}{\intT e^{v(s,y)}dy} - v(s, \cdot) \right) ds.
    \end{AlignedEquation}
    We further introduce the ball 
    \begin{AlignedEquation}
        V = \left\lbrace v \in X : \|v\|_X\leqslant 3\delta\right\rbrace \quad \text{where} \quad \delta = \|u_0\|_{W^{1,2}}.  
    \end{AlignedEquation} 
    Notice that by Sobolev embedding, any $v \in V$ satisfies $\|v(t,\cdot)\|_{L^\infty} \leqslant C \delta$ for a constant $C>0$. 
    It is well known that the semigroup $e^{D\Delta t}$ satisfies  the estimates
    \begin{AlignedEquation}
        \|e^{D\Delta t} v\|_{L^2} \leqslant C \|v\|_{L^2} \quad \text{and} \quad \|\nabla e^{D\Delta t} v\|_{L^2} \leqslant C t^{-\frac{1}{2}} \|v\|_{L^2} \quad \text{for } v\in L^2(\T).  
    \end{AlignedEquation}
    These estimates follow from the explicit representation of $e^{D\Delta t}$ as the kernel is given by $\mathbb Z$-periodization of the heat kernel satisfying the contractivity estimates \cite{rauch2012partial}. Consequently, for $0\leqslant t\leqslant \tau$, we derive the estimate 
    \begin{AlignedEquation}
        \|F(v)\|_{W^{1,2}} &\leqslant \| e^{D\Delta t}u_0 \|_{W^{1,2}} + C\int_0^t \left\| e^{D\Delta(t-s)}\left( \kappa\frac{e^{v(s,\cdot)}}{\intT e^{v(s,y)}dy}-v(s,\cdot)\right) \right\|_{W^{1,2}} ds \\
        &\leqslant \| e^{D\Delta t}u_0 \|_{W^{1,2}} + C\int_0^t \left(1+(t-s)^{-\frac{1}{2}}\right)  \left(\kappa \frac{\|e^{v(s,\cdot)}\|_{L^2}}{\|e^{v(s,\cdot)}\|_{L^1}} + \|v(s,\cdot)\|_{L^2}\right) ds.
    \end{AlignedEquation}
    Since $v$ is uniformly bounded, we have $\|e^v\|_{L^2} \leqslant e^{\|v\|_{L^\infty}}$ and $\|e^v\|_{L^1} \geqslant e^{-\|v\|_{L^\infty}}$. Thus, we obtain
    \begin{AlignedEquation}
        \|F(v)\|_{W^{1,2}}&\leqslant \| e^{D\Delta t}u_0 \|_{W^{1,2}} + C\int_0^t \left(1+(t-s)^{-\frac{1}{2}}\right)  \left(\kappa\frac{e^{\|v(s,\cdot)\|_{L^\infty}}}{e^{-\|v(s,\cdot)\|_{L^\infty}}} + \|v(s,\cdot)\|_2\right) ds \\
        &\leqslant \| e^{D\Delta t}u_0 \|_{W^{1,2}} + C(\kappa e^{2C\delta}+3\delta)\left(t+\sqrt{t}\right)
    \end{AlignedEquation}
    using $v\in V$ and that the semigroup $e^{D\Delta t}$ is bounded on $W^{1,2}(\T)$.
    By choosing $\tau>0$ sufficiently small, we obtain the estimate $\|F(v)\|_X \leqslant 3\delta$.
    
    Next we show that $F$ is a contraction mapping on the ball $V$. We have  
    \begin{AlignedEquation}
        \|F(v) &- F(w)\|_{W^{1,2}} \leqslant \int_0^t \left(1+(t-s)^{-\frac{1}{2}}\right) \left\| \kappa\frac{e^v}{\intT e^v dy} - \kappa\frac{e^w}{\intT e^w dy} - v + w \right\|_{L^2} ds.
    \end{AlignedEquation}
    Observing that ${e^v}/{\int e^v} - {e^w}/{\int e^w} = (e^v - e^w)/\int e^v + e^w(\int e^w - \int e^v)/(\int e^v \int e^w)$ and letting $C(t) = t+ \sqrt{t}$, we obtain 
    \begin{AlignedEquation}
        \|F(v) - F(w)\|_{W^{1,2}}&\leqslant C(t) \|v - w\|_X + C(t)\kappa\sup_{t\in[0,\tau]}\left\|\kappa\frac{e^v - e^w}{\intT e^v dy} + \kappa e^w\frac{\intT e^w dy - \intT e^v dy}{\intT e^v dy \intT e^w dy} \right\|_{L^2}.
    \end{AlignedEquation}
    Using $L^\infty$-estimates for $v$ and $w$ we derive \begin{AlignedEquation}
        e^w\left|\intT (e^w - e^v) dy\right|\big/\left(\intT e^v dy \intT e^w dy\right) \leqslant e^{3C\delta} \intT |e^v - e^w| dy
    \end{AlignedEquation}
    and 
    \begin{AlignedEquation}
        |e^v - e^w|/\intT e^v dy \leqslant e^{C\delta}|e^v - e^w| \; .
    \end{AlignedEquation}
    Thus, we obtain 
    \begin{AlignedEquation}
        \|F(v) - F(w)\|_{W^{1,2}} & \leqslant C(t) \|v - w\|_X + C(t)\kappa e^{C\delta}\sup_{t\in[0,\tau]}\left\|{e^v - e^w}\right\|_{L^2} + C(t)\kappa e^{3C\delta}\sup_{t\in[0,\tau]}\left\|{ e^v -  e^w} \right\|_{L^\infty}.
    \end{AlignedEquation}
    By the Sobolev embedding and mean value argument, we conclude
    \begin{AlignedEquation}
        \|F(v) - F(w)\|_{W^{1,2}}\leqslant C(t) \|v - w\|_X + C(t)\sup_{t\in[0,\tau]}\left\|{v - w}\right\|_{W^{1,2}} \leqslant C(\tau) \|v - w\|_X.
    \end{AlignedEquation}
    By choosing $\tau$ sufficiently small, the proof is completed by the Banach fixed point argument as $C(\tau) \downarrow 0$ as $\tau\downarrow0$. 

    Next, we demonstrate that the solution can be extended globally in time. The Lyapunov function defined in \eqref{eq:FunDef} satisfies
    \begin{AlignedEquation}
        \frac{d}{dt} \mathcal{J}\big(u(\cdot, t)\big) = D\mathcal{J}\big(u(\cdot, t)\big) u_t(\cdot,t) = -\|u_t(\cdot,t)\|_{L^2}^2\leqslant 0 \; , 
    \end{AlignedEquation}
    which implies that $\mathcal{J}\big(u(\cdot, t)\big) \leqslant \mathcal{J}\big(u_0\big)$. Using Sobolev embedding, we obtain
    \begin{AlignedEquation}
        \frac{D}{2}\intT\big|u_x(x,t)\big|^2 dx + \frac{1}{2} \intT \big|u(x,t)\big|^2 dx &\leqslant \mathcal{J}\big(u(\cdot,t)\big) + \kappa \log\left(\intT e^{u(x,t)} dx\right)
        \\&\leqslant \mathcal{J}\big(u_0\big) + \kappa C \|u(\cdot,t)\|_{W^{1,2}} \; .
    \end{AlignedEquation}
    This yields the inequality $\|u(\cdot,t)\|^2_{W^{1,2}}\leqslant C + \|u(\cdot,t)\|_{W^{1,2}}$ which implies $\|u(\cdot,t)\|_{W^{1,2}} <C\big(u_0\big)$ for all $t\geqslant \tau$. Similarly, we derive
    \begin{AlignedEquation}
        \int_0^t \|u_t(\cdot,t)\|_{L^2}^2  &= J(u_0) - J\big(u(\cdot,t)\big) \leqslant J(u_0) + C\|u(\cdot,t)\|_{W{1,2}} + \kappa C \|u(\cdot,t)\|_{L^\infty} \leqslant C \; ,
    \end{AlignedEquation}
    where the bound is uniform with respect to $t$. Furthermore, we have
    \begin{AlignedEquation}
        \int_0^t \|\Delta u(\cdot,t)\|_{L^2}^2 &= \int_0^t \left\|u_t(\cdot,t) + u(\cdot,t) + \kappa \frac{e^{u(\cdot,t)}}{\intT e^{u(y,t)} \dy}  \right\|_{L^2}^2  \\
        &\leqslant \int_0^t \left\|u_t(\cdot,t)\right\|_{L^2}^2 + \int_0^t \left\|u(\cdot,t)\right\|_{L^2}^2 + \int_0^t \left\|\kappa \frac{e^{u(\cdot,t)}}{\intT e^{u(y,t)} \dy} \right\|_{L^2}^2 \leqslant C + Ct.
    \end{AlignedEquation}
    These estimates ensures that the solution exists for all time $t\geqslant 0$ and satisfies
    \begin{AlignedEquation}
        u \in C\big([0,\infty), W^{1,2}(\T) \big), \qquad u\in L^2\big([0,\infty), L^2(\T)\big), \qquad u \in L^2\big((0,T), W^{2,2}(\T) \big),
    \end{AlignedEquation}
    for each $T>0$. To establish uniqueness, we employ the theory of dynamical systems. Since the solutions are bounded, it follows from \cite[Thm 3.4.1]{henry2006geometric} that the dependence of solutions of initial conditions is continuous, thereby defining dynamical system. Consequently the solutions are unique. 
\end{proof}

%\begin{proposition}
%    \label{prop:Nonnegativity}
 %   The solution $u(x,t)$ is nonnegative for nonnegative initial data $u_0(x)$.
%\end{proposition}

%\begin{proof}
%    Since $u$ satisfies
 %   \[
  %  u_t - D u_{xx} + u = \kappa \frac{e^u}{\int_0^1 e^{u(y,t)} dy} \geqslant 0,
   % \]
    %for nonegative initial conditions, the comparison principle for parabolic problems (see e.g., \cite{evans2022partial}) implies $u(x,t) \geqslant 0$ for all $(x,t) \in \mathbb{T} \times [0,\infty)$.
%\end{proof}

%\begin{proposition}
%    \label{thm:SolSym}
%    The solution $U(x)$ to equation \eqref{equ_varphi_Stat} is symmetric, namely $U(x) = U(1 - x)$.
%\end{proposition}

%\begin{proof}
 %   Let $\sigma = {\kappa}/{\intT e^{U}}$. Then $U(x)$ satisfies the second-order energy conservation equation with the potential $H({U}) = -{U^2}/{2} + \sigma e^{{U}}$. The orbits
 %   \begin{AlignedEquation}
        %\frac{U_x^2}{2} -\frac{U^2}{2} + \sigma e^{{U}} = E 
   % \end{AlignedEquation}
 %   for certain energy level are symmetric and hence by periodicity of boundary conditions, solutions $U(x)$ are also symmetric. 
%\end{proof}

\section{Computations of the derivatives}
\label{sec:deriv_G}
Define the mapping $G:X\times \R \to Y$ by
\begin{align*}
    G(\chi, \alpha) := D\chi_{xx} - \chi + (\kappa_n + \alpha) \left( \frac{e^\chi}{\int e^\chi d y} - 1 \right),
\end{align*}
where we use the notation of Section \ref{sec:Bifurcation_analysis}.
The derivatives of $G$ with respect to $\chi \in X$ and $\alpha \in \mathbb{R}$ are:
\begin{AlignedEquation}
    G_\chi(\chi, \alpha)(h) &= D h_{xx} - h + (\kappa_n + \alpha) \left[ \frac{e^\chi}{\int e^\chi d y} h - \frac{e^\chi}{(\int e^\chi dy )^2} \int e^\chi h dy \right],\\
    G_{\chi\chi}(\chi, \alpha)(h_1, h_2) &= (\kappa_n + \alpha) \left[ \frac{e^\chi}{\int e^\chi d y} h_1 h_2 + 2 \frac{e^\chi}{(\int e^\chi dy)^3} \int e^\chi h_1 dy \int e^\chi h_2 dy\right.\\
    &\quad \left.- \frac{e^\chi}{(\int e^\chi dy)^2} \left(h_1 \int e^\chi h_2 d y + h_2 \int e^\chi h_1 dy + \int e^\chi h_1 h_2 d y\right) \right],\\
    G_{\chi\chi\chi}(\chi, \alpha)(h,h,h) &= (\kappa_n + \alpha) \left[ \frac{e^\chi}{\int e^\chi dy} h^3 - \frac{e^\chi}{(\int e^\chi dy)^2} \left(3 h^2 \int e^\chi h dy + 3 h\int e^\chi h^2 dy + \int e^\chi h^3 d y\right)\right.\\
    &\quad+ \frac{e^\chi}{(\int e^\chi dy)^3} \left(  8 \int e^\chi h dy \int e^\chi h^2 dy + 6 h \left(\int e^\chi h dy\right)^2 + 2 \int e^\chi h dy \int e^\chi h^2 d y \right)\\
    &\quad\left. - 6 \frac{e^\chi}{\int e^\chi dy} (\int e^\chi h dy)^3 \right],\\
    G_\alpha(\chi, \alpha) &= \left(\frac{e^\chi}{\int e^\chi dy} - 1\right) ,\\
    G_{\chi\alpha}(\chi, \alpha) (h) &= \frac{e^\chi}{\int e^\chi dy} h - \frac{e^\chi}{(\int e^\chi dy)^2} \int e^\chi h dy
\end{AlignedEquation}
for $h, h_1, h_2 \in X$ and $\beta \in \R$.

\end{document}